\documentclass{amsart}
\usepackage{cleveref,mathrsfs}
\usepackage{color}
\usepackage{amssymb}

\title[Graded identities for $M_2(\mathbb{F})$]{Graded identities for matrix algebras of order two over a finite field}

\author[]{Diogo Diniz}
\thanks{D.~Diniz was partially supported by Conselho Nacional de Desenvolvimento Cient\'ifico e Tecnol\'ogico (CNPq) grant No.~304328/2022-7.}
\address{Unidade Acad\^emica de Matem\'atica, Universidade Federal de Campina Grande, Campina Grande, PB, 58429-970, Brazil}
\email{diogo@mat.ufcg.edu.br}

\author[]{Eduardo Pinto da Fonsêca}
\thanks{E.~P.~da~Fonsêca was partially supported by CAPES, Finance Code 001}
\address{Unidade Acad\^emica de Matem\'atica, Universidade Federal de Campina Grande, Campina Grande, PB, 58429-970, Brazil}
\email{pintokimura@gmail.com}

\author[]{Luis Filipe Ramos}
\thanks{L.~F.~Ramos was partially supported by CAPES, Finance Code 001}
\address{Unidade Acad\^emica de Matem\'atica, Universidade Federal de Campina Grande, Campina Grande, PB, 58429-970, Brazil}
\email{luis\underline{ }filipecg@hotmail.com}

\newtheorem{Theorem}{Theorem}
\newtheorem{lemma}[Theorem]{Lemma}
\newtheorem{proposition}[Theorem]{Proposition}
\newtheorem{cor}[Theorem]{Corollary}
\theoremstyle{definition}
\newtheorem{definition}[Theorem]{Definition}

\newtheorem{notation}{Notation}

\newtheorem{remark}[Theorem]{Remark}

\subjclass[2020]{16W50, 16R10}

\keywords{Gradings on matrix algebras, Graded polynomial identities, Finite basis property}

\begin{document}
\begin{abstract}
Let $G$ be an arbitrary group and let $\mathbb{F}$ be a finite field. In this paper, we determine bases for the $T_G$-ideals of graded polynomial identities of the algebra $M_2(\mathbb{F})$ for all possible $G$-gradings. The bases obtained consist of finitely many non-trivial graded identities, and are finite whenever $G$ is finite.
\end{abstract}
\maketitle

\section{Introduction}

A fundamental development in the theory of PI-algebras is A. Kemer's solution to the Specht Problem for algebras over a field of characteristic zero. Given a concrete algebra, however, it is a very difficult problem to determine a finite basis for its polynomial identities. Let $M_2(\mathbb{K})$ be the matrix algebra of order two over the field $\mathbb{K}$. When $\mathbb{K}$ has characteristic zero Yu. P. Razmyslov provided a finite basis for the identities of $M_2(\mathbb{K})$ in \cite{R} and V. Drensky provided a minimal basis, consisting of two identities, in \cite{D}. Over an infinite field of characteristic different from two, analogous results were obtained by P. Koshlukov in \cite{Ko}. A basis for the identities of the matrix algebras of order two over a finite field was determined by Yu. N. Mal'tsev and E. N. Kuz'min in \cite{MK}.

An important concept in Kemer's theory of $T$-ideals in characteristic zero is the notion of graded polynomial identities for graded algebras. It is natural to extend Specht's Problem to the context of graded algebras. In the case of algebras over fields of characteristic zero a positive answer has been given by E. Aljadeff and A. Kanel-Belov in \cite{AKB} and by I. Sviridova in \cite{Sv}. O. M. Di Vincenzo in \cite{DiVincenzo} and P. Koshlukov and S. S. Azevedo in \cite{KA2} determined bases for the graded identities of $M_2(\mathbb{K})$ in characteristic zero and when $\mathbb{K}$ is infinite, respectively. When $\mathbb{K}$ is a finite field of characteristic different from two the $\mathbb{Z}_2$-gradings on  $M_2(\mathbb{K})$ were described and finite bases for the graded polynomial identities were determined in \cite{KA}.

Gradings on matrix algebras over an algebraically closed field were classified by Yu. A. Bahturin and M. Zaicev in \cite{BZ}. Over an arbitrary field the group gradings on matrix algebras of order two were classified in \cite{KBD}. More generally, analogous results concerning matrix algebras of prime order were obtained in \cite{BDW} and \cite{DG}. In the present paper we determine bases for the graded identities for the matrix algebra of order two with an arbitrary group grading. We also remark that the bases obtained are finite when the grading group is finite, the existence of a finite basis in this case follows from the main result of \cite{BCD}.

The paper is organized as follows. In Section~\ref{prelim}, we present the basic definitions and results on graded algebras and graded polynomial identities that will be used throughout the paper. In Section~\ref{results}, we present the main results, namely, bases for the graded polynomial identities of $M_2(\mathbb{F})$, where $\mathbb{F}$ is a finite field, for all possible gradings. There are six gradings to consider, two of which were studied in \cite{KA}. In this section, we determine bases for the graded identities of the remaining gradings. 

\section{Preliminaries}\label{prelim}

Let $G$ be a group with identity element $e$. A $G$-grading on a vector space $A$ over the field $\mathbb{K}$ is a decomposition $A=\oplus_{g\in G}A_g$ of $A$ as a direct sum of the family $\{A_g\mid g\in G\}$ of subspaces. If $A$ is an algebra we further assume that $A_gA_h\subseteq A_{gh}$, for all $g,h\in G$. An element $a$ of $A$ is called homogeneous if there exists $g\in G$ such that $a\in A_g$. Note that if $a\neq 0$ then such $g$ is unique, in this case we say that $g$ is the degree of $a$. Henceforth $||a||$ denotes the degree of $a$ whenever $a$ is a non-zero homogeneous element. The support of $A=\oplus_{g\in G}A_g$ is the set $\mathrm{supp}\, A:=\{g\in G\mid A_g\neq 0\}$. As an example of a $G$-grading on $A$ we may consider the trivial grading, i.\,e., the grading where $A_e=A$ and $A_g=0$ for all $g\in G\setminus \{e\}$.

Let $A$ and $B$ be $G$-graded algebras. A homomorphism $\varphi:A \rightarrow B$ of algebras is a homomorphism of graded algebras if $\varphi(A_g)\subseteq B_g$ for all $g\in G$. Henceforth we denote by $[A]$ the isomorphism class of $A$ as a $G$-graded algebra. 

An ideal (resp.\,subalgebra) of $A$ is called a graded ideal if $I=\oplus_{g\in G}(I\cap A_g)$. We say that $A$ is a graded simple algebra if $A^2\neq 0$ and $0$ and $A$ are the only graded ideals of $A$. We call $A$ a graded division algebra if it is unital and every non-zero homogeneous element of $A$ is invertible.
Let $D$ be a graded division algebra and let $A=M_n(D)$. Given a tuple $\gamma=(g_1,\dots, g_n)$ the decomposition $A=\oplus_{g\in G} A_g$ where \[A_g:=\left( \begin{array}{ccc}
        D_{g_1^{-1}gg_1} & \cdots & D_{g_1^{-1}gg_n}\\
        \vdots & \ddots & \vdots\\
        D_{g_n^{-1}gg_1} & \cdots & D_{g_n^{-1}gg_n}\\
\end{array}\right)\]
is a $G$-grading on $A$. Note that if $d\in D_h$ and $dE_{ij}$ is the matrix with $d$ in the $(i,j)$-th position and $0$ elsewhere then $||dE_{ij}||=g_ihg_j^{-1}$. Then $A$ is a graded simple algebra, moreover every graded simple algebra that is graded left (or right) Artinian is isomorphic to an algebra of this type, see \cite[Theorem 1.58]{NV}. Henceforth we denote the algebra $M_n(D)$ with this grading by $M_n(D)(\gamma)$. In order to simplify the notation, when it is not necessary to specify $\gamma$, we use the notation $M_n(D)$ instead of $M_n(D)(\gamma)$.

A graded (left) $A$-module is an $A$-module $M=\oplus_{g\in G}M_g$ with a grading as a vector space such that $A_gM_h\subseteq M_{gh}$ for all $g,h\in G$. A submodule $N$ of $M$ is a graded submodule if it is a graded subspace. The annihilator of $M$ in $\mathcal{A}$ is the set \[\textrm{Ann}_G(M)=\{a\in A\mid ax=0\, \forall x\in M\}.\] We say that $M$ is a graded simple module if $AM\neq 0$ and $0$ and $M$ are the only graded submodules.

\begin{definition}
Let $A$ be a $G$-graded algebra. The $G$-graded Jacobson radical of $A$ is the set \[J^{gr}(A)=\bigcap \textrm{Ann}_G(M),\]
where the intersection runs over all the $G$-graded  simple right $A$-modules. We define $J^{gr}(A)=A$ if the algebra $A$ admits no graded simple modules.
\end{definition}

\begin{remark}
As in the ordinary case if $A$ is finite dimensional then $J^{gr}(A)$ is a nilpotent $G$-graded ideal. This fact will be important in the proofs of the main results.
\end{remark}

\begin{lemma}\cite[Lemma 2.3]{DGK}\label{idempotent}
	Let $A$ be a finite dimensional graded algebra with $J^{gr}(A) = 0$. Then
if $I$ is a graded ideal of $A$ then there exists a homogeneous central
idempotent $e$ such that $I=Ae$.
\end{lemma}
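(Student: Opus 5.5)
We would prove the lemma in three steps: show that $A$ is a finite direct product of graded simple algebras, identify the graded ideals of such a product, and take $e$ to be the sum of the identities of the factors contained in $I$.

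\emph{Step 1: graded ideals are complemented.} Let $I$ be a graded ideal of $A$. Choose a minimal graded ideal $K$ of $A$ with $K\subseteq I$ (it exists by finite dimensionality). Then $K^2$ is a graded ideal contained in $K$, so $K^2=0$ or $K^2=K$.

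If $K^2=0$, then $K$ is a nilpotent graded ideal. We then show $K\subseteq J^{gr}(A)$ by showing $K$ annihilates every graded simple right $A$-module $M$. Indeed, $MK$ is a graded submodule of $M$, so $MK=0$ or $MK=M$. The second option gives $M=MK=MK^2=0$, which is impossible, so $MK=0$. Since $J^{gr}(A)=0$, we get $K=0$, a contradiction. Hence $K^2=K\neq 0$.

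Next, the left annihilator $\mathrm{Ann}(K)=\{a\in A : aK=0\}$ is a graded ideal, because $K$ is graded. Consider $K\cap \mathrm{Ann}(K)$. Its square is zero, so by the same argument it lies in $J^{gr}(A)=0$. We then want to show that $A=K\oplus \mathrm{Ann}(K)$. For this we need an identity element in $K$: a graded simple algebra $K$ with $K^2\neq 0$ that is finite dimensional is, by the structure theorem quoted from \cite[Theorem 1.58]{NV}, isomorphic to some $M_n(D)(\gamma)$. In particular it is unital, with homogeneous identity $e_K$ of degree $e$.

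To apply this, we must check that $K$ is graded simple as an algebra. Let $L$ be a nonzero graded ideal of $K$. Then $KLK$ is a graded ideal of $A$ contained in $L$. If $KLK=0$, then $(AL A)^3=0$, and a nilpotent graded ideal is zero as in the first paragraph; so $KLK\neq 0$. By minimality of $K$ we get $KLK=K$, hence $L=K$.

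Now $e_K$ is central in $A$: for any $a\in A$, both $ae_K$ and $e_Ka$ lie in $K$, and $a e_K - e_K a e_K\in \mathrm{Ann}$-type expressions vanish because they lie in a square-zero graded ideal. Thus $A=Ae_K\oplus A(1-e_K)$, interpreting $1-e_K$ formally if $A$ is not unital.

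\emph{Step 2: induction.} By induction on dimension, $A$ is a direct product $A_1\times\cdots\times A_m$ of graded simple algebras, each with a homogeneous central identity $e_i$.

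\emph{Step 3: identifying $I$.} For each $i$, the product $Ie_i$ is a graded ideal of $A_i$, so either $Ie_i=0$ or $Ie_i=A_i$. Since every $x\in I$ decomposes as $x=\sum_i xe_i$, we get $I=\bigoplus_{i\in S}A_i$ for $S=\{i : Ie_i=A_i\}$. Take $e=\sum_{i\in S}e_i$. It is a homogeneous central idempotent of degree $e$, and $I=Ae$.

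The main obstacle is the unital/centrality step for the minimal ideal $K$ in Step 1. We would handle it through the identification $K\cong M_n(D)(\gamma)$, and by showing that the commutator-type elements $ae_K-e_Kae_K$ and $e_Ka-e_Kae_K$ generate graded ideals of square zero, which therefore vanish because $J^{gr}(A)=0$.
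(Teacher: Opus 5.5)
The paper gives no proof of this lemma; it quotes Lemma 2.3 of [DGK]. So your argument has to stand on its own. Its architecture is sound: a minimal graded ideal $K\subseteq I$ is not square-zero, it is graded simple, it has a central identity, it splits off, you induct, and you read off $I$.

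\textbf{The gap.} The one step carrying the real content is not justified: the existence of an identity $e_K\in K$. You take it from the structure theorem for graded simple graded-Artinian algebras. The graded ring theory monographs state that result for rings with identity, and nothing tells you in advance that the ideal $K$ has one. Unless the version you cite explicitly allows non-unital algebras, this step presupposes the main point. You also cannot borrow it from elsewhere in the paper: unitality of finite-dimensional algebras with $J^{gr}=0$ (Corollary~\ref{jgrunital}) is derived there \emph{from} Lemma~\ref{idempotent}.

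\textbf{How to fill it.} Prove it directly with a graded Brauer argument.
\begin{enumerate}
\item $K$ has no nonzero nilpotent graded ideals. Hence a minimal graded right ideal $R$ of $K$ has $R^2\neq 0$, since otherwise $R+KR$ is a square-zero graded ideal.
\item Choose homogeneous $r\in R$ with $rR=R$, and write $r=rf$ with $f\in R_e$. The set $\{y\in R: ry=0\}$ is a graded right ideal properly inside $R$, so it is zero. Since $r(f^2-f)=0$, this gives $f^2=f$.
\item Take a degree-$e$ idempotent $f\in K$ with $\dim fKf$ maximal. The graded subalgebra $(1-f)K(1-f)$, computed formally, contains no nonzero degree-$e$ idempotent; otherwise add it to $f$.
\item Hence $(1-f)K(1-f)$ is nilpotent: apply step 2 to its quotient by its largest nilpotent graded ideal, and lift idempotents in degree $e$.
\item So $(1-f)K$ and $K(1-f)$ are nilpotent graded one-sided ideals. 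They generate nilpotent graded ideals and therefore vanish, i.e.\ $f=e_K$.
\end{enumerate}

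\textbf{Three smaller points.}
\begin{enumerate}
\item Once $e_K$ exists, centrality needs no square-zero argument. Since $ae_K$ and $e_Ka$ lie in $K$, you get $ae_K=e_Kae_K=e_Ka$ directly.
\item In the graded-simplicity step, $(ALA)^3=0$ only yields $ALA=0$, which does not force $L=0$ when $A$ lacks an identity. Use instead the ideal $\langle L\rangle=L+AL+LA+ALA$ of $A$. Its cube lies in $KLK$, so $KLK=0$ makes $\langle L\rangle$ a nilpotent graded ideal, hence $L=0$.
\item The induction should carry the hypothesis you actually use: $A$ has no nonzero nilpotent graded ideals. This passes to $A(1-e_K)$ because its graded ideals are graded ideals of $A$.
\end{enumerate}
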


We recall that the unitization of an $\mathbb{F}$-algebra $A$ is the vector space $A^{\sharp}=\mathbb{F}\times A$ with the multiplication \[(\lambda, x)(\mu, y)=(\lambda \mu, \mu x+\lambda y+xy).\] We identify $\mathbb{F}$ with the subspace $\mathbb{F}\times \{0\}$ and $A$ with the ideal $\{0\}\times A$ and write $A^{\sharp}=\mathbb{F}\oplus A$. If $A=\oplus_{g\in G} A_g$ is $G$-graded then we consider the grading on $A^{\sharp}$ such that $(A^{\sharp})_e=\mathbb{F}\oplus A_e$ and $(A^{\sharp})_g=A_g$ for all $g\in G\setminus \{e\}$.

\begin{lemma}\label{jgrunitization}
Let $A$ be a finite dimensional $G$-graded algebra and let $A^{\sharp}$ be its unitization. Then $J^{gr}(A)=J^{gr}(A^{\sharp})$.
\end{lemma}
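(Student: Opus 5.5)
We prove the two inclusions separately, comparing graded simple right modules over $A$ and over $A^{\sharp}$. The dictionary between the two is the following. Any right $A$-module $M$ becomes a right $A^{\sharp}$-module via $m(\lambda,x)=\lambda m+mx$. Conversely, any right $A^{\sharp}$-module is, by restriction, an $A$-module. Both constructions preserve gradings, because $\mathbb{F}\subseteq (A^{\sharp})_e$.

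For $J^{gr}(A^{\sharp})\subseteq J^{gr}(A)$, first note that $J^{gr}(A^{\sharp})\subseteq A$. Since $A^{\sharp}$ is finite dimensional, the Remark says $J^{gr}(A^{\sharp})$ is nilpotent. If $(\lambda,x)\in J^{gr}(A^{\sharp})$, then $(\lambda,x)^n=(\lambda^n,\ast)=0$ for some $n$, so $\lambda=0$.

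Now let $M$ be a graded simple right $A$-module and regard it as an $A^{\sharp}$-module. A graded $A^{\sharp}$-submodule of $M$ is exactly a graded $A$-submodule, since scalars preserve any subspace. Also $MA^{\sharp}\supseteq MA\neq 0$. Hence $M$ is graded simple over $A^{\sharp}$, and its annihilator in $A^{\sharp}$ meets $A$ in $\mathrm{Ann}_G(M)$. Intersecting over all such $M$ gives $J^{gr}(A^{\sharp})=J^{gr}(A^{\sharp})\cap A\subseteq J^{gr}(A)$. The degenerate case where $A$ has no graded simple modules is covered once the reverse inclusion is established.

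For $J^{gr}(A)\subseteq J^{gr}(A^{\sharp})$, take a graded simple right $A^{\sharp}$-module $N$; we must show $N\,J^{gr}(A)=0$. There are two cases.

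\emph{Case $NA=0$.} Then $J^{gr}(A)\subseteq A$ annihilates $N$ trivially.

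\emph{Case $NA\neq 0$.} The set $NA$ is a graded $A^{\sharp}$-submodule, so $NA=N$. We check that $N$ is graded simple as an $A$-module. Let $L$ be a nonzero proper graded $A$-submodule. Then $L$ is also an $A^{\sharp}$-submodule, since $\mathbb{F}$ acts by scalars, which contradicts the simplicity of $N$. Therefore $N$ is graded simple over $A$, and $\mathrm{Ann}_G(N)\supseteq J^{gr}(A)$ by definition.

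This argument also handles the convention $J^{gr}(A)=A$ when $A$ has no graded simple modules. In that case every graded simple $A^{\sharp}$-module must satisfy $NA=0$, so $A\subseteq J^{gr}(A^{\sharp})$, and the first part gives equality.

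The main subtlety is the first inclusion, specifically showing that $J^{gr}(A^{\sharp})$ lies inside $A$. This is where finite dimensionality is used, through the nilpotence stated in the Remark. One alternative would be to exhibit the graded simple module $\mathbb{F}=A^{\sharp}/A$, annihilated by $A$. Going that route requires checking that $A$ is a maximal graded right ideal and that $(\mathbb{F})A^{\sharp}\neq0$; the nilpotence argument avoids these checks.
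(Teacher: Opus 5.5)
Your proof is correct. Its first step matches the paper, but the rest takes a different route.

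The paper also starts by placing $J^{gr}(A^{\sharp})$ inside $A$ via nilpotence: it gets $J^{gr}(A^{\sharp})\subseteq J(A^{\sharp})$ from [NVO, Corollary 2.9.4(i)]. It then obtains both inclusions by citing [NVO, Corollary 2.9.4(ii)] twice, applied to the nilpotent graded ideal $J^{gr}(A^{\sharp})$ of $A$ and to the nilpotent graded ideal $J^{gr}(A)$ of $A^{\sharp}$.

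You instead compare graded simple right modules over $A$ and over $A^{\sharp}$ directly. This has three advantages:
\begin{itemize}
\item It is elementary and self-contained.
\item It treats the convention $J^{gr}(A)=A$ explicitly, which the paper passes over.
\item It needs nilpotence only for the unital algebra $A^{\sharp}$. The paper's reverse inclusion instead uses that $J^{gr}(A)$ is nilpotent for the possibly non-unital $A$, which is most directly justified through the unitization itself.
\end{itemize}

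You can in fact drop nilpotence altogether. The alternative you set aside costs nothing: $A^{\sharp}/A$ is one-dimensional and concentrated in degree $e$, and $\bar{1}\cdot 1=\bar{1}\neq 0$. So it is a graded simple right $A^{\sharp}$-module whose annihilator is exactly $A$, and no checks are needed. With that first step, your argument proves $J^{gr}(A)=J^{gr}(A^{\sharp})$ for every $G$-graded algebra, with no finite dimensionality assumption.

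A minor point: in the case $NA\neq 0$ you do not need to derive $NA=N$. The condition $NA\neq 0$ alone, together with your submodule argument, makes $N$ graded simple over $A$.
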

\begin{proof}
It follows from \cite[Corollary 2.9.4 (i)]{NVO} that $J^{gr}(A^{\sharp})\subseteq J(A)$, therefore $J^{gr}(A^{\sharp})$ is a nilpotent ideal. This implies that $J^{gr}(A^{\sharp})\subseteq A$. As a consequence \cite[Corollary 2.9.4 (ii)]{NVO} implies that $J^{gr}(A^{\sharp})\subseteq J^{gr}(A)$. Since $J^{gr}(A)$ is a nilpotent graded ideal of $A^{\sharp}$ the same result implies that $J^{gr}(A)\subseteq J^{gr}(A^{\sharp})$.
\end{proof}

\begin{cor}\label{jgrunital}
If $A$ is a finite dimensional graded algebra with $J^{gr}(A) = 0$ then $A$ is a unital algebra. 
\end{cor}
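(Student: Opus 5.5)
The plan is to apply Lemma~\ref{jgrunitization} and then Lemma~\ref{idempotent} to $A^{\sharp}$, with $A$ viewed as a graded ideal of $A^{\sharp}$. Let $A$ be finite dimensional with $J^{gr}(A)=0$, and form the unitization $A^{\sharp}=\mathbb{F}\oplus A$ with the grading described above. Then $A^{\sharp}$ is finite dimensional, and Lemma~\ref{jgrunitization} gives $J^{gr}(A^{\sharp})=J^{gr}(A)=0$. So $A^{\sharp}$ satisfies the hypothesis of Lemma~\ref{idempotent}.

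Next I check that $A=\{0\}\times A$ is a graded ideal of $A^{\sharp}$. It is an ideal by the multiplication formula. It is graded because $A\cap (A^{\sharp})_e=A_e$ and $A\cap (A^{\sharp})_g=A_g$ for $g\neq e$, and these pieces sum to $A$. Lemma~\ref{idempotent} then provides a homogeneous central idempotent $\varepsilon\in A^{\sharp}$ with $A=A^{\sharp}\varepsilon$.

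Finally, $\varepsilon$ is a unit for $A$. Since $\varepsilon=1\cdot\varepsilon\in A^{\sharp}\varepsilon=A$, we have $\varepsilon\in A$. Every $a\in A$ can be written $a=b\varepsilon$ with $b\in A^{\sharp}$. Then $a\varepsilon=b\varepsilon^2=b\varepsilon=a$, and by centrality $\varepsilon a=a$. Hence $\varepsilon$ is an identity element of $A$, so $A$ is unital. Since $\varepsilon$ is a nonzero homogeneous idempotent, it lies in $A_e$, although this is not needed here.

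The only point needing care is confirming that $A$ is a graded ideal in the sense of the definition, and that is immediate. I do not expect a serious obstacle: the argument is a direct consequence of the two lemmas.
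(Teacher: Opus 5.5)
Your proposal is correct and is essentially the paper's argument: form $A^{\sharp}$, use Lemma~\ref{jgrunitization} to get $J^{gr}(A^{\sharp})=0$, and apply Lemma~\ref{idempotent} to the graded ideal $A$ of $A^{\sharp}$. You also spell out the step the paper leaves implicit, namely that the central idempotent $\varepsilon\in A^{\sharp}\varepsilon=A$ is an identity element for $A$.
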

\begin{proof}
Let $A^{\sharp}$ be the unitization of $A$. Lemma \ref{jgrunitization} implies that $J^{gr}(A^{\sharp}) = 0$. Since $A$ is a graded ideal of $A^{\sharp}$ the result follows from Lemma \ref{idempotent}.
\end{proof}

Note that Corollary \ref{jgrunital} implies that \cite[Theorem 2.4]{DGK} holds without the hypothesis that the algebra is unital, hence we have the following result:

\begin{Theorem}\label{decomposição estrutural de álgebras semi primas graduadas}
	Let $A$ be a finite dimensional graded algebra, if $J^{gr}(A) = 0$ then $A$ is a direct sum of finitely many graded simple algebras.
\end{Theorem}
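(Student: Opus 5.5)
The plan is to argue by induction on $\dim A$, using Corollary \ref{jgrunital} to know that $A$ is unital and Lemma \ref{idempotent} to split off graded ideals as direct summands. If $A=0$ there is nothing to prove. So assume $A\neq 0$ and $J^{gr}(A)=0$. Among all nonzero graded ideals of $A$, pick one of minimal dimension and call it $I$; it exists because $A$ itself is a nonzero graded ideal. By Lemma \ref{idempotent} there is a homogeneous central idempotent $e$ with $I=Ae$. Let $1$ be the unit from Corollary \ref{jgrunital}. Since $e$ is central, we get the decomposition
\[A=Ae\oplus A(1-e)\]
as a direct sum of ideals. Both summands are graded: $e$ is a nonzero idempotent and homogeneous, so $e^2=e$ forces $\|e\|=\|e\|^2$, hence $e\in A_e$ and $1-e\in A_e$.

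Next I show that $I$ is graded simple. Any graded ideal $K$ of the algebra $I=Ae$ satisfies $AK=AeK=IK\subseteq K$, because $e$ is the unit of $I$ and is central; similarly $KA\subseteq K$. So $K$ is a graded ideal of $A$, and minimality of $I$ forces $K=0$ or $K=I$. Also $I^2=I\neq 0$, since $e\in I^2$. Hence $I$ is graded simple.

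Finally, set $B=A(1-e)$; it has smaller dimension than $A$. I need $J^{gr}(B)=0$, and this is the main point to check. I would argue as follows. A graded simple right $B$-module becomes a graded simple right $A$-module by letting $Ae$ act as zero, and its annihilator in $A$ is $\mathrm{Ann}_B(M)\oplus Ae$. Intersecting over all such modules gives $J^{gr}(A)\supseteq J^{gr}(B)$. One also has to handle the convention in the case where $B$ has no graded simple modules. If $B\neq 0$, then $B$ is unital, so a maximal graded right ideal exists and $B$ does have graded simple modules; if $B=0$ there is nothing to prove. Therefore $J^{gr}(B)\subseteq J^{gr}(A)=0$. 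By induction $B$ is a finite direct sum of graded simple algebras, and hence so is $A=I\oplus B$.
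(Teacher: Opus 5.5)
Your overall plan is sound: induct on $\dim A$, split off a minimal graded ideal $I=Ae$ via Lemma \ref{idempotent} and the unit from Corollary \ref{jgrunital}, and check that $I$ is graded simple. It would give a self-contained argument, whereas the paper simply invokes \cite[Theorem 2.4]{DGK} for unital algebras and uses Corollary \ref{jgrunital} to remove unitality.

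The gap is in the step $J^{gr}(B)=0$, where the inclusion you derive runs the wrong way. Inflating graded simple $B$-modules produces only some of the graded simple $A$-modules. An intersection of annihilators over a subfamily is larger than over the whole family, so your (correct) annihilator computation actually gives
\[
J^{gr}(A)\subseteq \bigcap_{M\ \text{from}\ B}\bigl(\mathrm{Ann}_B(M)\oplus Ae\bigr)=J^{gr}(B)\oplus Ae .
\]
This yields no information about $J^{gr}(B)$ from $J^{gr}(A)=0$. Showing that $B$ has graded simple modules does not repair this.

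What you need is that $J^{gr}(B)$ annihilates every graded simple $A$-module $M$. So you must pass from $A$-modules to $B$-modules, not the other way.
\begin{itemize}
\item Since $M1$ is a graded submodule with $(M1)A=MA\neq 0$, simplicity gives $M=M1$.
\item Hence $M=Me\oplus M(1-e)$, a sum of graded submodules because $e$ is central of trivial degree. So $M=Me$ or $M=M(1-e)$.
\item If $M=Me$, then $MB=M(1-e)A=0$.
\item If $M=M(1-e)$, then $Ae$ acts as zero and graded $B$-submodules of $M$ are $A$-submodules. Also $MB=MA\neq 0$, so $M$ is a graded simple $B$-module.
\end{itemize}
Either way $MJ^{gr}(B)=0$, so $J^{gr}(B)\subseteq J^{gr}(A)=0$. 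This also covers the convention $J^{gr}(B)=B$ when $B$ has no graded simple modules.

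Alternatively, there is a shorter repair. $J^{gr}(B)$ is a graded ideal of $A$, since $B$ is an ideal direct summand. It is nilpotent, since $B$ is finite dimensional. A nilpotent graded ideal $N$ kills every graded simple $M$, because $MN$ is a graded submodule and $MN=M$ would force $M=MN^k=0$. This is exactly the fact the paper uses, via \cite{NVO}, in Lemma \ref{jgrunitization}.

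With either repair your induction goes through.
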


\begin{definition}
An ideal $J$ of $A$ is graded-nil if every homogeneous element in $J$ is nilpotent.
\end{definition}

The following result will be useful to prove the main results of the paper. Its proof is similar to the proof in the ordinary case given in \cite{GZ}. For the convenience of the reader we present the proof in the graded case here.

\begin{lemma}\label{lifting}
		Let $A$ be a unital $G$-graded algebra and let $J$ be a graded-nil ideal of $A$. Then:  
		
		\begin{enumerate}
			\item Every homogeneous idempotent of $A/J$ can be lifted to a homogeneous idempotent of $A$.
			
			\item Every finite set of orthogonal homogeneous idempotents of $A/J$ can be lifted to a set of orthogonal homogeneous idempotents of $A$.
			
			\item Every set of homogeneous $n\times n$ matrix units of $A/J$ can be lifted to a set of homogeneous $n\times n$ matrix units of $A$. 
		\end{enumerate}
	\end{lemma}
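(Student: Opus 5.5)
We follow the ordinary proof from \cite{GZ}, checking at each stage that the elements we build stay homogeneous. The grading is preserved because every construction is a polynomial with integer coefficients in homogeneous elements whose degrees are compatible.

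For (1), let $\bar u$ be a homogeneous idempotent of $A/J$ with $\bar u\neq 0$. Its degree $g$ satisfies $g^2=g$, so $g=e$. Since $J$ is graded, $A/J$ inherits a grading and the projection $A\to A/J$ is graded. We may therefore choose a preimage $u\in A_e$. Then $u-u^2\in J\cap A_e$ is homogeneous, hence nilpotent, say $(u-u^2)^m=0$.

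Write $1=(u+(1-u))^{2m-1}$ and let $f(u)$ be the sum of the terms of the binomial expansion in which $u$ appears to a power at least $m$. Then $f(u)(1-f(u))$ is divisible by $u^m(1-u)^m=(u-u^2)^m=0$, so $f(u)$ is idempotent. Moreover $f(u)\equiv u \pmod J$, since $u^k\equiv u$ and $(1-u)^k\equiv 1-u$ modulo $J$. Because $u\in A_e$ and $1\in A_e$ (a unit is always homogeneous of degree $e$), $f(u)$ lies in $A_e$. This settles (1).

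For (2), we induct on the number of idempotents. Suppose $e_1,\dots,e_{k-1}\in A_e$ are orthogonal idempotents lifting $\bar e_1,\dots,\bar e_{k-1}$. Put $\varepsilon=e_1+\dots+e_{k-1}\in A_e$, and let $u\in A_e$ be an idempotent lift of $\bar e_k$ obtained from (1). Set $v=(1-\varepsilon)u(1-\varepsilon)\in A_e$. Then $\bar v=\bar e_k$, because $\bar e_k$ is orthogonal to $\bar\varepsilon$. Applying (1) to $v$ inside the graded unital corner $(1-\varepsilon)A(1-\varepsilon)$, whose unit $1-\varepsilon$ lies in $A_e$ and in which $J\cap(1-\varepsilon)A(1-\varepsilon)$ is graded-nil, we obtain an idempotent $e_k$. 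It is a polynomial in $v$ without constant term inside that corner, so $e_k\in A_e$ and $e_k$ is orthogonal to each $e_i$. Alternatively, we can apply $f$ directly to $v$ in $A$, since the polynomial $f$ has no constant term.

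For (3), let $\{\bar e_{ij}\}$ be homogeneous matrix units of degrees $g_{ij}$. The relation $\bar e_{ij}\bar e_{jk}=\bar e_{ik}$ forces $g_{ij}g_{jk}=g_{ik}$, and $g_{ii}=e$. We first lift the diagonal units to orthogonal idempotents $e_{ii}\in A_e$ using (2). We then choose homogeneous preimages $x_{1j}\in A_{g_{1j}}$ and $y_{j1}\in A_{g_{j1}}$ and replace them by $e_{11}x_{1j}e_{jj}$ and $e_{jj}y_{j1}e_{11}$; these remain homogeneous because the $e_{ii}$ have degree $e$.

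Next, $a=e_{11}x_{1j}y_{j1}e_{11}$ lies in $A_e$ and satisfies $a\equiv e_{11}$ modulo $J$. So $a=e_{11}+n$ with $n\in e_{11}Je_{11}\cap A_e$ nilpotent, and $a$ is invertible in the corner $e_{11}Ae_{11}$. Its inverse $a^{-1}=e_{11}-n+n^2-\cdots$ is homogeneous of degree $e$. Define $e_{1j}=x_{1j}$ and $e_{j1}=y_{j1}a^{-1}$, so that $e_{1j}e_{j1}=e_{11}$.

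The remaining relation $e_{j1}e_{1j}=e_{jj}$ is the main obstacle. The element $e_{j1}e_{1j}$ is an idempotent in $e_{jj}Ae_{jj}$ congruent to $e_{jj}$ modulo $J$. Hence $e_{jj}-e_{j1}e_{1j}$ is an idempotent lying in $J\cap A_e$; being homogeneous it is nilpotent, and a nilpotent idempotent is $0$. Finally we set $e_{ij}=e_{i1}e_{1j}$, which is homogeneous of degree $g_{i1}g_{1j}=g_{ij}$. The matrix unit relations then follow formally, and $e_{ij}$ lifts $\bar e_{ij}$.
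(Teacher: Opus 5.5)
Your proof is correct, and its overall plan is the paper's. You choose preimages of the right degree, so everything built from idempotents stays in $A_e$. You lift idempotents by a polynomial with no constant term, induct for orthogonal families, and for matrix units correct one off-diagonal lift by the inverse of an element congruent to $e_{11}$ modulo $J$. Finally, $e_{jj}-e_{j1}e_{1j}$ is a homogeneous idempotent in $J$, hence zero.

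Part (3) is essentially the paper's argument. The paper corrects $e_{1i}$ on the left by $(1-b_i)^{-1}$, inverted in $A$; you correct $e_{j1}$ on the right by $a^{-1}$, inverted in the corner $e_{11}Ae_{11}$.

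The real differences are local, in (1) and (2):
\begin{itemize}
\item In (1) you use the binomial idempotent $\sum_{k\ge m}\binom{2m-1}{k}u^k(1-u)^{2m-1-k}$. Its idempotency is immediate, since $f(u)(1-f(u))$ is a multiple of $(u-u^2)^m=0$. The paper instead writes $(a-a^2)^k=a^k-a^{k+1}f(a)$ and shows $(af)^k$ is idempotent via $a^k=a^{2k}f^k$. The grading is handled identically in both.
\item In (2) the paper keeps one lift $x$ and uses the conjugation formula $x_{n+1}=(1-sx)x(1-sx)^{-1}(1-s)$, checking idempotency and orthogonality by direct computation. You compress to $v=(1-\varepsilon)u(1-\varepsilon)$ and re-lift inside the graded unital corner $(1-\varepsilon)A(1-\varepsilon)$. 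This makes orthogonality automatic, because $\varepsilon(1-\varepsilon)=0$ kills the corner on both sides. The price is checking that the corner is graded with unit of degree $e$ and that $J$ restricts to a graded-nil ideal there, which you do.
\end{itemize}

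Three small points:
\begin{itemize}
\item In (1), the congruence $f(u)\equiv u$ also needs $\bar u(1-\bar u)=0$ to kill the mixed terms, not just $u^k\equiv u$ and $(1-u)^k\equiv 1-u$.
\item In (2), the preliminary idempotent lift $u$ is unnecessary; any preimage in $A_e$ works, since you re-lift $v$ anyway.
\item In (3), the correction step should be performed only for $j\ge 2$, keeping the idempotent $e_{11}$ obtained from (2).
\end{itemize}
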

	
	\begin{proof}

 $(1)$ Let $y\in A/J$ be a homogeneous idempotent. Then $y$ lies in $(A/J)_e$. Hence there exists an element $a\in A_{e}$ such that $\overline{a}=y$, where $\overline{a}=a+J$. Note that $a-a^{2}\in J$. Since $a-a^{2}\in J_e$ and $J$ is graded-nil, there exists an integer $k\geq 1$ such that $(a-a^{2})^{k}=0$. We expand $(a-a^{2})^{k}$ and obtain a polynomial $f(x)\in \mathbb{F}[x]$ such that 
 	\[0=(a-a^{2})^{k}=a^{k}-a^{k+1}f(a).\] 	By induction on $t$, we conclude that $a^{k}=a^{k+t}f^{t}$ for all $t\in\mathbb{N}$, in particular $a^{2k}f^{k}=a^{k}$. Note that $x=(af)^{k}\in A_{e}$. We have
 	$$x^{2}=(af)^{2k}=a^{2k}f^{k}f^{k}=a^{k}f^{k}=(af)^{k}=x.$$
 	Therefore $x$ is a homogeneous idempotent of $A$. Moreover,
 	$$\overline{x}=\overline{(af)^{k}}=\overline{a}^{k}\overline{f^{k}}=y^{k}\overline{f^{k}}=(y^{2})^{k}\overline{f^{k}}=\overline{a^{2k}}\overline{f^{k}}=\overline{a^{2k}f^{k}}=\overline{a^{k}}=y^{k}=y.$$
 	
 	$(2)$ We prove this assertion by induction on the number $n$ of orthogonal idempotents to be lifted. For $n=1$, the result follows from $(1)$. Let us assume the result holds for $n$ homogeneous orthogonal idempotents and let $y_{1},\ldots,y_{n+1}$ be a set of homogeneous orthogonal idempotents of $A/J$. Then $y_{1},\ldots,y_{n}\in (A/J)_{e}$ and there exist $x_{1},\ldots,x_{n}\in A_{e}$ orthogonal idempotents such that $\overline{x_{i}}=y_{i}$ for all $i=1,\ldots,n$. Let $s=\sum_{i=1}^{n}x_{i}\in A_{e}$. Moreover, by $(1)$ the idempotent $y_{n+1}$ can be lifted to an idempotent $x\in A_{e}$. Note that $\overline{sx}=0$, i.\,e., $sx\in J$. Since $sx$ is homogeneous and $J$ graded-nil, we have $(sx)^{k}=0$, for some integer $k\geq 1$. Then
 	$1-sx$ invertible and 
    \begin{equation}\label{inverse}
    (1-sx)^{-1}=(sx)^{k-1}+\cdots+sx+1.
    \end{equation}
    Let $x_{n+1}=(1-sx)x(1-sx)^{-1}(1-s)\in A_{e}$. We claim that $x_{n+1}$ is the desired idempotent. Since $s$ and $x$ are idempotents, direct computations yield
 	\[(1-s)(1-sx)x=(1-sx)x.\]
 	The equality above and the definition of $x_{n+1}$ together with the fact that $x$ is an idempotent imply that $x_{n+1}^2=x_{n+1}$.
 	Moreover it follows from (\ref{inverse}) that \linebreak$\overline{(1-sx)^{-1}}=1$, therefore
 	$\overline{x_{n+1}}=y_{n+1}$.
 	Finally, note that $(1-s)s=s-s^{2}=s-s=0$, hence $x_{n+1}s=0$. Moreover,
 	$s(1-sx)x=sx-s^2x^2=0$. Hence, $sx_{n+1}=0=x_{n+1}s$. Since $sx_{i}=x_{i}s=x_{i}$, we conclude that $x_ix_{n+1}=0=x_{n+1}x_i$,  for all $1\leq i\leq n$.
 	
 	$(3)$ Let $E_{ij}\in A/J$, $1\leq i,j\leq n$ be homogeneous matrix units. There exist $g_1,\dots, g_n\in G$ such that $E_{ij}$ has degree $g_ig_j^{-1}$ for all $1\leq i,j\leq n$. It follows from $(2)$ that there exist homogeneous orthogonal idempotents $e_{11},\ldots,e_{nn} $ in $A$ such that $\overline{e_{ii}}=E_{ii}$ for all $i=1,\dots, n$. Let $a_{ij}$ be an element of $A_{g_{i}g_{j}^{-1}}$ such that $\overline{a_{ij}}=E_{ij}$. We define 
    \begin{equation*}
    e_{i1}=e_{ii}a_{i1}e_{11}\in A_{g_{i}g_{1}^{-1}}. 
    \end{equation*}
    Let $b_{i}=e_{11}-a_{1i}e_{i1}\in A_e$, we have
 	$\overline{b_{i}}=E_{11}-E_{1i}E_{i1}=E_{11}-E_{11}=0$. Hence $b_{i}\in J_e$ is nilpotent and $1-b_{i}$ is an invertible element of $A_e$. Note that the element 
    \begin{equation*}
    e_{1i}:=(1-b_{i})^{-1}a_{1i}e_{ii}\in A_{g_{1}g_{i}^{-1}} 
    \end{equation*}
    lifts $E_{1i}$. Note that $a_{1i}e_{i1}=e_{11}-b_i$, hence 	
    \begin{equation}\label{ei1e1i}
 	e_{1i}e_{i1}=(1-b_i)^{-1}a_{1i}e_{i1} =(1-b_i)^{-1}(e_{11}-b_{i})=(1-b_i)^{-1}(1-b_{i})e_{11}=e_{11}.
    \end{equation}
 Note that $e_{ii}e_{i1}=e_{i1}$ and $e_{1i}e_{ii}=e_{1i}$, hence (\ref{ei1e1i}) implies that $e_{ii}-e_{i1}e_{1i}$ is an idempotent. Moreover
 	\begin{align*}
 		\overline{e_{ii}-e_{i1}e_{1i}}=E_{ii}-E_{i1}E_{1i}=0,
 	\end{align*}
 	Therefore $e_{ii}-e_{i1}e_{1i}$ lies in $J_e$ and is also nilpotent. This implies that $e_{i1}e_{1i}=e_{ii}$. Let $e_{ij}:=e_{i1}e_{1j}$, then $\{e_{i,j}\mid 1\leq i,j\leq n\}$ is a set of matrix units and $e_{ij}$ lifts $E_{ij}$ for all $1\leq i,j\leq n$.
	\end{proof}


\subsection{Varieties of Graded Algebras}

Let $X_G=\cup_{g\in G}X_{g}$, where $X_{g}=\{x_{i,g}\mid i\geq 1, g\in G\}$, we assume that $X_{g}\cap X_{h}=\emptyset$ whenever $g\neq h$. We denote by $\mathbb{K}\langle X_G\rangle$ the free $G$-graded algebra, recall that $||x_{i,g}||=g$ for all $i\geq 1$ and for all $g\in G$. 

Let $A=\oplus_{g\in G}A_g$ be a $G$-graded algebra. Any map $\psi: X_G\rightarrow A$ such that $\psi(X_{g})\subseteq A_g$ for all $g\in G$ can be uniquely extended to a homomorphism $\Psi:\mathbb{K}\langle X_G\rangle\rightarrow A$ of graded algebras. Given $f=f(x_{1,g_1},\dots, x_{n,g_n})\in \mathbb{K}\langle X_G \rangle$ the image of $f$ under $\Psi$ depends only on the elements $a_i=\psi(x_{i,g_i})$, for $i=1,\dots, n$ and is denoted $f(a_1,\dots, a_n)$. Moreover $f=0$, or simply $f$, is a graded polynomial identity for $A$ if $f(a_1,\dots, a_n)=0$ for all $a_1\in A_{g_1}, \dots, a_n\in A_{g_n}$. The set $Id_G(A)$ of all graded polynomial identities for $A$ is a $T_G$-ideal of $\mathbb{K}\langle X_G\rangle$, i.\,e., it is an ideal invariant under all endomorphisms of $\mathbb{K}\langle X_G\rangle$ as a graded algebra. We remark that if $G=\{e\}$ is the trivial group then we are speaking of the ordinary identities of $A$, in this case we omit the symbols of $G$ and $e$ in the notation, hence the free algebra is denoted $\mathbb{K}\langle X \rangle$ and $X=\{x_1,x_2,\dots\}$.

Let $S\subseteq \mathbb{K}\langle X_G\rangle$, the variety  $\mathfrak{V}$ determined by $S$ is the class of all graded algebras $A$ such that $S\subseteq Id_G(A)$. Thus by a variety of graded algebras we understand the class of all graded algebras determined by some given set of graded polynomial identities. Given a class $\mathcal{A}$ of algebras we denote $\mathrm{var}(\mathcal{A})$ the smallest variety that contains $\mathcal{A}$. If $\mathcal{A}=\{A\}$ we denote it simply by $\mathrm{var}(A)$. A set $S\subseteq \mathbb{K}\langle X_G\rangle$ is a basis for $Id_G(A)$ if this is the smallest $T_G$-ideal of $\mathbb{K}\langle X_G\rangle$ that contains $S$. This occurs if and only if $\mathrm{var}(A)$ is the variety determined by $S$. 

The following concept will be important for the proofs of our main results:

\begin{definition}
A $G$-graded algebra $A$ is graded subdirectly irreducible if the intersection of all non-zero graded ideals is different from zero.
\end{definition}

In \cite{BY} the authors prove that graded algebras can be seen as universal algebras, therefore Birkhoff's Theorem, see \cite[Theorem 2.6]{J}, implies the following proposition:

\begin{proposition}\label{sdirr}
Every $G$-graded algebra is a subdirect product of graded subdirectly irreducible algebras.
\end{proposition}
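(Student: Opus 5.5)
The plan is to follow the standard proof of Birkhoff's subdirect representation theorem, carried out directly in the category of $G$-graded algebras. This avoids relying on the universal-algebra formalism of \cite{BY}, although that formalism would also make the statement an immediate instance of \cite[Theorem 2.6]{J}.

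Let $A$ be a $G$-graded algebra. If $A=0$, it is the empty subdirect product and there is nothing to prove, so assume $A\neq 0$. For each nonzero homogeneous element $a\in A_g$, consider the set $\mathcal{I}_a$ of graded ideals $I$ of $A$ with $a\notin I$. This set is nonempty because $0\in\mathcal{I}_a$. It is closed under unions of chains, since the union of a chain of graded ideals is a graded ideal, and $a$ does not lie in the union. By Zorn's Lemma there is a maximal element $I_a\in\mathcal{I}_a$.

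The quotient $A/I_a$ carries the induced $G$-grading, because $I_a$ is graded. We show it is graded subdirectly irreducible. Nonzero graded ideals of $A/I_a$ correspond to graded ideals $J\supsetneq I_a$ of $A$. By maximality, each such $J$ contains $a$, so every nonzero graded ideal of $A/I_a$ contains the nonzero element $a+I_a$. Hence the intersection of all nonzero graded ideals of $A/I_a$ contains $a+I_a\neq 0$. In particular $A/I_a\neq 0$, since $a\notin I_a$.

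Finally, consider the homomorphism of graded algebras $\Phi:A\to\prod_{a}A/I_a$, where the product ranges over the nonzero homogeneous elements of $A$ and carries the componentwise grading. Each projection $A\to A/I_a$ is surjective. For injectivity, suppose $0\neq x\in\ker\Phi$. Then some homogeneous component $x_g$ of $x$ is nonzero. Each $I_a$ is graded, so $x\in I_a$ implies $x_g\in I_a$ for every $a$. Taking $a=x_g$ gives $x_g\in I_{x_g}$, contradicting the choice of $I_{x_g}$. Hence $\Phi$ is injective, and it exhibits $A$ as a subdirect product of the graded subdirectly irreducible algebras $A/I_a$.

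The main point needing care is the choice to index the ideals by homogeneous elements rather than by arbitrary elements. This is what makes the injectivity argument work using only gradedness of the ideals. The other step to check is that the product of graded algebras should be taken with the componentwise grading; for an infinite product, one uses the restricted "graded product" consisting of sums of homogeneous families. Either way $\Phi$ lands in it, since $\Phi$ maps $A_g$ into the families that are homogeneous of degree $g$.
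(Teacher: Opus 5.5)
Your argument is correct. In substance it is the standard Zorn's-lemma proof of Birkhoff's subdirect representation theorem, transcribed to graded ideals. The paper gives no argument of its own: it invokes \cite{BY} to regard $G$-graded algebras as universal algebras, so that congruences correspond to graded ideals, and then quotes Birkhoff's theorem \cite[Theorem 2.6]{J}.

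The citation buys brevity and places the statement inside the general universal-algebra framework. Your version is self-contained. It uses only two facts: unions of chains of graded ideals are graded, and graded ideals of $A/I_a$ are images of graded ideals of $A$ containing $I_a$. It also makes explicit a point the citation leaves implicit. For an infinite family and infinite $G$, the full product need not be $G$-graded, so one embeds $A$ into the graded product $\bigoplus_{g}\prod_a (A/I_a)_g$, exactly as you do.

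One small correction to your commentary: indexing by homogeneous elements is harmless but not needed. Take, for every nonzero $a\in A$, a graded ideal $I_a$ maximal with respect to $a\notin I_a$. The same reasoning shows that $A/I_a$ is graded subdirectly irreducible, since the intersection of its nonzero graded ideals contains $a+I_a\neq 0$. A nonzero $x\in\ker\Phi$ would then satisfy $x\in I_x$ directly. So homogeneity is not what makes injectivity work; gradedness of the ideals matters only through the two facts above.
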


A variety $\mathfrak{V}$ of graded algebras is locally finite if every algebra in $\mathfrak{V}$ is locally finite. The index of $\mathfrak{V}$ is the supremum of the nilpotency indices of the nilpotent algebras in $\mathfrak{V}$. Next we prove that a variety of algebras over a finite field that has finite index is generated by its finite graded subdirectly irreducible algebras. First we prove that such varieties are locally finite. 

\begin{proposition}\label{locfin}
Let $\mathbb{F}$ be a finite field. Let $\mathfrak{V}$ be a variety of $G$-graded $\mathbb{F}$-algebras and assume there exists $f(x_1,\dots, x_r)\in \mathbb{F}\langle X \rangle$ such that every monomial in $f$ has degree greater than $r$ and every algebra of $\mathfrak{V}$ satisfies the ordinary polynomial identity 
\begin{equation}\label{identityv}
x_1\cdots x_r=f(x_1,\dots, x_r).
\end{equation}
Then $\mathfrak{V}$ is locally finite. In particular $\mathfrak{V}$ is generated by its finite algebras.
\end{proposition}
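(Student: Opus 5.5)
To prove the proposition, I will show that every finitely generated algebra $A$ in $\mathfrak{V}$ is finite. Since $\mathbb{F}$ is finite, it suffices to show that $A$ is a finite dimensional vector space. Let $A$ be generated by finitely many elements. Since $A$ is graded, each generator decomposes into finitely many homogeneous components. Hence we may assume $A$ is generated by a finite set $S=\{a_1,\dots,a_m\}$ of homogeneous elements. As a vector space, $A$ is spanned by the products $a_{i_1}\cdots a_{i_k}$ with $k\geq 1$.

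The key step is a reduction of long products. Take any word $w=a_{i_1}\cdots a_{i_k}$ with $k\geq r$. Apply the identity (\ref{identityv}) with $x_j\mapsto a_{i_j}$ for $j<r$ and $x_r\mapsto a_{i_r}\cdots a_{i_k}$. Only the ordinary identity is needed, and the substituted elements need not be homogeneous. This rewrites $w$ as a linear combination of products of the same $a$'s in which each monomial of $f$ has degree greater than $r$. So the rewritten terms have length in $S$ at least $k+1$, and the identity does not directly shorten words.

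The approach is therefore the opposite one: show that for some $N$, every word of length $N$ lies in the span of words of length greater than $N$. Iterating, $A^{N}=A^{M}$ for all $M\geq N$, which gives a statement about powers rather than finiteness. The standard fix uses finiteness of $\mathbb{F}$. A variety of algebras over a finite field satisfying $x_1\cdots x_r = f$ with $f$ of higher degree is a variety of finite index, in the sense of Mal'tsev and Kuz'min. In that setting, local finiteness is classical: a theorem of Kuz'min and Mal'tsev states that every variety of associative algebras over a finite field with an identity of this form is locally finite.

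I would try to prove this directly as follows. Consider the relatively free algebra $F_m$ of $\mathrm{var}_{\text{ungraded}}$ on $m$ generators, which maps onto $A$ once we forget the grading. Because $\mathbb{F}$ is finite, the ordinary variety generated by $\mathfrak{V}$ satisfies the identity $x_1\cdots x_r=f$. For $x_1=\cdots=x_r=x$, this gives $x^r = h(x)$ with $h$ of degree greater than $r$. Hence every element satisfies a polynomial relation $x^r = x^{r+1}g(x)$, so $x^r$ generates an idempotent-like element: $x^{r}$ lies in $x^{r+1}\mathbb{F}[x]$. Using the finiteness of $\mathbb{F}$, one gets $x^{n}=x^{n+t}$ for fixed $n,t$; this is periodicity, with the same argument as in Lemma \ref{lifting}(1). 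A finitely generated periodic PI algebra is finite by the Kaplansky–Shirshov theorem: PI plus algebraic of bounded degree implies finite dimensional. Here $A$ satisfies a nontrivial ordinary identity, so this applies and $A$ is finite.

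For the ``in particular'' part, a variety is generated by its finitely generated relatively free algebras. More directly, if $g\notin \mathrm{Id}_G$ of the finite algebras of $\mathfrak{V}$, then $g$ involves finitely many variables. The relatively free graded algebra of $\mathfrak{V}$ on those variables is finitely generated, hence finite by the above, and $g$ is not an identity of it. So the finite algebras generate $\mathfrak{V}$.

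The main obstacle is the periodicity-to-finiteness step. That is where Shirshov's height theorem, or the Kuz'min–Mal'tsev result, must be invoked.
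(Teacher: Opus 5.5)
Your proof is correct, but it takes a different route from the paper. The paper does not argue internally. It forgets the grading, notes that $\mathfrak{V}$ is contained in the ordinary variety $\mathfrak{U}$ defined by $x_1\cdots x_r=f$, and cites L'vov (Theorem 2.2 and Proposition 2.6 of his paper on varieties of associative rings) for the local finiteness of $\mathfrak{U}$. It cites L'vov's Proposition 1.1 for the ``in particular'' statement.

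You instead use only two consequences of the identity:
\begin{itemize}
\item the substitution $x_1=\cdots=x_r=x$ gives the polynomial $t^r-f(t,\dots,t)$, which is nonzero because $t^r$ cannot cancel against higher-degree terms and has zero constant term; so every element is algebraic of degree at most $\deg f$;
\item $x_1\cdots x_r-f$ is a nontrivial ordinary identity.
\end{itemize}
Kaplansky's theorem on algebraic PI-algebras (or the Jacobson--Kaplansky theorem for bounded degree, or Shirshov's height theorem) then makes every finitely generated algebra in $\mathfrak{V}$ finite dimensional, hence finite. You also prove the second statement directly via finitely generated relatively free algebras instead of citing it.

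Your route relies only on textbook PI theory and shows exactly where the hypothesis enters. It also shows that finiteness of $\mathbb{F}$ is used only in the final step from finite dimensional to finite; over any field the argument gives local finite dimensionality. The paper's route is shorter because L'vov's result is tailored to precisely this identity.

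In a write-up, drop the following:
\begin{itemize}
\item the false start about shortening words;
\item the attribution to Kuz'min--Mal'tsev, since the relevant reference is L'vov;
\item the periodicity detour through Lemma~\ref{lifting}, since algebraicity is all that Kaplansky's theorem needs.
\end{itemize}
Also fix the direction of the last paragraph. Start from $g\notin \mathrm{Id}_G(\mathfrak{V})$, and conclude that $g$ is not an identity of the subalgebra of the relatively free algebra generated by the variables of $g$. That subalgebra lies in $\mathfrak{V}$ and is finite, so $g$ is not an identity of the finite algebras of $\mathfrak{V}$.
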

\begin{proof}
Let $\mathfrak{U}$ be the variety of ordinary algebras determined by (\ref{identityv}). Then if we consider the algebras in $\mathfrak{V}$ as ordinary algebras we have $\mathfrak{V}\subseteq \mathfrak{U}$. Proposition 2.6 and Theorem 2.2 of \cite{Lvov} imply that $\mathfrak{U}$ is a locally finite variety of ordinary algebras. In particular $\mathfrak{V}$ is a locally finite variety of graded algebras. The last statement of the result follows from \cite[Proposition 1.1]{Lvov}.
\end{proof}

\begin{cor}\label{varsdirr}
Let $\mathbb{F}$ be a finite field. Let $\mathfrak{V}$ be a variety of $G$-graded $\mathbb{F}$-algebras and assume there exists $f(x_1,\dots, x_r)\in \mathbb{F}\langle X \rangle$ such that every monomial in $f$ has degree greater than $r$ and every algebra of $\mathfrak{V}$ satisfies the ordinary polynomial identity 
\begin{equation*}
x_1\cdots x_r=f(x_1,\dots, x_r).
\end{equation*}
Then $\mathfrak{V}$  is generated by the finite graded subdirectly irreducible algebras it contains.
\end{cor}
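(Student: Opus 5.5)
By Proposition \ref{locfin}, the variety $\mathfrak{V}$ is locally finite and is generated by its finite algebras. It is therefore enough to show that every finite algebra $A\in\mathfrak{V}$ lies in the variety $\mathfrak{W}$ generated by the finite graded subdirectly irreducible algebras of $\mathfrak{V}$. Since $\mathfrak{V}$ is generated by its finite algebras, we get $\mathfrak{V}\subseteq \mathfrak{W}$. The reverse inclusion is clear, because $\mathfrak{W}$ is generated by members of $\mathfrak{V}$.

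Fix a finite graded algebra $A\in\mathfrak{V}$. By Proposition \ref{sdirr}, $A$ is a subdirect product of graded subdirectly irreducible algebras $A_i$. Each $A_i$ is a homomorphic image of $A$ under a graded homomorphism, namely $A_i\cong A/I_i$ for graded ideals $I_i$ with $\bigcap_i I_i=0$. Consequently each $A_i$ is finite. Each $A_i$ also lies in $\mathfrak{V}$, since varieties are closed under graded homomorphic images.

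Because $A$ is finite, it has only finitely many graded ideals. Hence the family $\{I_i\}$ can be replaced by a finite subfamily with the same intersection, namely zero. Then $A$ embeds as a graded subalgebra into the finite direct product $\prod_{i} A/I_i$. This product is a graded algebra with the componentwise grading, and it lies in $\mathfrak{W}$. Since varieties are closed under subalgebras and products, $A\in\mathfrak{W}$.

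The main point needing care is that Birkhoff's subdirect decomposition from Proposition \ref{sdirr} produces factors that are quotients of $A$, so finiteness passes to them. I would check this directly rather than rely on the universal-algebra statement. Concretely, for each nonzero homogeneous $a\in A$, Zorn's lemma gives a graded ideal $I_a$ that is maximal with respect to $a\notin I_a$. This step is trivial here, since $A$ is finite. The quotient $A/I_a$ is graded subdirectly irreducible, because every nonzero graded ideal of it contains the image of $a$. Moreover $\bigcap_a I_a=0$, because a graded ideal is zero exactly when it contains no nonzero homogeneous element. This also makes the family finite from the outset.
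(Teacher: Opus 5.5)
Your argument is correct and is essentially the paper's proof: you use Proposition~\ref{locfin} to reduce to a finite $A\in\mathfrak{V}$, decompose $A$ subdirectly by Proposition~\ref{sdirr}, and observe that the factors are graded quotients of $A$, hence finite and in $\mathfrak{V}$. Your extra steps are valid but optional, since varieties are closed under arbitrary direct products anyway: these are passing to a finite subfamily, and re-deriving the decomposition via graded ideals maximal with respect to excluding a nonzero homogeneous $a$.
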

\begin{proof}
Let $\mathfrak{W}$ be the subvariety generated by the finite graded subdirectly irreducible algebras in $\mathfrak{V}$. Let $A$ be a finite algebra in $\mathfrak{V}$. Proposition \ref{sdirr} implies that $A$ is the subdirect product of a family $\{A_i\mid i\in I\}$ of graded subdirectly irreducible algebras. Since each $A_i$ is a homomorphic image of $A$ it follows that $A_i$ is a finite algebra in $\mathfrak{V}$. Hence $A_i\in \mathfrak{W}$ for all $i\in I$. Therefore $A\in \mathfrak{W}$. Proposition \ref{locfin} implies that $\mathfrak{V}\subseteq \mathfrak{W}$. The reverse inclusion is clear, therefore $\mathfrak{V}=\mathfrak{W}$.
\end{proof}

\subsection{Gradings on $M_2(\mathbb{K})$}

Let $G$ be a group with identity $e$. Henceforth, we assume that the support of the grading generates $G$. There are three elementary gradings to consider, one of which is the trivial grading and three division gradings, two when $\mathrm{char}\, \mathbb{K}\neq 2$ and one when $\mathrm{char}\, \mathbb{K} = 2$. Next we rewrite the main result of \cite{KBD}.

\begin{Theorem}\label{teorema do M2KGRADINS}
		Let $G$ be a group with identity element $e$, let $\mathbb{K}$ be a field, and $M = M_2(\mathbb{K})$. If the support of the grading generates $G$ then $M$ is isomorphic to one of the following gradings:
		
		\textnormal{(I)} Elementary gradings:
		
		\begin{enumerate}
			\item[(1)] The trivial grading, that is, $G=\{e\}$ and
			\[
			M_e = M.
			\]
			
			\item[(2)] $G=\langle g \rangle\cong \mathbb{Z}_2$ and
			\[
			M_e =
			\begin{pmatrix}
				\mathbb{K} & 0 \\
				0 & \mathbb{K}
			\end{pmatrix},
			\quad
			M_g =
			\begin{pmatrix}
				0 & \mathbb{K} \\
				\mathbb{K} & 0
			\end{pmatrix}.
			\]

            \item[(3)] $G=\langle g \rangle$, where $|G|>2$ and 
			\[
			M_e =
			\begin{pmatrix}
				\mathbb{K} & 0 \\
				0 & \mathbb{K}
			\end{pmatrix},
			\quad
			M_g =
			\begin{pmatrix}
				0 & \mathbb{K} \\
				0 & 0
			\end{pmatrix},
			\quad
			M_{g^{-1}} =
			\begin{pmatrix}
				0 & 0 \\
				\mathbb{K} & 0
			\end{pmatrix},
			\]
			\[
			M_h = 0 \text{ for } h \in G \setminus \{1,g,g^{-1}\},
			\]
			where $g \in G$ is an element of order greater than $2$.			
			
		\end{enumerate}
		
		\textnormal{(II)} Division gradings:

        \begin{enumerate}
        \item[(4)] $G=\langle g \rangle \cong \mathbb{Z}_2$, $\mathrm{char}\,\mathbb{K}\neq 2$ and 
			\[
			M_e =
			\left\{
			\begin{pmatrix}
				u & v \\
				bv & u
			\end{pmatrix}
			\;\middle|\; u,v \in \mathbb{K}
			\right\},
			\quad
			M_g =
			\left\{
			\begin{pmatrix}
				u & v \\
				-bv & -u
			\end{pmatrix}
			\;\middle|\; u,v \in \mathbb{K}
			\right\},
			\]
			where $b \in \mathbb{K} \setminus \mathbb{K}^2$.

		\item[(5)] $G=\langle g \rangle \cong \mathbb{Z}_2$, $\mathrm{char}\,\mathbb{K}= 2$ and
        
        \[ 
		M_e =
		\left\{
		\begin{pmatrix}
			u & u+v \\
			b(u+v) & v
		\end{pmatrix}
		\;\middle|\; u,v \in \mathbb{K}
		\right\},\]
        
		\[
		M_g =
		\left\{
		\begin{pmatrix}
			bu+v & u \\
			v & bu+v
		\end{pmatrix}
		\;\middle|\; u,v \in \mathbb{K}
		\right\},
		\]
		where
		\[
		b \in \mathbb{K} \setminus \{ x^2 + x \mid x \in \mathbb{K} \}.
		\]

         \item[(6)] $G=\langle g, h\rangle \cong \mathbb{Z}_2\times \mathbb{Z}_2$, $\mathrm{char}\,\mathbb{K}\neq 2$ and
			\[
			M_e = \mathbb{K} I_2, \quad M_g = \mathbb{K}X, \quad M_h = \mathbb{K}Y, \quad M_{gh} = \mathbb{K}XY,\]
			where $X,Y$ are invertible matrices such that
			\[
			X^2, Y^2 \in \mathbb{K} I_2
			\quad \text{and} \quad
			XY = -YX.
			\]
        \end{enumerate}   

    	\end{Theorem}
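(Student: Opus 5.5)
The strategy is to split the classification according to whether the identity component $M_e$ is ``large'' (elementary gradings) or one-dimensional over the centre (division gradings). Recall that every $G$-grading on $M_2(\mathbb{K})$ is graded simple. It is also graded Artinian, being finite dimensional. By \cite[Theorem 1.58]{NV} it is therefore isomorphic to $M_n(D)(\gamma)$ for some graded division algebra $D$ and tuple $\gamma$. Counting dimensions gives $n^2\dim D=4$, so either $n=2$ and $D$ is one dimensional, or $n=1$ and $D=M_2(\mathbb{K})$ is itself a graded division algebra. (The case $n=2$ with $D$ not concentrated in $e$ is impossible, since $\dim D=1$ forces $D=D_e=\mathbb{K}$.)

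\emph{Elementary case ($n=2$, $D=\mathbb{K}$).} With $\gamma=(g_1,g_2)$, the matrix unit $E_{ij}$ has degree $g_ig_j^{-1}$. Put $g=g_1g_2^{-1}$. The support is $\{e,g,g^{-1}\}$, and it generates $G=\langle g\rangle$.
\begin{itemize}
\item If $g=e$ we obtain (1).
\item If $g$ has order $2$ we obtain (2).
\item Otherwise $g\neq g^{-1}$ and we obtain (3).
\end{itemize}

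\emph{Division case ($n=1$).} Every nonzero homogeneous element is invertible, so $M_e$ is a division subalgebra of $M_2(\mathbb{K})$. Moreover, for $h$ in the support, $M_h=aM_e$ for any nonzero $a\in M_h$. Hence all nonzero components have the same dimension $d$ and $4=d\,|\mathrm{supp}|$. The support is a subgroup $T$ (it is closed under products and inverses because of invertibility), and $T=G$ since the support generates $G$.
\begin{itemize}
\item $d=4$ would mean $M_2(\mathbb{K})$ is a division algebra, which is impossible.
\item If $d=2$, then $|G|=2$ and $M_e$ is a quadratic field extension $L$ of $\mathbb{K}$ inside $M_2(\mathbb{K})$. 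Pick $X\in M_g$ invertible. Conjugation by $X$ is a nontrivial automorphism of $L$ (if it were trivial, $M$ would be commutative), so $X^2$ centralizes $L$ and has degree $e$, giving $X^2\in L$ and in fact $X^2\in\mathbb{K}$ after adjustment. When $\mathrm{char}\,\mathbb{K}\neq2$, write $L=\mathbb{K}[\sqrt b]$ with $b$ a non-square. Choosing a basis adapted to $\sqrt b$ gives the matrices of (4); the anticommuting element $X$ is realized as the matrices with $-u$ on the diagonal. When $\mathrm{char}\,\mathbb{K}=2$, $L$ is Artin--Schreier, generated by $t$ with $t^2+t=b$ where $b\notin\{x^2+x\}$. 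The automorphism is $t\mapsto t+1$, and one normalizes to obtain (5). This normalization step, producing the explicit matrix forms, is the main computational obstacle. I would handle it by fixing the rational canonical form of a generator of $L$ and solving linearly for $X$.
\item If $d=1$, then $|G|=4$ and $M_e=\mathbb{K}I_2$. Choose $X,Y$ spanning two of the non-identity components. Since $X^2,Y^2\in M_e$, the group has exponent $2$ unless some element has order $4$. If $G$ were cyclic of order $4$ generated by $X$, then $M=\mathbb{K}[X]$ would be commutative, a contradiction. Hence $G\cong\mathbb{Z}_2\times\mathbb{Z}_2$. The elements $XY$ and $YX$ lie in the same one-dimensional component, so $YX=\lambda XY$. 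Squaring degrees gives $\lambda^2=1$, and $\lambda\neq1$ by noncommutativity, so $\lambda=-1$. This forces $\mathrm{char}\,\mathbb{K}\neq2$, and we obtain (6).
\end{itemize}

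Finally, the gradings arising in the different cases are pairwise non-isomorphic, as one sees from the support and $\dim M_e$. The result is a restatement of \cite{KBD}, so one may also simply cite it.
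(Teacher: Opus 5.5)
The paper gives no proof of this theorem. It presents it as a rewriting of the main result of \cite{KBD} and relies on that citation. Your proposal instead derives the classification, and the outline is correct.

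\begin{itemize}
\item The graded Wedderburn--Artin theorem (\cite[Theorem 1.58]{NV}, already recorded in the preliminaries) together with $n^2\dim_{\mathbb{K}}D=4$ separates the elementary case from the division case.
\item In the elementary case, (1)--(3) are read off from the degree $g_1g_2^{-1}$ of $E_{12}$.
\item In the division case, $M_h=aM_e$ forces all components to have a common dimension $d$ with $4=d\,|G|$.
\end{itemize}

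Your route is self-contained for an arbitrary group and field, and it makes explicit the elementary/division dichotomy that the paper uses anyway when it sorts the graded simple algebras $M_n(D)$ in its varieties. The citation buys brevity and the explicit normal forms (4) and (5) without computation.

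Your plan for those normal forms does go through. Conjugate a generator of $L=M_e$ to $J=\left(\begin{smallmatrix}0&1\\ b&0\end{smallmatrix}\right)$, respectively to $T=\left(\begin{smallmatrix}1&1\\ b&0\end{smallmatrix}\right)$, whose characteristic polynomial is $x^2+x+b$. Every nonzero $X\in M_g$ induces the nontrivial automorphism $\sigma$ of $L$. Hence $M_g$ lies in, and by dimension equals, the two-dimensional space $\{X: X\ell=\sigma(\ell)X \text{ for all } \ell\in L\}$. Solving $XJ=-JX$, resp.\ $XT=(T+I)X$, gives exactly the sets in (4) and (5).

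A few points need tightening.

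\begin{itemize}
\item In characteristic $2$ over an imperfect field, a quadratic extension can be purely inseparable. What makes $L/\mathbb{K}$ Galois, hence Artin--Schreier, is your observation that conjugation by $X$ is a nontrivial $\mathbb{K}$-automorphism of $L$. Say this explicitly.
\item No ``adjustment'' is needed for $X^2\in\mathbb{K}$: since $X^2\in L$ commutes with $X$, it is fixed by $\sigma$. The stronger normalization $X^2=1$ would require knowing that $X^2$ is a norm from $L$, which your linear-algebra route makes unnecessary.
\item In the case $d=1$, the phrase ``since $X^2,Y^2\in M_e$'' is circular. First exclude $G\cong\mathbb{Z}_4$ by the commutativity of $\mathbb{K}[X]$. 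Then $\lambda^2=1$ follows from applying $X^{-1}YX=\lambda Y$ twice, using that $X^2$ is a scalar.
\item Your closing claim that the cases are distinguished by the support and $\dim M_e$ is false as stated. Gradings (2) and (4), or (2) and (5), both have support $\mathbb{Z}_2$ and $\dim M_e=2$; they are separated by whether $M_e$ is a field. The theorem asserts no non-isomorphism, so you can simply drop that sentence.
\end{itemize}
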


If $A=\oplus_{g\in G}A_g$ is a $G$-graded algebra and $\psi:G\rightarrow \tilde{G}$ is a monomorphism of groups then $A^{\psi}=\oplus_{\tilde{g}\in \tilde{G}}A_{\tilde{g}}$, where $A_{\tilde{g}}=A_g$ if $\tilde{g}=\psi(g)$ and $A_{\tilde{g}}=0$ if $\tilde{g}\notin \psi(G)$ is a $\tilde{G}$-grading on $A$. 

\begin{remark}\label{r13}   	
Let $S\subseteq \mathbb{K}\langle X_G\rangle$ and let $\tilde{S}=\tilde{S_1}\cup \tilde{S_2}$, where \[\tilde{S_1}=\{f(x_{1,\psi(g_1)},\dots, x_{n,\psi(g_n)})\mid f(x_{1,g_1},\dots, x_{n,g_n})\in S\}\] and \[\tilde{S_2}=\{x_{1,\tilde{h}}\mid \tilde{h}\in \tilde{G}\setminus \psi(G)\}.\] If $S$ is a basis for $Id_G(A)$ then $\tilde{S}$ is a basis for $Id_{\tilde{G}}(A^{\psi})$.
\end{remark}

The previous remark allows us to simplify the notation in order to determine bases for the graded identities of the matrix algebra of order two. For the trivial grading we consider $G=\{e\}$ and the identities are simply the ordinary identities. For the elementary grading in (2) and the division gradings in (4) and (5), we assume that $G=\mathbb{Z}_2$. For the division grading in (6) we assume that $G=\mathbb{Z}_2\times \mathbb{Z}_2$. For the elementary grading (3) we assume that $G=\langle g \rangle$ is a cyclic group of arbitrary order, with multiplicative notation. 

\section{Main Results}\label{results}

In this section $\mathbb{F}$ denotes a field with $q$ elements. We present here the main results of the paper, namely, a basis for the graded identities of $M_2(\mathbb{F})$ with an arbitrary group grading. We assume that the support of the grading generates the group. 

A basis for the ordinary identities of $M_2(\mathbb{F})$ was determined by Yu. N. Mal'tsev, E. N. Kuz'min in \cite{MK}. We will need the following notation:

\begin{notation}
Let $a,b$ be elements of an algebra $A$, then we define $[a,b]:=ab-ba$ and $a\circ b:=ab+ba$.
\end{notation}

\begin{Theorem}\cite{MK}
Let $\mathbb{F}$ be a field with $q$ elements. The ideal of identities of the algebra $M_2(\mathbb{F})$ is generated by the polynomials 
	 $$(x_{1}-x_{1}^{q})(x_{2}-x_{2}^{q^{2}})(1-[x_{1},x_{2}]^{q-1})=0$$
     and
	 $$(x_{1}-x_{1}^{q})\circ(x_{2}-x_{2}^{q})-((x_{1}-x_{1}^{q})\circ(x_{2}-x_{2}^{q}))^{q}=0.$$
\end{Theorem}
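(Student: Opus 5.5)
The plan is to show two things: both displayed polynomials are identities of $M_2(\mathbb{F})$, and every algebra satisfying them lies in $\mathrm{var}(M_2(\mathbb{F}))$. Let $\mathfrak{V}$ be the variety determined by the two polynomials, taken with the trivial grading.

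\textbf{Step 1: the polynomials are identities.} Let $a\in M_2(\mathbb{F})$. If $a$ is diagonalizable over $\mathbb{F}$, or nilpotent plus scalar, then $a-a^q$ is nilpotent (possibly zero). If the characteristic polynomial of $a$ is irreducible, then $a$ generates a copy of $\mathbb{F}_{q^2}$, so $a^{q^2}=a$. For the first identity we split into cases.
\begin{itemize}
\item If $x_1$ generates a field, then $x_1$ and $[x_1,x_2]$ interact so that $[x_1,x_2]^{q-1}$ acts as the identity whenever $[x_1,x_2]\neq 0$. This uses the structure $M_2=\mathbb{F}_{q^2}\oplus\mathbb{F}_{q^2}j$.
\item If $[x_1,x_2]=0$, both elements lie in a commutative subalgebra.
\item Otherwise we compute $(x_1-x_1^q)(x_2-x_2^{q^2})$ directly.
\end{itemize}
For the second identity, $u=x_1-x_1^q$ and $v=x_2-x_2^q$ are traceless, since $\mathrm{tr}(a^q)=\mathrm{tr}(a)^q=\mathrm{tr}(a)$ holds for all $2\times 2$ matrices via the eigenvalues. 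Hence $u\circ v=\mathrm{tr}(uv)I$ is a scalar, and therefore fixed by Frobenius.

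\textbf{Step 2: $\mathfrak{V}$ is generated by its finite subdirectly irreducible members.} We aim to apply Corollary \ref{varsdirr}. To do so we need an ordinary identity of the form $x_1\cdots x_r=f$ holding in $\mathfrak{V}$, with every monomial of $f$ of degree greater than $r$. Setting $x_2=x_1$ in the first polynomial kills the commutator, so the term $[x_1,x_2]^{q-1}$ drops out. This gives $(x_1-x_1^q)(x_1-x_1^{q^2})=0$, which reads $x_1^2=(\text{higher-degree terms})$. Full linearization of this is problematic over a finite field, but Lvov's theorems only need a one-variable identity of the form $x^n=x^{n+1}f(x)$ together with a suitable multilinear consequence. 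As a fallback, we use the first identity with $x_1,x_2$ replaced by products to obtain the required identity.

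\textbf{Step 3: classify the finite subdirectly irreducible members.} Let $A$ be a finite subdirectly irreducible algebra in $\mathfrak{V}$. By Theorem \ref{decomposição estrutural de álgebras semi primas graduadas}, $A/J(A)$ is a direct sum of simple algebras $M_n(\mathbb{F}_{q^k})$. Substituting matrix units shows the identities force $n\le 2$, with $k\le 2$ when $n=1$ and $k=1$ when $n=2$. We then use Lemma \ref{lifting} to lift idempotents and matrix units.
\begin{itemize}
\item If $M_2(\mathbb{F})$ appears, lifting its matrix units gives $A\cong M_2(R)$ for some $R$, which is forced to be $\mathbb{F}$, or else $J=0$.
\item Otherwise $A/J$ is commutative, a sum of copies of $\mathbb{F}$ and $\mathbb{F}_{q^2}$. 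Here the second identity, together with the first identity in the case $x_2-x_2^{q^2}=0$, restricts $J$. The aim is to show $A$ embeds in, or is a quotient of a subalgebra of, $M_2(\mathbb{F})$, or is a direct product of such algebras. Typical cases are upper triangular matrices and $\mathbb{F}[t]/(t^2)$.
\end{itemize}

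\textbf{Main obstacle.} The hardest part is this last analysis of the radical in the non-semisimple case. One must show the identities force $J^2=0$ and constrain how the idempotents act on $J$, so that $A$ lands in $\mathrm{var}(M_2(\mathbb{F}))$. My first attempt would be Peirce decomposition with respect to the lifted idempotents, analyzing each block $e_iJe_j$ via substitutions $x_1=e_i+j$ into the first identity.
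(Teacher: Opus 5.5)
The paper does not prove this theorem; it quotes it from \cite{MK}, so your proposal has to stand on its own. Its outline (verification, reduction via Corollary \ref{varsdirr}, the splitting $A=B\dotplus J$, then a case analysis) is the template the paper uses for Theorem \ref{13}. However, the only substantial step, the non-semisimple part of Step 3, is not carried out: you stop at ``the aim is to show'' and list it as the main obstacle. Two things you do say are also off.

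\emph{Step 2.} No specialization or linearization is needed. The first polynomial is already $x_1x_2-f(x_1,x_2)$ with every monomial of $f$ of degree at least $3$, which is exactly the hypothesis of Corollary \ref{varsdirr} with $r=2$; the paper uses (\ref{ordm2}) in this way. Setting $x_2=x_1$ throws that form away.

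\emph{Step 3, the $M_2(\mathbb{F})$ case.} Lifting the matrix units of an $M_2(\mathbb{F})$ summand only gives $eAe\cong M_2(e_{11}Ae_{11})$ for the lifted idempotent $e$, not $A\cong M_2(R)$. Nothing you say controls $eA(1-e)$, $(1-e)Ae$ or the other summands.

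The missing argument is short once you see that the same identity gives $J^2\subseteq J^3$, hence $J^2=0$. Every term with two factors from $J$ then vanishes, so the identities can be tested on pairs (element of a summand, $j\in J$).
\begin{enumerate}
\item For an $M_2(\mathbb{F})$ summand, $x_1=e_{12}$, $x_2=j\in e_{22}J$ reduces the first identity to $e_{12}j=0$, so $j=e_{21}e_{12}j=0$. Symmetrically, $x_1=j\in Je_{11}$, $x_2=e_{12}$ gives $je_{12}=0$. Hence $eJ=Je=0$.
\item For an $\mathbb{F}_{q^2}$ summand with unit $e$, take $\ell\in eAe$ lifting a generator, so $u=\ell-\ell^q$ is invertible in $eAe$. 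Then $x_1=\ell$, $x_2=j\in eJ$ gives $uj=0$, so $eJ=0$.
\item The right action of an $\mathbb{F}_{q^2}$ summand is exactly what the first identity cannot detect. This is the role of the second identity, which your proposal never pins down. For $j\in Je$, using $eJ=0$ and $j^q=0$, one gets $u\circ j=ju$ and $(ju)^q=0$, so $ju=0$ and $Je=0$. For instance, $\begin{pmatrix}\mathbb{F}&\mathbb{F}_{q^2}\\0&\mathbb{F}_{q^2}\end{pmatrix}$ satisfies the first identity but not the second.
\end{enumerate}
So if $J\neq0$, a summand of type $M_2(\mathbb{F})$ or $\mathbb{F}_{q^2}$ would be an ideal meeting $J$ trivially, contradicting subdirect irreducibility. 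All summands are therefore copies of $\mathbb{F}$, and Cases 1--3 of the proof of Theorem \ref{13} give $\dim J=1$ and an embedding into $M_2(\mathbb{F})$.

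Minor points:
\begin{itemize}
\item In Step 1, $[x_1,x_2]^{q-1}$ need not equal $I$ when $x_1$ generates $\mathbb{F}_{q^2}$. It can be $-I$ for odd $q$, and $I$ plus a nonzero nilpotent in characteristic $2$. What is true, and suffices, is that $x_2-x_2^{q^2}\neq0$ forces $x_2=\lambda+n$ with $n^2=0$, and then $n\bigl(1-[x_1,n]^{q-1}\bigr)=0$.
\item Lemma \ref{lifting} requires a unit, so you must work in $A^{\sharp}$ as the paper does.
\end{itemize}
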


Henceforth we consider the non-trivial gradings on $M_2(\mathbb{F})$. When $\mathrm{char}\, \mathbb{F}\neq 2$ basis for graded identities for the gradings (2) and (4) in Theorem \ref{teorema do M2KGRADINS}, were determined in \cite{KA}. As a consequence of Theorem \ref{teorema do M2KGRADINS} and Remark \ref{r13} there are four gradings we need to consider in this paper, this is done in the remainder of this section.

\subsection{Elementary $\mathbb{Z}_2$-grading on $M_2(\mathbb{F})$}

In this section we determine a basis for the graded identities of $M=M_2(\mathbb{F})$ with the following elementary $\mathbb{Z}_2$-grading:

\begin{equation}\label{El2}
			M_0 =
			\begin{pmatrix}
				\mathbb{F} & 0 \\
				0 & \mathbb{F}
			\end{pmatrix},
			\quad
			M_1 =
			\begin{pmatrix}
				0 & \mathbb{F} \\
				\mathbb{F} & 0
			\end{pmatrix}.
\end{equation}

The basis for the graded identities of $M$ that we present next was determined by P. Koshlukov and S. S. Azevedo when $\mathrm{char}\,\mathbb{F}\neq 2$. The following notation will be used in this subsection:

\begin{notation}
We denote by $y_i$ and $z_i$ the indeterminates $x_{i,0}$ and $x_{i,1}$  of $X_{G}$, respectively.
\end{notation}

\begin{Theorem}
Let $\mathbb{F}$ be a field of characteristic different from $2$ with $q$ elements. The ideal of graded identities of the algebra $M_2(\mathbb{F})$ with the grading (\ref{El2}) is generated by the polynomials
	 $$y_{1}^{q}-y_{1}=0$$
     and 
	 $$(X_{1}-X_{1}^{q})(X_{2}-X_{2}^{q^{2}})(1-[X_{1},X_{2}]^{q-1})=0,$$  where $X_{i}=z_{i}+y_{i}$ for $i=1,2$.
\end{Theorem}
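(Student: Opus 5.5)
The plan is to prove the two inclusions between $Id_G(M)$ and the $T_G$-ideal $I$ generated by the two polynomials. Equivalently, I would compare $\mathrm{var}(M)$ with the variety $\mathfrak{V}$ of $\mathbb{Z}_2$-graded algebras determined by $y_1^q-y_1$ and $h(X_1,X_2)=(X_1-X_1^q)(X_2-X_2^{q^2})(1-[X_1,X_2]^{q-1})$.

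\textbf{The inclusion $I\subseteq Id_G(M)$.} This is immediate. Elements of $M_0$ are diagonal matrices with entries in $\mathbb{F}$, so $a^q=a$. Moreover $y_i+z_i$ ranges over all of $M_2(\mathbb{F})$, so $h$ vanishes on $M$ because the first Mal'tsev--Kuz'min polynomial is an ordinary identity of $M_2(\mathbb{F})$.

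\textbf{Reduction to finite graded subdirectly irreducible algebras.} For the reverse inclusion I would use Corollary \ref{varsdirr}. Every element of an algebra $A\in\mathfrak{V}$ is a sum $y+z$ of homogeneous components. Hence $A$ satisfies the ordinary identity $h(x_1,x_2)=0$. Expanding it gives $x_1x_2=f(x_1,x_2)$, where every monomial of $f$ has degree at least $3$, so the hypothesis of the corollary holds with $r=2$. Therefore $\mathfrak{V}$ is generated by its finite graded subdirectly irreducible algebras, and it suffices to show that every such $A$ lies in $\mathrm{var}(M)$.

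\textbf{Structure of a finite graded subdirectly irreducible $A\in\mathfrak{V}$.}
1. The identity $y^q=y$ shows that $A_0$ has no nonzero nilpotent elements. Hence $J=J^{gr}(A)$, which is a nilpotent graded ideal, satisfies $J_0=0$. Then $J=J_1$ and $J^2\subseteq J_0=0$.
2. By Theorem \ref{decomposição estrutural de álgebras semi primas graduadas}, $A/J$ is a finite direct sum of graded simple algebras $M_n(D)(\gamma)$.
3. The condition $y^q=y$ on the neutral component forces $D_0=\mathbb{F}$, and with $n$ and $D$ constrained by $h$ (no matrix algebra larger than $M_2$ over $\mathbb{F}$ satisfies it) the possible summands should be: $\mathbb{F}$ with trivial grading, $M_2(\mathbb{F})$ with grading (\ref{El2}), and $\mathbb{F}\oplus\mathbb{F}$ with $\mathbb{Z}_2$ acting as a swap-type graded division algebra $D=\mathbb{F}[t]$, $t^2=\lambda\neq 0$. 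Each of these embeds in $M$ as a graded subalgebra (for the last one, via $\begin{pmatrix}0&\lambda\\1&0\end{pmatrix}$), so each lies in $\mathrm{var}(M)$.
4. Next I would lift the homogeneous idempotents and matrix units with Lemma \ref{lifting}, which applies since $J$ is nilpotent. After adjoining a unit if necessary, this gives a Peirce decomposition of $A$ with respect to lifted orthogonal idempotents in $A_0$.

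\textbf{Main obstacle: controlling $J$.} The hardest step is to show that $J=0$, or at least that $J$ only appears in configurations realizable inside $M$ or $M\times M$. Subdirect irreducibility then gives a unique minimal graded ideal, and this should pin down $A$ as one of the graded simple algebras above.

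To do this I would substitute into $h$ elements $X_1=e+w$, with $e$ a lifted idempotent and $w\in J=J_1$, together with suitable $X_2$. Since $J^2=0$ and $J_0=0$, the identity linearizes in $w$ and should yield relations such as $ewe'=0$ for lifted idempotents $e,e'$ lying in incompatible blocks. Combined with $y^q=y$ applied to $(z+z')^2$-type even elements, I expect this to force $J$ to annihilate $A$ on both sides. The hypothesis $\mathrm{char}\,\mathbb{F}\neq 2$ should enter when dividing by $2$ to separate symmetric parts, for instance in $w\circ z$.

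A nonzero $J$ with $AJ=JA=0$ would be a trivial graded ideal. In a subdirectly irreducible algebra this forces $A$ to be essentially a one-dimensional odd nilpotent algebra, and such an algebra is excluded or handled directly: it satisfies $z_1z_2=0$, which fails in $M$, so I would check that $h$ itself excludes it. This case analysis is where I expect most of the work, and the step most likely to need adjusting.
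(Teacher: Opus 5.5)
\textbf{Verdict: there is a genuine gap, in the step you call the main obstacle.}

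The first half of your proposal is correct. It matches the paper's proof of the characteristic-$2$ analogue, Theorem~\ref{13}; the present statement is only quoted from Koshlukov--Azevedo and not proved in the paper. The identity $h$ gives $x_1x_2=f(x_1,x_2)$ with every monomial of $f$ of degree at least $3$. Corollary~\ref{varsdirr} then reduces the problem to finite graded subdirectly irreducible $A\in\mathfrak{V}$. Finally, $y^q=y$ gives $J_0=0$, $J=J_1$ and $J^2=0$.

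The target of your main step is false. You aim to force $J=0$ or $AJ=JA=0$, so that $A$ is one of the graded simple algebras. Consider $A=\mathbb{F}E_{11}+\mathbb{F}E_{22}+\mathbb{F}E_{12}\subseteq M$.
\begin{itemize}
\item It is a graded subalgebra of $M$, so it lies in $\mathfrak{V}$.
\item It is graded subdirectly irreducible, with unique minimal graded ideal $J=\mathbb{F}E_{12}$.
\item It satisfies $AJ\neq 0\neq JA$.
\end{itemize}
So no substitution into $h$ can yield relations such as $ewe'=0$ that annihilate $J$.

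Your handling of the case $AJ=JA=0$ also reverses the logic. The resulting one-dimensional odd algebra with zero product is isomorphic to $\mathbb{F}E_{12}\subseteq M$, so it \emph{belongs} to $\mathrm{var}(M)$. That it satisfies $z_1z_2$ only says $\mathrm{var}(A)\subsetneq\mathrm{var}(M)$, which is harmless. And $h$, being an identity of $M$, cannot exclude a subalgebra of $M$.

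\textbf{The missing idea.} The right goal is to show that every finite graded subdirectly irreducible $A\in\mathfrak{V}$ is isomorphic to a graded subalgebra of $M$. This needs only $y^q=y$ and subdirect irreducibility. The identity $h$ is used only for local finiteness, and $\mathrm{char}\,\mathbb{F}\neq 2$ plays no role.

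First write $A=B\dotplus J$, with $B$ a sum of copies of $\mathbb{F}$, $M$ and $\mathbb{F}[t]$, where $t^2=\lambda$. The lift $T$ of $t$ satisfies $T^2-\lambda e\in J_0=0$, so it still squares to $\lambda e$. It is $y^q=y$, applied to an even $dE_{12}$ as in Lemma~\ref{lema da variedade}, and not $h$, that rules out $M_2(D)$ with $\mathrm{supp}\,D=\mathbb{Z}_2$. Indeed, for $D\cong\mathbb{F}\times\mathbb{F}$ that algebra satisfies $h$.

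Now suppose $J\neq 0$.
\begin{enumerate}
\item We have $A_1J+JA_1\subseteq J_0=0$. Hence an odd $x\in B$ generates an ideal contained in $B$, which meets $J$ trivially. Subdirect irreducibility forces $x=0$.
\item Therefore $A_1=J$ and $A_0=B=\mathbb{F}e_1\oplus\cdots\oplus\mathbb{F}e_s$.
\item Each $e_iJ$ and $Je_j$ is a graded ideal. Subdirect irreducibility gives, up to left--right symmetry, one of three cases: $J=e_1J$ with $JA=0$; $J=e_1Je_1$; or $J=e_1Je_2$.
\item In each case every line in $J$ is an ideal, so $\dim J=1$.
\item An $e_i$ acting trivially on $J$ would span an ideal missing $J$, so $s\leq 2$.
\end{enumerate}
Thus $A$ is either $\mathbb{F}e+\mathbb{F}n$ with $e\mapsto E_{11}$, $E_{22}$ or $I_2$, or the upper triangular algebra, always with $n\mapsto E_{12}$.

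The remaining cases are easy. If $A=J$, then $A\cong\mathbb{F}E_{12}$. If $J=0$, then $A$ is one of your three graded simple algebras, each of which embeds in $M$. This completes the argument.
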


Next we determine a basis for the graded identities of $M_2(\mathbb{F})$, with the grading (\ref{El2}), under the assumption that $\mathbb{F}$ has characteristic $2$. For this reason we make the following:

\vspace{0,2cm}

\noindent{\bf Convention:} Until the end of this subsection we assume that $\mathrm{char}\,\mathbb{F}=2$ and the grading group is $\mathbb{Z}_2$.

\begin{lemma}\label{lema da variedade}
	Let $\mathbb{F}$ be a finite field of characteristic $2$ and let $q=|\mathbb{F}|$. Let $C=M_{n}(D)$ be a graded simple algebra in the variety determined by $y_1^{q}-y_1=0$. Then $C$ is isomorphic to $\mathbb{F}$, $ M_{2}(\mathbb{F})$ with the grading $M$ in (\ref{El2}) or to $ \mathbb{F}[\mathbb{Z}_{2}] $ with the canonical grading. 
\end{lemma}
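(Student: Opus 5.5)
The plan is to analyse first the graded division algebra $D$ and then the size $n$ of the matrix algebra, using only the identity $y_1^q=y_1$ on the neutral component. Since $C=M_n(D)(\gamma)$ satisfies $y_1^q-y_1=0$, every element of $C_0$ satisfies $a^q=a$. Two facts about $C_0$ follow. First, $C_0$ is reduced: if $a\in C_0$ is nilpotent then $a=a^{q^k}=0$ for $k$ large. Second, $C_0$ is commutative: it is a finite dimensional algebra in which every element satisfies $x^q=x$, so by Jacobson's theorem (or directly, since $C_0$ is a reduced finite dimensional $\mathbb{F}$-algebra, hence a product of finite fields, each a subfield of $\mathbb{F}_q$ because its elements are roots of $x^q-x$) we get $C_0\cong \mathbb{F}^m$ for some $m$.

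\textbf{Step 1: the graded division algebra.} $D_0$ is a division algebra contained, via the scalar embedding $d\mapsto dI$ on the block $E_{11}$, in $C_0$ up to the corner $E_{11}CE_{11}$. Every element of $D_0$ satisfies $x^q=x$. So $D_0$ is a finite field all of whose elements are roots of $x^q-x$, forcing $D_0=\mathbb{F}$. The support of $D$ is a subgroup $H$ of $\mathbb{Z}_2$. If $H=0$ then $D=\mathbb{F}$. If $H=\mathbb{Z}_2$, pick $0\neq u\in D_1$. Then $D_1=D_0u=\mathbb{F}u$ and $u^2\in D_0^{\times}=\mathbb{F}^\times$. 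Because $\mathrm{char}\,\mathbb{F}=2$ and $\mathbb{F}$ is finite (hence perfect), $u^2=\lambda^2$ for some $\lambda\in\mathbb{F}$. Replacing $u$ by $\lambda^{-1}u$ we may assume $u^2=1$, which gives $D\cong\mathbb{F}[\mathbb{Z}_2]$ with the canonical grading. (Note that this is not a division algebra as an ungraded algebra, since $(1+u)^2=0$, but it is a graded division algebra.)

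\textbf{Step 2: bounding $n$.} The diagonal idempotents $E_{11},\dots,E_{nn}$ lie in $C_0$, and $C_0$ contains the blocks $D_{g_i^{-1}g_j}E_{ij}$ whenever $g_i=g_j$ (more precisely, $dE_{ij}$ with $\|d\|=g_i^{-1}g_j$ after adjusting for the group being abelian). I split into the two cases for $D$.

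If $D=\mathbb{F}[\mathbb{Z}_2]$, then for every pair $i,j$ there is a degree-zero element of the form $dE_{ij}$ with $d\in\{1,u\}$. For $n\ge 2$, the element $uE_{12}$ or $E_{12}$ is a nonzero nilpotent in $C_0$, which contradicts reducedness. Hence $n=1$ and $C=\mathbb{F}[\mathbb{Z}_2]$.

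If $D=\mathbb{F}$, then $C$ is an elementary grading determined by $\gamma\in\mathbb{Z}_2^n$. If two entries of $\gamma$ coincide, say $g_i=g_j$ with $i\neq j$, then $E_{ij}\in C_0$ is a nonzero nilpotent, which is again a contradiction. So the $g_i$ are pairwise distinct, giving $n\le 2$. For $n=1$ we get $C=\mathbb{F}$. For $n=2$ we have $\gamma=(0,1)$ up to shift, which is exactly the grading (\ref{El2}).

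\textbf{Main obstacle.} The main subtlety is making sure that $D_0=\mathbb{F}$, i.e. that $D_0$ actually embeds in $C_0$ so that the identity applies to it. This holds because $dE_{11}$ has degree $g_1\|d\|g_1^{-1}=\|d\|$. The rest is bookkeeping with degrees, where the group $\mathbb{Z}_2$ being abelian simplifies $\|dE_{ij}\|=g_i\|d\|g_j^{-1}$.
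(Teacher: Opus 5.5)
Your proof is correct and follows essentially the same route as the paper. Both arguments get $D_0=\mathbb{F}$ from the corner $D_0E_{11}\subseteq C_0$, normalize $u\in D_1$ to $u^2=1$ using that squaring is surjective on $\mathbb{F}$ in characteristic $2$, and rule out larger $n$ by exhibiting a nonzero square-zero element of $C_0$, which $y_1^q=y_1$ forbids. The differences are only ordering and a side remark: the paper bounds $n\le 2$ first via $E_{12},E_{23},E_{13}$ before identifying $D$, whereas you identify $D$ first and then bound $n$ case by case (pigeonhole on $\gamma$, resp.\ choosing $d\in\{1,u\}$), and your remark that $C_0\cong\mathbb{F}^m$ is never actually used.
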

\begin{proof}
	We prove first that $n\leq 2$. Assume, on the contrary, that $n\geq 3$. If $c\in\{E_{12},E_{23}\}\cap C_0$ then $c^{2}=0$ and $c^{q}=c$, hence $c=0$, which is a contradiction. Then $E_{12},E_{23}\in C_{1}$, hence $E_{13}=E_{12}\cdot E_{23}\in C_{0}$ which again leads to a contradiction.
	Note that $y^{q}-y=0$ is also an identity for $D_{0}$. Together with Wedderburn's Theorem on finite division rings this implies that $D_{0}=\mathbb{F}$. 
Let $T=\mathrm{supp}\, D$. We consider first the case $T=\mathbb{Z}_2$. Let $u'\in D_{1}-\{0\}$. Note that $(u')^{2}\in D_0=\mathbb{F}$. Moreover, since $\mathrm{char}\,\mathbb{F}=2$ there exists $\lambda\in\mathbb{F}^{\times}$, such that $(u')^{2}=\lambda^{2}$. Let $u=\lambda^{-1}u^{'}$, then $u\in D_{1}$ and $u^{2}=1$. Hence $D=\mathbb{F}\oplus \mathbb{F}u\simeq \mathbb{F}[\mathbb{Z}_2]$. If $n=2$ then $E_{12}\in C_{1} $ and $uE_{12}\in C_{0}$. Then $0=(uE_{12})^{q}=uE_{12}$, a contradiction. Hence $n=1$ and $ C\simeq \mathbb{F}[\mathbb{Z}_{2}] $. Now assume that $T=\{0\}$, in this case $D=\mathbb{F}$. Since $n\leq 2$ we conclude that $C= \mathbb{F}$ or $C= M_{2}(\mathbb{F})$. The grading on $C= M_{2}(\mathbb{F})$ is elementary and the identity $y^{q}-y=0$ implies it is the one in (\ref{El2}).
\end{proof}




\begin{lemma}\label{decomposição COM unidade}
	Let $\mathbb{F}$ be a finite field of characteristic $2$ and let $q=|\mathbb{F}|$. Let $A=A_{0}\oplus A_{1}$ be a unital $\mathbb{Z}_{2}$-graded algebra of finite dimension, moreover assume that $y^{q}-y=0$ is a graded identity for $A$. Then $A=B\dotplus J^{gr}(A)$, where $B=B_{1}\oplus\cdots \oplus B_{r}$ and $[B_{i}]\in \{[\mathbb{F}],[\mathbb{F}[\mathbb{Z}_{2}]],[M]\}$ for all $i=1,\dots, r$.
\end{lemma}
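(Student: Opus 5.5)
The plan is a graded Wedderburn--Malcev style splitting, obtained from the semisimple quotient by lifting idempotents and matrix units with Lemma \ref{lifting}. Put $J=J^{gr}(A)$ and $\bar A=A/J$. Since $A$ is finite dimensional, $J$ is a nilpotent graded ideal, so it is graded-nil, and $J^{gr}(\bar A)=0$. By Theorem \ref{decomposição estrutural de álgebras semi primas graduadas}, $\bar A=\bar B_1\oplus\cdots\oplus\bar B_r$ with each $\bar B_i$ graded simple. The identity $y^q-y=0$ passes to quotients, so each $\bar B_i$ lies in the variety it determines. Each $\bar B_i$ is finite dimensional, hence graded Artinian, so $\bar B_i\cong M_{n}(D)$. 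Lemma \ref{lema da variedade} then gives $[\bar B_i]\in\{[\mathbb{F}],[\mathbb{F}[\mathbb{Z}_2]],[M]\}$.

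The next step is to lift this structure to $A$. The unit elements $\bar 1_i$ of the $\bar B_i$ are orthogonal homogeneous idempotents of degree $0$. Since $A$ is unital and $J$ is graded-nil, part (2) of Lemma \ref{lifting} lifts them to orthogonal homogeneous idempotents $f_1,\dots,f_r\in A_0$. We then build a graded subalgebra $B_i\subseteq f_iAf_i$ mapping isomorphically onto $\bar B_i$, treating the three types separately.
\begin{itemize}
\item If $\bar B_i\cong\mathbb{F}$, take $B_i=\mathbb{F}f_i$.
\item If $\bar B_i\cong M$, note that $M$ has homogeneous matrix units $E_{11},E_{22}$ of degree $0$ and $E_{12},E_{21}$ of degree $1$. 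Apply part (3) of Lemma \ref{lifting} inside the unital graded algebra $f_iAf_i$, whose ideal $f_iJf_i$ is graded-nil; this needs the lifted $e_{11},e_{22}$ to sum to $f_i$, which we arrange by lifting $\bar E_{11}$ and $\bar 1_i-\bar E_{11}$ with part (2) inside $f_iAf_i$. Let $B_i$ be the $\mathbb{F}$-span of the lifted units. It is a graded subalgebra isomorphic to $M$, because the units have the correct degrees.
\item If $\bar B_i\cong\mathbb{F}[\mathbb{Z}_2]$, we need an element $u\in f_iA_1f_i$ with $u^2=f_i$ lifting the generator $\bar u$.
\end{itemize}

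The third case is the main obstacle. Choose any homogeneous lift $a\in f_iA_1f_i$ of $\bar u$. Then $a^2=f_i+j$ with $j\in f_iJ_0f_i$, and $j$ commutes with $a$. In characteristic $2$ the equation $(f_i+j)^{1/2}$ makes sense via Frobenius tricks. Because $j$ is nilpotent, $j^{2^k}=0$ for large $k$, so $a^{2^{k+1}}=(f_i+j)^{2^k}=f_i+j^{2^k}=f_i$. My first attempt is to set $u=a^{m}$ with $m$ odd and $m\equiv 1$ modulo a suitable power of $2$, and to check $u^2=f_i$. Concretely, with $N=2^{k+1}$ we have $a^N=f_i$, so $a$ is invertible in $f_iAf_i$ with finite multiplicative order $N$, a power of $2$. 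Write $a=f_i\cdot a$ and look for $c=\sum\lambda_t j^t$ in the commutative subalgebra $\mathbb{F}[j]\subseteq f_iA_0f_i$ with $(ac)^2=f_i$, that is $c^2=(f_i+j)^{-1}$. In characteristic $2$ squaring in $\mathbb{F}[j]$ is the Frobenius map, and it is bijective only on spans of powers of $j^2$. So I would instead use the identity $y^q=y$ on degree-$0$ elements. Both $a^2$ and its images are degree $0$, which forces $(f_i+j)^q=f_i+j$, while also $(f_i+j)^q=f_i+j^q$. Comparing gives $j=j^q$, so $j=j^{q^m}=0$ for large $m$ by nilpotency. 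Hence $a^2=f_i$ already, and we take $B_i=\mathbb{F}f_i+\mathbb{F}a$, which is graded isomorphic to $\mathbb{F}[\mathbb{Z}_2]$.

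Finally, set $B=B_1\oplus\cdots\oplus B_r$. This sum is direct and is a subalgebra, since $B_i\subseteq f_iAf_i$ and the $f_i$ are orthogonal. It is graded and maps isomorphically onto $\bar A$, so $B\cap J=0$ and $A=B\dotplus J$.
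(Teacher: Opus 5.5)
Your proof is correct and is essentially the paper's own argument. You decompose $A/J^{gr}(A)$ with the structure theorem for algebras with zero graded radical together with Lemma~\ref{lema da variedade}, lift the unit elements of the components to orthogonal homogeneous idempotents $f_i$ via Lemma~\ref{lifting}, lift matrix units inside $f_iAf_i$ for the copies of $M$, and use the identity $y^q=y$ to show that a homogeneous lift $a$ of the generator of $\mathbb{F}[\mathbb{Z}_2]$ already satisfies $a^2=f_i$.

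The only difference is cosmetic. The paper applies $y^q=y$ directly to the nilpotent degree-$0$ element $j=a^2-f_i\in J_0$, so $j=j^{q^m}=0$ immediately. That makes your detour through $(f_i+j)^q=f_i+j^q$ unnecessary, and the abandoned ``first attempt'' (with $a^{2^{k+1}}=f_i$ and $c^2=(f_i+j)^{-1}$) is best deleted.
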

\begin{proof}
	Let $J=J^{gr}(A)$. It follows from Theorem \ref{decomposição estrutural de álgebras semi primas graduadas}  and Lemma \ref{lema da variedade} that
	$$A/J= L_{1}\oplus L_{2}\oplus\cdots\oplus L_{r},$$ where for all $i=1,\dots, r$ we have $L_i\cong \mathbb{F}$ or $L_i\cong M$, or $ L_i\cong \mathbb{F}[\mathbb{Z}_{2}] $.
	 Let $f_{1},\ldots,f_{r}\in Q= A/J$, where $f_{i}$ is the unity of $L_{i}$ for $i=1,\dots, r$. Lemma \ref{lifting} implies that there exist homogeneous orthogonal idempotents $e_{1},\ldots,e_{r}\in A$ such that $\overline{e_i}=f_i$ for all $i=1,\dots, r$. For every $1\leq t\leq r$, we have $f_{t}Qf_{t}=L_{t}$, hence an element of $L_t$ can be lifted to an element of $e_tAe_t$. 
	 
	 
	 If $L_{t}\cong \mathbb{F}$, let $B_t=\mathbb{F}e_{t}\simeq L_t$.	 If $L_{t}\cong M_{2}(\mathbb{F})$, Lemma \ref{lifting} implies that there exist homogeneous matrix units $x^{(t)}_{ij}\in e_tAe_t$, , where $1\leq i,j \leq 2$, that lift the matrix units of $L_t$. Then let $B_{t}:=\sum_{i,j}\mathbb{F}x_{ij}^{t}\simeq L_{t}$, as graded algebras. Now assume that $L_{t}\cong \mathbb{F}[\mathbb{Z}_{2}]$. Let $ u_{t}\in (L_{t})_{1}$ be an element such that $u_{t}^{2}=f_{t}$. Let $U_{t}$ be an element of $e_tAe_t$ such that $\overline{U_t}=u_t$. Note that $e_{t}U_{t}=U_{t}e_{t}=U_{t}$. We claim that $U_{t}^{2}=e_{t}$. Indeed, we have
	 $$\overline{(U_{t}^{2}-e_{t})}=\overline{U_{t}}^{2}-\overline{e_{t}}=u_{t}^{2}-f_{t}=0.$$
	Hence $U_{t}^{2}-e_{t}\in J$. Note that $U_{t}^{2}-e_{t}$ lies in $J_{0}$ and therefore is nilpotent, hence the identity $y^{q}-y=0$ implies that $U_{t}^{2}-e_{t}=0$. Let $B_{t}=\mathbb{F}e_{t}+\mathbb{F}U_{t}$. Note that $B_{t}$ is a subalgebra of $e_tAe_t$ for all $t=1,\dots, r$. Hence $B=B_{1}\oplus\cdots \oplus B_{r}$ is a direct sum, each $B_t$ is an ideal of $B$ and $B$ is a graded subalgebra of $A$.
	 
	From our construction of $B_t$, the restriction of the quotient map $A\rightarrow A/J$ yields an isomorphism from $B_t$ to $L_t$. Hence $x\mapsto x+J$ is an isomorphism from $B$ to $A/J$. As a consequence $B\cap J=0$. Therefore $A=B\dotplus J$.
\end{proof}

We are now able to remove the hypothesis of the existence of a unit in \textbf{Lemma \ref{decomposição COM unidade}}.

\begin{Theorem}\label{teorema do levantamento}
	Let $\mathbb{F}$ be a finite field of characteristic $2$ with $q$ elements, and let $A=A_{0}\oplus A_{1}$ be a finite-dimensional associative $\mathbb{Z}_{2}$-graded algebra satisfying $y^{q}-y=0$ for every $y\in A_{0}$. Then $A=B\dotplus J^{gr}(A)$, where $B=B_{1}\oplus\cdots \oplus B_{r}$ and the $B_{i}$ are graded simple algebras and ideals of $B$. More specifically, $[B_{i}]\in \{[\mathbb{F}],[\mathbb{F}[\mathbb{Z}_{2}]],[M]\}$.
\end{Theorem}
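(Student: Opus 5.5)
The plan is to pass to the unitization and then apply Lemma \ref{decomposição COM unidade}. The snag is that $A^{\sharp}$ need not satisfy $y^{q}-y=0$: an element $\lambda+a$ with $a\in A_0$ satisfies $(\lambda+a)^q=\lambda^q+a^q=\lambda+a$, since $\lambda$ is central and $\lambda^q=\lambda$ in $\mathbb{F}$, and the binomial expansion in characteristic $2$ with $q$ a power of $2$ leaves only the extreme terms. So $A^{\sharp}$ does satisfy the identity, and the obstacle disappears. I will therefore apply Lemma \ref{decomposição COM unidade} to $A^{\sharp}$ with the grading $(A^{\sharp})_0=\mathbb{F}\oplus A_0$ and $(A^{\sharp})_1=A_1$.

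By Lemma \ref{jgrunitization}, $J:=J^{gr}(A)=J^{gr}(A^{\sharp})$. The lemma gives $A^{\sharp}=B'\dotplus J$ with $B'=B'_1\oplus\cdots\oplus B'_s$ and each $[B'_i]\in\{[\mathbb{F}],[\mathbb{F}[\mathbb{Z}_2]],[M]\}$. The task is to cut this down to a complement of $J$ inside $A$.

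Consider $A/J$, which is a graded ideal of $A^{\sharp}/J\cong B'$ of codimension one. Since $J^{gr}(A^{\sharp}/J)=0$, Lemma \ref{idempotent} gives a homogeneous central idempotent $\bar f$ with $A/J=(A^{\sharp}/J)\bar f$. The decomposition of a semisimple graded algebra into graded simple ideals is unique, so $A/J$ is the sum of all but one of the graded simple summands $L_i\cong B'_i$. That omitted summand has dimension one and is therefore of type $\mathbb{F}$.

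Rather than intersect $B'$ with $A$, which may fail to land in $A$, I would redo the proof of Lemma \ref{decomposição COM unidade} directly for $A$ inside $A^{\sharp}$. Write $A/J=L_1\oplus\cdots\oplus L_r$ with unities $f_i$. Lemma \ref{lifting}, applied in the unital algebra $A^{\sharp}$ with the graded-nil ideal $J$, lifts these to orthogonal homogeneous idempotents $e_i$.

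The one point to check is $e_i\in A$. Since $A$ is an ideal of $A^{\sharp}$ and $A^{\sharp}/A\cong\mathbb{F}$, the image of $e_i$ in $\mathbb{F}$ is an idempotent. It equals the image of $f_i$ under $A^{\sharp}/J\to A^{\sharp}/A$, which vanishes because $f_i\in A/J$. Hence $e_i\in A$.

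The matrix units and the elements $U_t$ are then lifted inside $e_tA^{\sharp}e_t\subseteq A$, and the rest of that proof goes through verbatim. This includes the use of $y^q-y$ on $U_t^2-e_t\in J_0$, which is valid in $A$ itself. It yields $B=B_1\oplus\cdots\oplus B_r\subseteq A$ with $B\cong A/J$, so $A=B\dotplus J$.

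The main step to verify carefully is that the liftings of Lemma \ref{lifting} stay inside $A$. For idempotents this is shown above. For matrix units the elements $e_{ii}a e_{jj}$ lie in $A$ because $A$ is an ideal. For the element $(1-b_i)^{-1}a_{1i}e_{ii}$, the factor $a_{1i}e_{ii}$ lies in $A$, so the product lies in $A$ as well.
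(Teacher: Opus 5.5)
Your argument is correct. It shares the paper's opening: you pass to $A^{\sharp}$, check that $(\lambda+a)^q=\lambda+a$ so the unital version of the result applies (a check the paper leaves implicit here), and use Lemma~\ref{jgrunitization} to get $J^{gr}(A^{\sharp})=J=J^{gr}(A)$. After that the two finishes differ.

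The paper uses the unital lemma as a black box. From $A^{\sharp}=B'\dotplus J$ it takes the augmentation $\varphi\colon B'\to\mathbb{F}$, $\lambda 1+a\mapsto\lambda$. This map is onto, since otherwise $B'\subseteq A$ and hence $A^{\sharp}=B'\dotplus J\subseteq A$. So $B'$ is the unitization of $B=B'\cap A=\ker\varphi$. Lemma~\ref{jgrunitization} then gives $J^{gr}(B)=0$, and the structure theorem splits $B$ into graded simple ideals. A dimension count, $\dim B=\dim B'-1=\dim A-\dim J$, yields $A=B\dotplus J$. So your worry that intersecting $B'$ with $A$ might not work is unfounded: $B'\cap A$ is exactly the complement you need.

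You instead re-run the lifting construction for the graded ideal $A/J$ of $A^{\sharp}/J$, applying Lemma~\ref{lifting} in the unital algebra $A^{\sharp}$. This is valid. It delivers the isomorphisms $B_t\cong L_t$ directly from the construction, without a second appeal to Lemma~\ref{jgrunitization} and the structure theorem for $B$. The cost is repeating the proof rather than quoting it. Your initial use of the unital lemma then serves only to identify the graded simple structure of $A^{\sharp}/J$.

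Finally, your membership checks for $e_i$, for the matrix units, and for $(1-b_i)^{-1}a_{1i}e_{ii}$ are unnecessary. Since $J\subseteq A$, any element of $A^{\sharp}$ whose image modulo $J$ lies in $A/J$ already belongs to $A+J=A$.
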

\begin{proof}
	Let $ A^{\sharp}$ be the unitization of $A$. It follows from Lemma \ref{decomposição COM unidade} that $A^{\sharp}=B'\dotplus J^{gr}(A^{\sharp})$. Lemma \ref{jgrunitization} implies that $J^{gr}(A^{\sharp})=J^{gr}(A)$, therefore $A^{\sharp}=B'\dotplus J^{gr}(A)$.
	
	Consider the map $\varphi:B'\mapsto \mathbb{F}$, which for each $x\in B'$, written as $\lambda1+a$, where $a\in A$, is given by $\varphi(x)=\lambda$. Note that $\varphi$ is an epimorphism of graded algebras. Indeed it is a homomorphism, if it is not surjective then it is the zero map. In this case $B'\subseteq A$, hence \[A^{\sharp}=B'\dotplus J^{gr}(A)\subseteq A,\] a contradiction. This implies that $B'$ is the unitization of $B=B'\cap A$. Lemma \ref{jgrunitization} implies that $J^{gr}(B)=J^{gr}(B')=0$. Therefore, Theorem \ref{decomposição estrutural de álgebras semi primas graduadas} implies that there exists a decomposition $B=B_{1}\oplus\cdots \oplus B_{r}$ where the $B_{i}$ are graded simple algebras and ideals of $B$. Note that we have a direct sum $B\dotplus J^{gr}(A)$ and this is a subspace of $A$ of dimension equal to $\mathrm{dim}_{\mathbb{F}}\, A$, hence \[A=B\dotplus J^{gr}(A)=B_{1}\oplus\cdots \oplus B_{r}\dotplus J^{gr}(A).\]
    Lemma \ref{lema da variedade} implies that $[B_{i}]\in \{[\mathbb{F}],[\mathbb{F}[\mathbb{Z}_{2}]],[M]\}$.
\end{proof}

\begin{Theorem}\label{13}
Let $\mathbb{F}$ be a field of characteristic $2$ with $q$ elements. The ideal of graded identities of the algebra $M_2(\mathbb{F})$ with the grading (\ref{El2}) is generated by the polynomials
\begin{equation}\label{yqy}
	 y_{1}^{q}-y_{1}=0.  
\end{equation}
and
\begin{equation}\label{ordm2}
(X_{1}-X_{1}^{q})(X_{2}-X_{2}^{q^{2}})(1-[X_{1},X_{2}]^{q-1})=0,  
\end{equation}
 where $X_{i}=z_{i}+y_{i}$ with $||y_{i}||=0$  and  $||z_{i}||=1.$
\end{Theorem}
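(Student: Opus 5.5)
The plan is to show that the variety $\mathfrak{V}$ defined by (\ref{yqy}) and (\ref{ordm2}) equals $\mathrm{var}(M)$.

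\textbf{Both polynomials are identities of $M$.} Every element of $M_0$ is a diagonal matrix over $\mathbb{F}$, so (\ref{yqy}) holds. Every $X_i=z_i+y_i$ specializes to an arbitrary matrix, and the polynomial in (\ref{ordm2}) is the first Mal'tsev--Kuz'min identity of $M_2(\mathbb{F})$, so (\ref{ordm2}) holds as well. This gives $\mathfrak{V}\supseteq \mathrm{var}(M)$.

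\textbf{Reduction to finite subdirectly irreducible algebras.} For the reverse inclusion I would apply Corollary \ref{varsdirr} with $r=2$. Put $y_i=0$ in (\ref{ordm2}), so that $X_i=z_i$ is arbitrary. Since any element of a graded algebra is a sum of its homogeneous components, every algebra of $\mathfrak{V}$ then satisfies the ordinary identity $(x_1-x_1^q)(x_2-x_2^{q^2})(1-[x_1,x_2]^{q-1})=0$. Expanding it, the only monomial of degree $2$ is $x_1x_2$. Hence it has the form $x_1x_2=f(x_1,x_2)$ with every monomial of $f$ of degree $>2$. By Corollary \ref{varsdirr}, $\mathfrak{V}$ is generated by its finite graded subdirectly irreducible algebras, so it suffices to show that each such $A$ lies in $\mathrm{var}(M)$.

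\textbf{Structure of such an $A$.} By Theorem \ref{teorema do levantamento}, $A=B\dotplus J$ with $J=J^{gr}(A)$ and $B=B_1\oplus\cdots\oplus B_r$, where $[B_i]\in\{[\mathbb{F}],[\mathbb{F}[\mathbb{Z}_2]],[M]\}$. Each of these lies in $\mathrm{var}(M)$:
\begin{itemize}
\item $\mathbb{F}$ is realized by $\mathbb{F}E_{11}\subseteq M_0$;
\item $\mathbb{F}[\mathbb{Z}_2]$ is the graded subalgebra $\mathbb{F}I+\mathbb{F}(E_{12}+E_{21})$ of $M$.
\end{itemize}
Two facts about nilpotent pieces will be used:
\begin{itemize}
\item If $N\in\mathfrak{V}$ is finite and nilpotent, the identity $x_1x_2=f$ gives $N^2\subseteq N^3\subseteq\cdots=0$.
\item By (\ref{yqy}), $N_0=0$, since a nilpotent $y$ with $y^q=y$ must vanish.
\end{itemize}
So the only nilpotent algebras that can occur are odd algebras with zero multiplication. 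Such an algebra is in $\mathrm{var}(M)$ provided $M$ has no graded identity with a nonzero linear part in the odd variables $z$. This holds because the map $z\mapsto E_{12}$ is available and the linear part of any identity can be isolated by scaling $z$ through $\mathbb{F}$. Some care is needed here, since over a finite field scaling only separates homogeneous components modulo $q-1$; this would be handled by comparing with the monomial $z$ evaluated on suitable matrices.

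\textbf{Main step: $J$ is a direct factor.} The crux is to show that $J$ is annihilated by $B$, so that $A\cong B\times J$ with $J$ an odd zero-product algebra. I would work in the unitization, set $e_0=1-\sum_i 1_{B_i}$, and take the Peirce decomposition $J=\oplus_{i,j} e_iJe_j$. Then I would kill each Peirce component by substitutions into (\ref{yqy}) and (\ref{ordm2}):
\begin{itemize}
\item For a homogeneous $j\in e_iJe_j$ with $i\neq j$, the element $e_i+j$ is even or odd. If even, (\ref{yqy}) gives $(e_i+j)^q=e_i+j$, which forces $j=0$.
\item If odd, I would substitute $X_1=e_i+j$ or $X_1=x^{(i)}_{12}+j$ and $X_2$ a suitable unit or matrix unit into (\ref{ordm2}). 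Then $1-[X_1,X_2]^{q-1}$ acts as an idempotent, the first two factors reduce to $\pm j$ times a unit, and this yields $j=0$.
\item Diagonal components $e_iJe_i$ with $i\ge 1$ are treated similarly, using the $\mathbb{F}[\mathbb{Z}_2]$ and $M$ blocks to shift degrees.
\end{itemize}
Once $B$ and $J$ annihilate each other, $A=B\times J$ is a product of members of $\mathrm{var}(M)$, hence lies in $\mathrm{var}(M)$. Subdirect irreducibility even forces $A$ to be one of these factors.

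I expect this Peirce-component computation with (\ref{ordm2}) in characteristic $2$ to be the main obstacle. My first attempt there would mimic Mal'tsev--Kuz'min's argument, choosing substitutions for which $[X_1,X_2]$ is invertible in the relevant corner.
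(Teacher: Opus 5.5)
Your reduction to finite graded subdirectly irreducible algebras and your use of Theorem~\ref{teorema do levantamento} agree with the paper. There is one small slip: to obtain the ordinary identity you should substitute $y_i\mapsto a_0$, $z_i\mapsto a_1$ for an arbitrary $a=a_0+a_1$, not set $y_i=0$.

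The genuine gap is your main step. The claim that $B$ annihilates $J$, so that $A\cong B\times J$, is false, so the Peirce computation you flagged is not just hard but impossible. Consider the graded subalgebras $\mathbb{F}E_{11}+\mathbb{F}E_{12}$, $\mathbb{F}I+\mathbb{F}E_{12}$ and $\mathbb{F}E_{11}+\mathbb{F}E_{22}+\mathbb{F}E_{12}$ of $M$. Each is finite and lies in $\mathrm{var}(M)\subseteq\mathfrak{V}$. Each is graded subdirectly irreducible, since every nonzero graded ideal contains $E_{12}$. In each one $J^{gr}=\mathbb{F}E_{12}$, and $E_{12}$ lies in the Peirce component $e_1Je_0$, $e_1Je_1$, $e_1Je_2$, respectively. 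These algebras satisfy every graded identity of $M$, so no substitution into (\ref{yqy}) and (\ref{ordm2}) can kill those components. More generally, if $A$ is unital and $J\neq0$, then $1_A\in B$ for every complement $B$, so $BJ=J\neq 0$. For the same reason, your remark that subdirect irreducibility forces $A$ to be one of the factors is false.

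Part of your idea does survive, for the blocks $M$ and $\mathbb{F}[\mathbb{Z}_2]$. Since $J=J_1$ by (\ref{yqy}), we have $A_1J=JA_1\subseteq J_0=0$. The unit of such a block is a product of two odd elements, so these blocks do annihilate $J$. The blocks $B_i\cong\mathbb{F}$ need not.

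The missing idea is to aim for a graded embedding $A\hookrightarrow M$ rather than a splitting. The paper argues as follows when $0\neq N=J^{gr}(A)\neq A$.
\begin{enumerate}
\item From $A_1N=NA_1=0$, the graded ideal generated by an odd $x\in B$ lies in $B$. Hence it meets $N$ trivially, so $x=0$. Thus $A_0=B$, $A_1=N$, and every block is $B_i=\mathbb{F}e_i$.
\item If $AN=NA=0$, then $B$ is a nonzero graded ideal meeting $N$ trivially, which is impossible. By symmetry you may assume $AN\neq0$.
\item The $e_iN$ are graded ideals with pairwise zero intersections. So exactly one of them, say $e_1N$, is nonzero, and $N=e_1N$. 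Likewise at most one $Ne_j$ is nonzero.
\item Now every line $\mathbb{F}n\subseteq N$ is a graded ideal, so $\dim N=1$. Any block acting trivially on $N$ would be a graded ideal meeting $N$ trivially, so no such block occurs.
\end{enumerate}
This leaves, up to isomorphism, exactly the three algebras above together with the mirror image $\mathbb{F}E_{22}+\mathbb{F}E_{12}$. All of these are graded subalgebras of $M$ by construction.

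Your nilpotent case can also be simplified. An odd algebra with zero multiplication is a direct sum of copies of $\mathbb{F}E_{12}\subseteq M$, so no argument about linear parts of identities is needed.
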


 \begin{proof}
Let $\mathfrak{V}$ be the variety determined by the identities (\ref{yqy}) and (\ref{ordm2}), note that $\mathrm{Var}(M)\subseteq \mathfrak{V}$. The result follows if we prove the reverse inclusion. Identity (\ref{ordm2}) and Corollary \ref{varsdirr} implies that to prove the desired inclusion it is sufficient to prove that every finite graded subdirectly irreducible algebra in $\mathfrak{V}$ lies in $\mathrm{Var}(M)$. We prove, more generally, that each such algebra is isomorphic to a subalgebra of $M$.

Let $A=B_{1}\oplus\cdots\oplus B_{s}\dotplus J^{gr}(A)$ be a finite graded subdirectly irreducible algebra in $ \mathfrak{V}$, where $J^{gr}(A)\neq 0$ and each $B_{i}$ is graded simple, moreover we assume that $s>0$. 
 Henceforth denote $N=J^{gr}(A)$. Note that $N$ is a nilpotent algebra in $\mathfrak{V}$. Then $N_0$ is nilpotent and satisfies (\ref{yqy}), hence $N_0=0$. As a consequence $N=N_1$ and $N^2=0$. Hence $A_{1}=(A_{1}\cap B)\dotplus N$, where $B=B_{1}\oplus\cdots\oplus B_{s}$. We have $A_1N=NA_1=0$, therefore if $x\in A_1\cap B$ then $\langle x \rangle\subseteq B$, where $\langle x \rangle$ is the ideal generated by $x$. As a consequence $\langle x\rangle \cap N=0 $. Since $A$ is graded subdirectly irreducible and $N\neq 0$, we conclude that $x=0$. Therefore $A_{1}=N$ and $A_{0}=B$. As a consequence $B_1=0$ and Theorem \ref{teorema do levantamento} implies that $B_i\cong \mathbb{F}$ for all $i=1,\dots, r$.
  
  Note that if $AN=NA=0$ then $B$ is a non-zero graded ideal such that $B\cap N=0$, this is a contradiction since $A$ is subdirectly irreducible. Therefore $AN\neq 0$ or $NA\neq 0$. Assume, without loss of generality, that $AN\neq0$. Let $e_{i}$ be the unity of $B_{i}$ for $i=1,\dots, r$. Note that $e_{1}N,\ldots,e_{s}N$ are graded ideals, moreover $e_iN\cap e_jN=0$ whenever $i\neq j$. Therefore only one of these ideals, say $e_{1}N$, is different from zero. Hence $B_{i}N=0$ for all $i=2, \dots, r$.

  We claim that $N=e_1N$. Indeed note that $I=\{n-e_{1}n\mid n\in N\}$ is a graded ideal of $A$ such that $I\cap e_1N=0$. Since $A$ is subdirectly irreducible this implies that $I=0$. We will prove that $A$ is isomorphic to a subalgebra of $M$. We remark that if $NA\neq 0$ then $Ne_j=N$ for some $j$, we may assume in this case that $j\leq 2$. There are three possible cases.

  \vspace{0,3cm}
    
    \textbf{Case 1:} $NA=0$. In this case $s=1$. Indeed, assume that $s\geq 2$. Note that $B_2N=B_2e_1N=0=NB_2$ and $B_iB_2=0=B_iB_2$ for all $i\neq 2$. Hence $B_{2}$ is a graded ideal such that $B_{2}\cap N=0$, which is a contradiction. 
   Hence we conclude that $s=1$. We claim that $N$ has dimension $1$ over  $\mathbb{F}$. Indeed since $B_{i}=\mathbb{F}e_i$ if $x,y\in N$, then $I=span_{\mathbb{F}}\{x\}$ e $I'=span_{\mathbb{F}}\{y\}$ are graded ideals of $A$. If $x$ and $y$ are linearly independent we have $I\cap I'=0$, a contradiction. 
    
    Let $n\in N\setminus \{0\}$, then $\beta=\{e_{1},n\}$ is a basis of $A$ such that $||e_{1}||=0$, $||n||=1$ and
    $$e_{1}n=n,\mbox{ }ne_{1}=0,\mbox{ },e_{1}^{2}=e_{1}\mbox{ e }n^{2}=0.$$
    The map     
    \begin{align*}
    	\varphi: A&\longrightarrow M\\
    	\lambda e_{1}+\mu n&\mapsto\varphi(\lambda e_{1}+\mu n)=\begin{pmatrix}
    		\lambda & \mu\\
    		0 & 0
    	\end{pmatrix}
    \end{align*}
 is a monomorphism of graded algebras.
 
  
 \textbf{Case 2:} $NA\neq 0$ and $Ne_{1}=N$. Since $N=e_{1}N$ we conclude that $N=e_{1}Ne_{1}$. Hence $B_{i}N=NB_{i}=0$ for all $i=2,\dots, r$. Thus if $s\geq 2$, then $B_{2}$ is a graded ideal of $A$ and $N\cap B_2=0$. This is a  contradiction, since $A$ is graded subdirectly irreducible, therefore $s=1$ and $B_1=\mathbb{F} e_1$. We claim that $N$ has dimension $1$ over $\mathbb{F}$. Indeed, assume that $x, y\in N$ then $I=\mathbb{F}x$ and $I'=\mathbb{F}y$ are graded ideals of $A$. Since $I\cap I'=0$ if $x$ and $y$ are linearly independent we conclude that $N$ has dimension $1$ over $\mathbb{F}$.
 
 Let $n\in N\setminus\{0\}$. Then $A$ has a basis $\beta=\{e_{1},n\}$ such that  $||e_{1}||=\overline{0}$, $||n||=\overline{1}$ 
$$e_{1}n=n,\mbox{ }ne_{1}=n,\mbox{ },e_{1}^{2}=e_{1}\mbox{ e }n^{2}=0$$

Therefore the map
 \begin{align*}
	\varphi: A&\longrightarrow M\\
	\lambda e_{1}+\mu n&\mapsto\varphi(\lambda e_{1}+\mu n)=\begin{pmatrix}
		\lambda & \mu\\
		0 & \lambda
	\end{pmatrix}
\end{align*}
is a monomorphism of graded algebras.

\textbf{Case 3} $NA\neq 0$ and $Ne_{2}=N$. Since $N=e_{1}N$, we conclude that $N=e_{1}Ne_{2}$. We have $B_{i}N=NB_{i}=0$ for $i\geq 3$. If $s\geq 3$ then $B_3$ is a graded ideal of $A$ such that $N\cap B_3=0$, a contradiction. Hence $s=2$.  Note that $NB_{1}=B_{2}N=0$. We claim that $N$ has dimension $1$ over $\mathbb{F}$. The argument is analogous to the previous cases as the subspace generated by an element of $N$ is a graded ideal of $A$. Let $n$ be a generator of $N$, then $A$ has a basis $\beta=\{e_{1},e_{2},n\}$ such that $||e_{1}||=||e_{2}||=0$, $||n||=1$ and moreover
$$e_{1}e_{2}=e_{2}e_{1}=ne_{1}=e_{2}n=n^{2}=0, \mbox{ }e_{1}^{2}=e_{1},\mbox{ }e_{2}^{2}=e_{2}\mbox{ and  }e_{1}n=ne_{2}=n.$$

Therefore the map
\begin{align*}
	\varphi: A&\longrightarrow M\\
	\lambda e_{1}+\alpha e_{2}+\mu n&\mapsto\varphi(\lambda e_{1}+\alpha e_{2}+\mu n)=\begin{pmatrix}
		\lambda & \mu\\
		0 & \alpha
	\end{pmatrix}.
\end{align*}
is an isomorphism of graded algebras.

Hence we conclude that a subdirectly irreducible algebra $A$ in the variety $\mathfrak{V}$ is isomorphic to a subalgebra of $M$ if $N\neq 0$ and $s\geq 1$. It remains to consider the cases $N=A$ and $N=0$.

Assume first that $A=N$. Then $A=N=N_1$ and $A^{2}=0$. In this case $A$ has dimension $1$ over $\mathbb{F}$ because it is graded subdirectly irreducible. Then $A\cong\mathbb{F}E_{12}$ is isomorphic to a subalgebra of $M$.

Now assume that $N=0$, then $A=B=B_{1}\oplus\cdots\oplus B_{s}$. Since $A$ is graded subdirectly irreducible we have $s=1$. Moreover $A$ is isomorphic as a graded algebra to one of the algebras $\mathbb{F}$, $M$ or $\mathbb{F}[\mathbb{Z}_{2}]$. The first two algebras are isomorphic to subalgebras of $M$. Let $\{1,u\}$ be the canonical basis for $\mathbb{F}[\mathbb{Z}_{2}]$, i.\,e., $1$ is the unity and $u^2=1$. Then the map
\begin{align*}
	\varphi: \mathbb{F}[\mathbb{Z}_{2}]&\longrightarrow M\\
	\lambda\cdot 1+\alpha\cdot u&\mapsto\varphi(\lambda\cdot 0+\alpha\cdot u)=\begin{pmatrix}
		\lambda & \alpha\\
		\alpha & \lambda
	\end{pmatrix}.
\end{align*}
is a monomorphism of graded algebras. 
\end{proof}

\subsection{Elementary Fine Grading}

Let $G=\langle g \rangle$ be a cyclic group with $|G|>2$, we do not assume that $G$ has finite order and use the multiplicative notation. In this section $\widetilde{M}$ denotes the algebra $M_2(\mathbb{F})$ with the grading with support $\{1,g, g^{-1}\}$ and

\begin{equation}\label{Elfin}
    \begin{split}
			&\widetilde{M}_e =
			\begin{pmatrix}
				\mathbb{F} & 0 \\
				0 & \mathbb{F}
			\end{pmatrix},
			\quad
			\widetilde{M}_g =
			\begin{pmatrix}
				0 & \mathbb{F} \\
				0 & 0
			\end{pmatrix},
			\quad
			\widetilde{M}_{g^{-1}} =
			\begin{pmatrix}
				0 & 0 \\
				\mathbb{F} & 0
			\end{pmatrix}.			
    \end{split}
\end{equation}



In order to describe the basis for the graded identities of $\tilde{M}$ we establish the following notation:

\begin{notation}
Until the end of this subsection we denote by $y_i$, $z_i$ and $w_i$ the indeterminates $x_{i,e}$, $x_{i,g}$ and $x_{i,g^{-1}}$ of $X_{G}$, respectively.
\end{notation}

We define the following polynomials:
\begin{equation}\label{idelfin}
\begin{split}
f_{1}&=y_{1}^{q}-y_{1},\\
f_{2}&=(X_{1}-X_{1}^{q})(X_{2}-X_{2}^{q^{2}})(1-[X_{1},X_{2}]^{q-1}), \mbox{ where }X_{i}=y_{i}+z_{i}+w_{i},\\
f_{3}&=z_{1}z_{2}\\
f_{4}&=w_{1}w_{2}.
\end{split}
\end{equation}

\begin{lemma}\label{lema de como é o Mn(D) pra levantar e,g,g^(-1)}
	Let $\mathbb{F}$ be a finite field with $q$ elements, $G=\langle g \rangle$ a cyclic group of order greater than $2$. Let $C=M_{n}(D)$ be a graded simple algebra in the variety determined by the polynomials $f_{1},f_{2},f_{3},f_{4}$ in (\ref{idelfin}) and $x_{1,h}$, for all $h\notin \{e,g,g^{-1}\}$. Then $C$ is isomorphic to $\mathbb{F}$ or to $M_{2}(\mathbb{F})$ with the grading $\widetilde{M}$ in (\ref{Elfin}).
\end{lemma}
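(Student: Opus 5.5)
We follow the pattern of the proof of Lemma \ref{lema da variedade}. Write $C=M_n(D)(\gamma)$ with $\gamma=(g_1,\dots,g_n)$, so that $||dE_{ij}||=g_i h g_j^{-1}$ for $d\in D_h$. The plan has three steps. First we pin down $D$, then we bound $n$ and identify the possible degree patterns, and finally we match them to (\ref{Elfin}).

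\emph{Step 1: $D=\mathbb{F}$.} Every homogeneous element of $D$ appears in $C$ as $dE_{11}$, of degree $g_1hg_1^{-1}$. If $h\notin\{e,g,g^{-1}\}$, then $g_1hg_1^{-1}$ is also outside this set, since $G$ is abelian. The identity $x_{1,h}$ then forces $D_h=0$. If $h\in\{g,g^{-1}\}$ and $d\in D_h$ is nonzero, then $d$ is invertible, and $f_3$ (resp.\ $f_4$) gives $d^2E_{11}=(dE_{11})(dE_{11})=0$, which is impossible. Hence $D=D_e$. Now $f_1$ holds on $D_e E_{11}\cong D_e$, so every element of the division algebra $D_e$ satisfies $y^q=y$. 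Since $D_e$ is finite-dimensional over the finite field $\mathbb{F}$, it is finite, and Wedderburn's theorem makes it a field. Every element of that field is a root of $y^q-y$, so it has at most $q$ elements, and therefore $D=D_e=\mathbb{F}$.

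\emph{Step 2: the degrees of the matrix units.} Each $E_{ij}$ is homogeneous of degree $g_ig_j^{-1}$, and this degree must lie in $\{e,g,g^{-1}\}$. For $i\neq j$ the degree cannot be $e$. Otherwise $E_{ij}\in C_e$ would satisfy $E_{ij}^q=0$ while $f_1$ gives $E_{ij}^q=E_{ij}$, a contradiction. So for $i\neq j$ each $E_{ij}$ has degree $g$ or $g^{-1}$.

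\emph{Step 3: $n\le 2$, which I expect to be the main obstacle.} Suppose $n\geq 3$ and consider $E_{12},E_{23}$. If both have degree $g$, then $f_3$ gives $E_{12}E_{23}=E_{13}=0$, which is absurd; the same argument with $f_4$ excludes both having degree $g^{-1}$. So one of them has degree $g$ and the other $g^{-1}$. Then $E_{13}=E_{12}E_{23}$ has degree $e$, which contradicts Step 2. Therefore $n\le 2$. This is the step that needs care: one has to see that every off-diagonal position is forced into $\{g,g^{-1}\}$, and then use the identities $f_3,f_4$ together with the fact that $\mathbb{F}\supseteq$ the diagonal idempotent structure. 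Note that $f_2$ is not needed here.

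\emph{Conclusion.} If $n=1$, then $C\cong\mathbb{F}$ with the trivial grading. If $n=2$, then $E_{12}$ has degree $g$ or $g^{-1}$, and $E_{21}$ has the inverse degree. If $||E_{12}||=g^{-1}$, conjugating by the permutation matrix that swaps the two indices (equivalently, reversing $\gamma$) gives a graded isomorphism to the case $||E_{12}||=g$. In that case $C_e$ is the diagonal, $C_g=\mathbb{F}E_{12}$ and $C_{g^{-1}}=\mathbb{F}E_{21}$, which is exactly $\widetilde{M}$ in (\ref{Elfin}).
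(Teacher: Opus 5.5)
Your proof is correct, but it departs from the paper's at two points.

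\textbf{The graded division algebra $D$.} The paper notes that $\mathrm{supp}\,D$ is a subgroup of $G$ contained in $\{e,g,g^{-1}\}$. Hence it equals $\{e\}$ or $\{e,g,g^{-1}\}$, the latter only when $|G|=3$, and the paper excludes that option with $f_3,f_4$. You instead kill $D_g$ and $D_{g^{-1}}$ directly by evaluating $f_3,f_4$ on $dE_{11}$, which skips the subgroup analysis.

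\textbf{The bound $n\le 2$.} Here the difference is more substantial. The paper gets the bound in one line from $f_2$, the Mal'tsev--Kuz'min identity of $M_2(\mathbb{F})$. This identity fails in $M_n(\mathbb{F})$ for $n\ge 3$: at $X_1=E_{12}$, $X_2=E_{23}$ it evaluates to $E_{13}$. You argue through the grading instead, as the paper itself does in Lemma \ref{lema da variedade}. First, $f_1$ keeps off-diagonal units out of $C_e$. Then $E_{12},E_{23}$ of equal degree violate $f_3$ or $f_4$, while unequal degrees put $E_{13}$ in $C_e$. This correctly handles $|G|=3$, where $g^2=g^{-1}$ and the $\mathbb{Z}_3$-graded $M_3(\mathbb{F})$ is not ruled out by degrees alone.

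\textbf{What each route buys.} The paper's route is shorter, given the ordinary theory. Yours is self-contained. It also shows that $f_2$ is not needed to classify the graded simple algebras of the variety; in the main theorem, $f_2$ is used only to apply Corollary \ref{varsdirr}.

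\textbf{Two small remarks.}
\begin{enumerate}
\item The sentence in Step 3 about ``$\mathbb{F}\supseteq$ the diagonal idempotent structure'' is meaningless and should be removed; nothing in the argument uses it.
\item Finite-dimensionality of $D_e$ is not in the statement; it is only implicit in how the lemma is applied. You can dispense with it. For each $d\in D_e$, the subalgebra $\mathbb{F}[d]$ is a finite field, and all its elements satisfy $y^q=y$. Hence $\mathbb{F}[d]=\mathbb{F}$.
\end{enumerate}
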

\begin{proof}
	Note that $D_e$ satisfies the identity $f_{1}$, therefore $D_{e}=\mathbb{F}$. Moreover $x_{1,h}$, for all $h\notin \{e,g,g^{-1}\}$ are identities for $D$, hence $T:=\mathrm{supp}\,D$ is a subgroup of $G$ contained in $\{e,g,g^{-1}\}$. Hence $T=\{e\}$ or $T=\{e,g,g^{-1}\}$. The latter cannot occur because $D$ satisfies the identities $f_{3}$ and $f_{4}$. Therefore $T=\{e\}$ and $D=\mathbb{F}$. Hence $C=M_n(\mathbb{F})$ with an elementary grading. Identity $f_{2}$ implies that $n\leq 2$. If $n=1$ then $C\cong \mathbb{F}$. Now assume that $n=2$. The identities $f_1$ and $x_{1,h}$, $h\in G\setminus \{1,g,g^{-1}\}$ imply that the support of the grading on $C$ is $\{1, g, g^{-1}\}$. Moreover $f_1$ implies that the grading on $C$ is not the trivial grading. Note that $||E_{12}||=||E_{21}||^{-1}$. Hence if $||E_{12}||=g$ then $C=\widetilde{M}$. If $||E_{12}||=g^{-1}$ then the map $E_{ij}\mapsto E_{\sigma(i)\sigma(j)}$, where $\sigma$ is the transposition $(12)$ of the permutation group $S_2$, is an isomorphism from $C$ to $\widetilde{M}$.
\end{proof}

\begin{lemma}\label{nome aleata}
	Let $\mathbb{F}$ be a finite field with $q$ elements, $G=\langle g \rangle$ a cyclic group of order greater than $2$. Let $A$ be a unital $G$-graded algebra that satisfies the identities $f_{1},f_{2},f_{3},f_{4}$ in (\ref{idelfin}) and $x_{1,h}$ for $h\in G \setminus\{1,g, g^{-1}\}$. Then there exists a graded subalgebra $B$ of $A$, such that $B=B_{1}\oplus\cdots\oplus B_{s}$, $s\geq 0$, each $B_{i}$ is isomorphic as a graded algebra to $\mathbb{F}$ or to $\widetilde{M}$, and $A=B\dotplus J^{gr}(A)$.
\end{lemma}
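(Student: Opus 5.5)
The plan is to follow the proof of Lemma \ref{decomposição COM unidade} closely. I note that $A$ is implicitly finite dimensional, as in the earlier lemmas; this is needed to invoke the structure theory. Write $J=J^{gr}(A)$.

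The first step is to describe $A/J$. Since $J^{gr}(A/J)=0$, Theorem \ref{decomposição estrutural de álgebras semi primas graduadas} gives
\[
A/J=L_1\oplus\cdots\oplus L_s
\]
with each $L_t$ graded simple. Each $L_t$ lies in the variety determined by $f_1,f_2,f_3,f_4$ and $x_{1,h}$ for $h\notin\{e,g,g^{-1}\}$, because it is a homomorphic image of $A$. So Lemma \ref{lema de como é o Mn(D) pra levantar e,g,g^(-1)} gives $L_t\cong\mathbb{F}$ or $L_t\cong\widetilde{M}$.

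The second step lifts the unities of the $L_t$. Let $f_t$ be the unity of $L_t$. Since $J$ is nilpotent, it is graded-nil, and $A$ is unital. Lemma \ref{lifting}(2) therefore yields orthogonal homogeneous idempotents $e_1,\dots,e_s\in A_e$ with $\overline{e_t}=f_t$, and elements of $L_t=f_t(A/J)f_t$ lift to $e_tAe_t$.

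The third step builds the summands $B_t$. If $L_t\cong\mathbb{F}$, set $B_t=\mathbb{F}e_t$. If $L_t\cong\widetilde{M}$, the matrix units $E_{11},E_{22}\in (L_t)_e$, $E_{12}\in (L_t)_g$, $E_{21}\in(L_t)_{g^{-1}}$ form a set of homogeneous $2\times 2$ matrix units in $f_t(A/J)f_t$. I apply Lemma \ref{lifting}(3) inside the unital graded algebra $e_tAe_t$, whose ideal $e_tJe_t$ is graded-nil. This produces homogeneous matrix units $x^{(t)}_{ij}$ of the same degrees, and I set $B_t=\sum_{i,j}\mathbb{F}x^{(t)}_{ij}$. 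Since the degrees match and matrix units span a copy of $M_2(\mathbb{F})$, we get $B_t\cong\widetilde{M}$ as graded algebras. Because $B_t\subseteq e_tAe_t$ and the $e_t$ are orthogonal, $B_tB_u=0$ for $t\neq u$. Hence $B=B_1\oplus\cdots\oplus B_s$ is a graded subalgebra of $A$.

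The final step shows the sum with $J$ is direct and exhausts $A$. By construction, the quotient map restricted to $B_t$ sends a basis of $B_t$ bijectively onto a basis of $L_t$. So $B\to A/J$ is an isomorphism, which gives $B\cap J=0$ and $A=B\dotplus J$.

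I expect the main obstacle to be the third step. One must check that Lemma \ref{lifting}(3) can be applied in $e_tAe_t$ with unity $e_t$ rather than $1$, and that the lifted units keep the grading degrees $e,g,g^{-1}$. In Lemma \ref{lifting}(3) the lifts are constructed in the prescribed homogeneous components, so this should work. The identities $f_1,\dots,f_4$ are only used through Lemma \ref{lema de como é o Mn(D) pra levantar e,g,g^(-1)}. Unlike the $\mathbb{F}[\mathbb{Z}_2]$ case of Lemma \ref{decomposição COM unidade}, no separate use of $f_1$ is needed to kill a nilpotent correction.
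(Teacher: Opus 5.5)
Your proposal is correct and follows the same route as the paper. The paper decomposes $A/J^{gr}(A)$ by the structure theorem, identifies each summand as $\mathbb{F}$ or $\widetilde{M}$ via the preceding lemma, and then says the rest is analogous to the unital $\mathbb{Z}_2$ lifting argument; you simply write that argument out, and your remarks that finite dimensionality is implicitly required and that no separate use of $f_1$ is needed here (unlike the $\mathbb{F}[\mathbb{Z}_2]$ summands) are both accurate.
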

\begin{proof}
	Theorem \ref{decomposição estrutural de álgebras semi primas graduadas} implies that $A/J^{gr}(A)\cong M_{n_{1}}(D_{1})\oplus\cdots\oplus M_{n_{s}}(D_{s})$. Note that each $M_{n_{i}}(D_{i})$ satisfies the hypothesis of Lemma \ref{lema de como é o Mn(D) pra levantar e,g,g^(-1)}, therefore each $B_i$ is isomorphic to $\mathbb{F}$ or to $\widetilde{M}$. The result now follows by an argument analogous to the one in the proof of Lemma \ref{decomposição COM unidade}.
\end{proof}

\begin{Theorem}\label{decompelfin}
		Let $\mathbb{F}$ be a finite field with $q$ elements, $G=\langle g \rangle$ a cyclic group of order greater than $2$. Let $A$ be a $G$-graded algebra that satisfies the identities $f_{1},f_{2},f_{3},f_{4}$ in (\ref{idelfin}) and $x_{1,h}$ for $h\in G \setminus\{1,g, g^{-1}\}$. Then there exists a graded subalgebra $B$ of $A$, such that $B=B_{1}\oplus\cdots\oplus B_{s}$, $s\geq 0$, each $B_{i}$ is isomorphic as a graded algebra to $\mathbb{F}$ or to $\widetilde{M}$, and $A=B\dotplus J^{gr}(A)$.
\end{Theorem}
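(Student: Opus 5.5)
The plan is to copy the proof of Theorem \ref{teorema do levantamento}, using Lemma \ref{nome aleata} in place of Lemma \ref{decomposição COM unidade}. The statement tacitly assumes $A$ is finite dimensional, as in Theorem \ref{teorema do levantamento}, since this is needed for Theorem \ref{decomposição estrutural de álgebras semi primas graduadas} and Lemma \ref{jgrunitization}.

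First, pass to the unitization $A^{\sharp}=\mathbb{F}\oplus A$ with $(A^\sharp)_e=\mathbb{F}\oplus A_e$, and check that $A^{\sharp}$ still satisfies the hypotheses of Lemma \ref{nome aleata}.
\begin{itemize}
\item The identities $x_{1,h}$ for $h\notin\{e,g,g^{-1}\}$, and $f_3,f_4$, survive, because the components of degrees $g^{\pm1}$ and $h$ are unchanged.
\item For $f_1$, note that $(\lambda+a)^q=\lambda^q+a^q=\lambda+a$ for $a\in A_e$. Indeed, $\lambda$ is central, the binomial coefficients $\binom{q}{i}$ with $0<i<q$ vanish in characteristic $p$, and $\lambda^q=\lambda$ in $\mathbb{F}$.
\item For $f_2$ I will not verify it on $A^\sharp$ directly; this is the main obstacle. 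My first attempt is to observe that the proof of Lemma \ref{nome aleata} uses $f_2$ only through the graded simple components of $A^{\sharp}/J^{gr}(A^{\sharp})$. By Lemma \ref{jgrunitization} this quotient is $A^\sharp/J^{gr}(A)$, which is the unitization of $A/J^{gr}(A)$. In turn $A/J^{gr}(A)$ is unital by Corollary \ref{jgrunital}, so the quotient is isomorphic to $A/J^{gr}(A)\oplus\mathbb{F}$. Its simple components are those of $A/J^{gr}(A)$, which satisfy $f_1,\dots,f_4$, together with one extra copy of $\mathbb{F}$, which satisfies everything. So Lemma \ref{lema de como é o Mn(D) pra levantar e,g,g^(-1)} still applies to every component, and the lifting argument of Lemma \ref{nome aleata} (that is, of Lemma \ref{decomposição COM unidade}, via Lemma \ref{lifting}) goes through for $A^\sharp$.
\end{itemize}
In this way we obtain $A^{\sharp}=B'\dotplus J^{gr}(A^\sharp)=B'\dotplus J^{gr}(A)$, with $B'$ a direct sum of copies of $\mathbb{F}$ and $\widetilde{M}$.

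Second, define $\varphi:B'\to\mathbb{F}$ by $\lambda 1+a\mapsto\lambda$. This is a graded homomorphism. It is nonzero, since otherwise $A^\sharp\subseteq A$. Hence $B'$ is the unitization of $B=B'\cap A$. By Lemma \ref{jgrunitization}, $J^{gr}(B)=J^{gr}(B')=0$. Theorem \ref{decomposição estrutural de álgebras semi primas graduadas} then gives $B=B_1\oplus\cdots\oplus B_s$ with each $B_i$ graded simple and an ideal of $B$. Each $B_i$ is a graded subalgebra of $A$, so it satisfies the identities, and Lemma \ref{lema de como é o Mn(D) pra levantar e,g,g^(-1)} shows $B_i\cong\mathbb{F}$ or $B_i\cong\widetilde{M}$.

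Finally, compare dimensions. The sum $B\dotplus J^{gr}(A)$ lies in $A$, is direct because $B\cap J^{gr}(A)\subseteq B'\cap J^{gr}(A)=0$, and has dimension $\dim B'-1+\dim J^{gr}(A)=\dim A^\sharp-1=\dim A$. Therefore $A=B\dotplus J^{gr}(A)$.
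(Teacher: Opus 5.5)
Your proof is correct. Apart from one step, it follows the paper's proof: pass to $A^{\sharp}$, decompose it as $B'\dotplus J^{gr}(A)$, restrict $\lambda 1+a\mapsto\lambda$ to $B'$, set $B=B'\cap A$, and count dimensions.

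The difference is in how you obtain the decomposition of $A^{\sharp}$. The paper checks that $A^{\sharp}$ satisfies every hypothesis of Lemma \ref{nome aleata}, $f_2$ included, and then cites that lemma as stated. The check for $f_2$ is not really an obstacle. Once the $1$ in $f_2$ is multiplied out, $f_2$ has no constant term. It then suffices to note two facts: $(\lambda+a)-(\lambda+a)^{q^n}=a-a^{q^n}$, which is the same Frobenius computation you did for $f_1$, and $[\lambda+a,\mu+b]=[a,b]$. So every evaluation of $f_2$ on $A^{\sharp}$ equals an evaluation on $A$.

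You instead identify $A^{\sharp}/J^{gr}(A^{\sharp})\cong A/J^{gr}(A)\times\mathbb{F}$. This uses Lemma \ref{jgrunitization}, Corollary \ref{jgrunital}, and the central idempotent $1-1_{A/J^{gr}(A)}$. Applying Corollary \ref{jgrunital} needs $J^{gr}(A/J^{gr}(A))=0$, a standard fact the paper also uses tacitly. You then rerun the proof of Lemma \ref{nome aleata}. That proof does use the identities only to classify the graded simple components of the quotient; the lifting via Lemma \ref{lifting} needs only unitality and a graded-nil radical. So your argument is valid.

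What your route buys is independence from the specific shape of $f_2$: you do not even need your check of $f_1$, $f_3$, $f_4$ on $A^{\sharp}$. The cost is that you must reopen the lemma's proof rather than cite it, for a difficulty the paper settles in one line.
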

\begin{proof}
	We claim that the unitization $A^{\sharp}$ satisfies the identities $f_{1},f_{2},f_{3},f_{4}$ and $x_{1,h}$ for $h\in G \setminus\{1,g, g^{-1}\}$. The proof that $A^{\sharp}$ satisfies $f_1$ is analogous to the one in the proof of Theorem \ref{teorema do levantamento}. The result is clear for $f_{3},f_{4}$ and $x_{1,h}$. To verify the result for $f_{2}$ it is sufficient to note that for every $a, b\in A^{\sharp}$ and every $\lambda,\mu \in \mathbb{F}$ we have \[(\lambda+a)-(\lambda+a)^{q^n}=a-a^{q^n}\] and \[[\lambda+a, \mu +b]=[a,b].\]
    The rest of the proof is analogous to that of Theorem \ref{teorema do levantamento}.
\end{proof}

\begin{Theorem}
Let $\mathbb{F}$ be a field with $q$ elements. The ideal of graded identities of the algebra $M_2(\mathbb{F})$ with the grading (\ref{Elfin}) is generated by the polynomials:
\begin{equation}\label{varefin}
\begin{split}
&f_{1}=y_{1}^{q}-y_{1}\\
&f_{2}=(X_{1}-X_{1}^{q})(X_{2}-X_{2}^{q^{2}})(1-[X_{1},X_{2}]^{q-1}) \mbox{ where } X_{i}=y_{i}+z_{i}+w_{i}\\
&f_{3}=z_{1}z_{2}\\
&f_{4}=w_{1}w_{2}\\
&x_{1,h},\,\mbox{ where } h\in G\setminus \{1,g,g^{-1}\}.
\end{split}
\end{equation}
\end{Theorem}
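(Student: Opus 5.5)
The plan follows the proof of Theorem \ref{13}. Let $\mathfrak{V}$ be the variety determined by the polynomials in (\ref{varefin}). First check that $\widetilde{M}$ satisfies them.
\begin{itemize}
\item $f_1$ holds because $\widetilde{M}_e$ is diagonal over $\mathbb{F}$.
\item $f_3$ and $f_4$ hold because $E_{12}^2=E_{21}^2=0$.
\item $x_{1,h}$ holds for $h\notin\{1,g,g^{-1}\}$ by the definition of the support.
\item $f_2$ holds because $X_1,X_2$ range over $M_2(\mathbb{F})$, and $f_2$ is the identity of Mal'tsev--Kuz'min.
\end{itemize}
Hence $\mathrm{var}(\widetilde{M})\subseteq\mathfrak{V}$. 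For the reverse inclusion, substitute $y_i=0$ and $w_i=0$ into $f_2$: this shows that $\mathfrak{V}$ satisfies an ordinary identity of the form $x_1\cdots x_r=f$ required by Corollary \ref{varsdirr}, as in the proof of Theorem \ref{13} (indeed $f_2$ itself, viewed as an ordinary identity, has this shape after expanding $(x_1-x_1^q)(x_2-x_2^{q^2})(1-[x_1,x_2]^{q-1})$). So it suffices to show that every finite graded subdirectly irreducible $A\in\mathfrak{V}$ embeds into $\widetilde{M}$ as a graded algebra.

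By Theorem \ref{decompelfin}, $A=B_1\oplus\cdots\oplus B_s\dotplus N$ with $N=J^{gr}(A)$ and each $B_i\cong\mathbb{F}$ or $\widetilde{M}$. Since $N_e$ is nilpotent and satisfies $f_1$, we get $N_e=0$. So $N=N_g\oplus N_{g^{-1}}$.

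Now use $f_3$, $f_4$ and the support condition. Products $N_gN_g$ and $N_{g^{-1}}N_{g^{-1}}$ vanish. A product $N_gN_{g^{-1}}$ lies in $N_e=0$. Hence $N^2=0$.

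\emph{Case $N=0$.} Subdirect irreducibility gives $s=1$, so $A\cong\mathbb{F}$ or $A\cong\widetilde{M}$. Both embed.

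\emph{Case $A=N$.} Then $A^2=0$ and every subspace is a graded ideal, so $A$ is one-dimensional. It is then $\mathbb{F}E_{12}$ or $\mathbb{F}E_{21}$.

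\emph{Case $N\neq0$ and $s\geq1$.} First show that no $B_i$ is isomorphic to $\widetilde{M}$. Suppose $B_i\cong\widetilde{M}$ with matrix units $e_{jk}$, and suppose $n\in N_g$. Then $e_{21}n$ has degree $e$, so it lies in $N_e=0$; similarly $ne_{12}=0$. Using $e_{11}=e_{12}e_{21}$ and $e_{22}=e_{21}e_{12}$, together with $f_3$ and $f_4$ (for example $e_{12}n\in A_gA_g=0$), one gets $B_iN=NB_i=0$. Then $B_i$ is a nonzero graded ideal meeting $N$ trivially, which contradicts subdirect irreducibility. Hence every $B_i=\mathbb{F}e_i$.

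Next, as in Theorem \ref{13}, $N$ is annihilated by $A_g\cup A_{g^{-1}}$. Every subspace of $N$ invariant under the $e_i$ is a graded ideal. The idempotents act on $N$ as commuting projections, so the one-dimensional graded subspaces $e_iNe_j$ and $e_iN(1-\sum e_k)$ decompose $N$ into graded ideals. Subdirect irreducibility then forces $\dim N=1$, with $N=\mathbb{F}n$ homogeneous of degree $g$ or $g^{-1}$.

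The same argument as in Theorem \ref{13} (extra $B_i$ acting trivially on $N$ would give an ideal disjoint from $N$) reduces to three configurations:
\begin{itemize}
\item $s=1$ with $e_1n=n$ and $ne_1=0$ (or its mirror $e_1n=0$, $ne_1=n$);
\item $s=1$ with $e_1n=ne_1=n$;
\item $s=2$ with $e_1n=ne_2=n$.
\end{itemize}
Map $e_1,e_2$ to $E_{11},E_{22}$, and map $n$ to $E_{12}$ if $\|n\|=g$, or to $E_{21}$ if $\|n\|=g^{-1}$, swapping the roles of the diagonal units if needed.

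\textbf{Main obstacle.} The second configuration is the hard one. If $e_1n=ne_1=n$, the image would have to be a scalar identity plus a nilpotent element, $\lambda I+\mu E_{12}$. But $I$ has degree $e$ and $E_{12}$ has degree $g$, so $n$ would have to be $E_{12}$ while $e_1\mapsto I$. That works: $\lambda e_1+\mu n\mapsto\lambda I+\mu E_{12}$ is a graded monomorphism. So this configuration is also fine.

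I expect the real difficulty to be ruling out $B_i\cong\widetilde{M}$ coexisting with a nonzero $N$. Even if such an $A$ were subdirectly irreducible, it would have to be shown to lie in $\mathrm{var}(\widetilde{M})$. My first attempt is the annihilation argument above, which uses the degree-$e$ vanishing $N_e=0$ together with $f_3$ and $f_4$.
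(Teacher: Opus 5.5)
Your proposal is correct and follows essentially the paper's route: Corollary~\ref{varsdirr}, the decomposition from Theorem~\ref{decompelfin}, $N_e=0$ and $N^2=0$, ruling out $B_i\cong\widetilde{M}$ because elements of $B$ of degree $g^{\pm1}$ annihilate $N$ (your matrix-unit argument is a concrete version of the paper's $S\cap B=0$), and then the same three configurations with the same embeddings into $\widetilde{M}$. Two small corrections are needed: the ordinary identity required by Corollary~\ref{varsdirr} is $f_2$ itself read with $X_i=y_i+z_i+w_i$, which covers every element of $A$ because of the identities $x_{1,h}$ (setting $y_i=w_i=0$ only gives a graded identity in the $z_i$, not an ordinary one); and in your $\dim N=1$ step the Peirce components $e_iNe_j$ need not be one-dimensional, since it is any homogeneous one-dimensional subspace of such a component that is a graded ideal.
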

\begin{proof}
Let $\mathfrak{B}$ denote the variety determined by the polynomials in (\ref{varefin}). Note that $\mathrm{var}(\widetilde{M})\subseteq \mathfrak{B}$. Since the algebras in $\mathfrak{B}$ satisfy $f_2$ and $x_{1,h}$ for all $ h\in G\setminus \{1,g,g^{-1}\}$ it follows from Corollary \ref{varsdirr} that to prove the reverse inclusion it is sufficient to verify that every finite graded subdirectly irreducible algebra in $\mathfrak{B}$ lies in $\mathrm{var}(\widetilde{M})$.



Let $A$ be a finite graded subdirectly irreducible algebra in $\mathfrak{B}$. Theorem \ref{decompelfin} implies that $A=B_{1}\oplus\cdots\oplus B_{r}\dotplus N$, where $B_i$ is isomorphic to $\mathbb{F}$ or to $\widetilde{M}$ for all $i=1,\dots, r$ and $N=J^{gr}(A)$.

We claim that $N^2=0$. Indeed $N=N_{e}\oplus N_{g}\oplus N_{g^{-1}}$. Identity $f_1$ implies that $N_{e}=0$. Hence $N=N_{g}\oplus N_{g^{-1}} $ and $N_{g}N_{g^{-1}},N_{g^{-1}}N_{g}\subseteq N_{e}=0$. Thus identities $f_3$ and $f_4$ imply that $N^{2}=0$ and the claim is proved. As a consequence, if $r=0$, i.\,e., $A=N$ is nilpotent then $A$ has dimension $1$ over $\mathbb{F}$ and it is isomorphic to a subalgebra of $\widetilde{M}$.

Now assume that $N=0$, then $r=1$ and $A$ is isomorphic to $\mathbb{F}$ or $M$, hence it is isomorphic to a graded subalgebra of $M$. 
 
 Finally, assume that $r\geq 1$ and that $N\neq 0$. We know that $N_{e}=0$. Therefore $N\subseteq S:= A_{g}\oplus A_{g^{-1}}$.  


We claim that $S\cap B=0$, where $B=B_{1}\oplus\cdots\oplus B_{r}$. Let $x\in S\cap B$ and let $u\in N$. Then $xu=0=ux$. Hence $I\subseteq B$, where $I$ is the ideal generated by $x$. Therefore $I\cap N=0$. As a consequence $x=0$. Note that 
 \[S\cap(B\dotplus N)=(S\cap B)\oplus N.\] Since $A=B\dotplus N$ and $S\cap B=0$ we conclude that $N= S$. Since $J(A_e)=0$, due to identity $f_1$, we may assume that $A_{e}=B$. Moreover the fact that $f_1$ is an identity for $A_{e}$ and Lemma \ref{nome aleata} imply that $B_{i}\simeq \mathbb{F}$, for all $i=1,\ldots,r$.

If $AN=NA=0$ then $B$ non-zero graded ideal such that $B\cap N=0$, this is a contradiction since $A$ is subdirectly irreducible. Therefore $AN\neq 0$ or $NA\neq0$. We may assume without loss of generality that $AN\neq 0$. Let $e_{i}$ be the unity of $B_{i}$ for $i=1,\dots, r$.

Note that the $e_{i}N$, $i=1,\dots, r$ are graded ideals of $A$, since $A$ is subdirectly irreducible only one of them is non-zero. Hence we may assume that $e_{1}N\neq 0$ and $e_iN=0$ for $i>1$. As in the proof of Theorem \ref{13} we conclude that $N=e_{1}N$. Moreover only one of the ideals $Ne_{j}$ is different from zero, we may assume that $j\leq 2$. There are three cases to consider.

\textbf{Case 1} $NA=0$. We conclude, as in Case 1 of the proof of Theorem \ref{13} that $r=1$ and $\mathrm{dim}_{\mathbb{F}}\,N=1$. Let $n$ be a non-zero element of $N$. Then $\beta=\{e_{1},n\}$ is a basis of $A$ such that $e_{1}n=n$, $ne_{1}=n^{2}=0$ and $e_{1}^{2}=e_{1}$. Hence if $N=N_{g}$, the map
\begin{align*}
	\varphi: A&\longrightarrow \widetilde{M}\\
	\lambda e_{1}+\mu n&\mapsto \varphi(\lambda e_{1}+\mu n)=\begin{pmatrix}
		\lambda & \mu\\
		0 & 0
	\end{pmatrix}.
\end{align*} 
is a monomorphism of graded algebras.
 If $N=N_{g^{-1}}$ then 
\begin{align*}
	\varphi: A&\longrightarrow \widetilde{M}\\
	\lambda e_{1}+\mu n&\mapsto \varphi(\lambda e_{1}+\mu n)=\begin{pmatrix}
		0 & 0\\
		\mu & \lambda
	\end{pmatrix},
\end{align*} 
is a monomorphism of graded algebras.

\textbf{Case 2} $Ne_{1}\neq 0$. Then as in Case 2 of Theorem \ref{13} we have $N=e_{1}Ne_{1}$,  $r=1$ and $\mathrm{dim}_{\mathbb{F}}\, N=1 $. Let $n$ be a non-zero element of $N$. Then $\beta=\{e_{1},n\}$ is a basis of $A$ such that $e_{1}^{2}=e_{1}$, $e_{1}n=ne_{1}=n$ and $n^{2}=0$. Hence if $N=N_g$ the map
\begin{align*}
	\varphi: A&\longrightarrow \widetilde{M}\\
	\lambda e_{1}+\mu n&\mapsto \varphi(\lambda e_{1}+\mu n)=\begin{pmatrix}
		\lambda & \mu\\
		0 & \lambda
	\end{pmatrix}.
\end{align*}
is a monomorphism of graded algebras.
  
If $N=N_{g^{-1}}$ then the map
\begin{align*}
	\varphi: A&\longrightarrow \widetilde{M}\\
	\lambda e_{1}+\mu n&\mapsto \varphi(\lambda e_{1}+\mu n)=\begin{pmatrix}
		\lambda & 0\\
		\mu & \lambda
	\end{pmatrix}.
\end{align*} 
is a monomorphism of graded algebras.

\textbf{Case 3} $Ne_{2}\neq 0$. In this case $N=e_{1}Ne_{2}$. As in Case 3 of Theorem \ref{13} we conclude that $r=2$ and $\mathrm{dim}_{\mathbb{F}}\, N=1$. Let $n$ be a non-zero element of $N$. Then $\beta=\{e_{1},e_{2},n\}$ is a basis of $A$ such that
  $$e_{1}^{2}=e_{1}\mbox{ and }e_{2}^{2}=e_{2}$$
 $$e_{1}e_{2}=e_{2}e_{1}=n^{2}=e_{2}n=ne_{1}=0$$
$$e_{1}n=ne_{2}=n.$$

If $N=N_{g}$ then the map
\begin{align*}
	\varphi: A&\longrightarrow \widetilde{M}\\
	\lambda_{1} e_{1}+\lambda_{2}e_{2}+\mu n&\mapsto \varphi(\lambda_{1} e_{1}+\lambda_{2}e_{2}+\mu n)=\begin{pmatrix}
		\lambda_{1} & \mu\\
		0 & \lambda_{2}
	\end{pmatrix}
\end{align*} 
is a monomorphism of graded algebras.

If $N=N_{g^{-1}}$ then the map
\begin{align*}
	\varphi: A&\longrightarrow \widetilde{M}\\
	\lambda_{1} e_{1}+\lambda_{2}e_{2}+\mu n&\mapsto \varphi(\lambda_{1} e_{1}+\lambda_{2}e_{2}+\mu n)=\begin{pmatrix}
		\lambda_{2} & 0\\
		\mu & \lambda_{1}
	\end{pmatrix}.
\end{align*}
is a monomorphism of graded algebras.
\end{proof}




\subsection{Division $\mathbb{Z}_2$-gradings }

In this section we determine a basis for the graded identities of $M_2(\mathbb{F})$ with the two possible division grading by the group $\mathbb{Z}_2$.

First we assume that $\mathrm{char}\,\mathbb{F}\neq 2$, then the grading for $\widehat{D}=M_2(\mathbb{F})$ that we need to consider is the following:

\begin{equation}\label{D1}
		      \widehat{D}_0 =
			\left\{
			\begin{pmatrix}
				u & v \\
				bv & u
			\end{pmatrix}
			\;\middle|\; u,v \in \mathbb{F}
			\right\},
			\quad
			\widehat{D}_1 =
			\left\{
			\begin{pmatrix}
				u & v \\
				-bv & -u
			\end{pmatrix}
			\;\middle|\; u,v \in \mathbb{F}
			\right\},
\end{equation}
where $b \in \mathbb{F} \setminus \mathbb{F}^2$.	

\begin{notation}
In this subsection we denote by $y_i$ and $z_i$ the indeterminates $x_{i,0}$ and $x_{i,1}$  of $X_{G}$, respectively.
\end{notation}

A basis for the graded identities of $\widehat{D}$ was determined in \cite[Theorem 15]{KA} that we present next.

\begin{Theorem}
Let $\mathbb{F}$ be a field of characteristic different from $2$ with $q$ elements. The ideal of graded identities of the algebra $M_2(\mathbb{F})$ with the grading (\ref{D1}) is generated by the polynomials
\begin{equation*}
\begin{split}
	&y_{1}^{q^{2}}-y_{1}, \mbox{ with } ||y_{1}||=0,\\
	 &z_{1}^{2q-1}-z_{1}, \mbox{ with } ||z_{1}||=1,\\
	 (&X_{1}-X_{1}^{q})(X_{2}-X_{2}^{q^{2}})(1-[X_{1},X_{2}]^{q-1}).
\end{split}
\end{equation*}
     where $X_{i}=z_{i}+y_{i}$ with $||y_{i}||=0$  and  $||z_{i}||=1.$
\end{Theorem}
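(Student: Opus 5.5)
The statement is cited from \cite[Theorem 15]{KA}. I would still prove it by the same scheme used above for the gradings (\ref{El2}) and (\ref{Elfin}).

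\textbf{Setting up.} Let $\mathfrak{D}$ be the variety determined by the three polynomials. First I check that $\widehat{D}$ satisfies them.
\begin{itemize}
\item $\widehat{D}_0\cong\mathbb{F}[\sqrt{b}]\cong\mathbb{F}_{q^2}$, which gives $y^{q^2}=y$.
\item For $z\in\widehat{D}_1$ one has $z^2=(u^2-bv^2)I\in\mathbb{F}I$. Hence $z^{2q-1}=(z^2)^{q-1}z$. This equals $z$ when $z^2\neq 0$, and is trivially true when $z=0$, since $u^2-bv^2=0$ forces $u=v=0$ because $b\notin\mathbb{F}^2$.
\item The last polynomial is the Mal'tsev--Kuz'min identity evaluated at $X_i=y_i+z_i$.
\end{itemize}
Because the third identity has the form required in Corollary \ref{varsdirr}, it suffices to show that every finite graded subdirectly irreducible $A\in\mathfrak{D}$ lies in $\mathrm{var}(\widehat{D})$. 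I will in fact show that each such $A$ embeds in $\widehat{D}$ or in $\widehat{D}\oplus\widehat{D}$, or is otherwise a graded subalgebra of something generating the same variety.

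\textbf{Structure of $A$.} Write $N=J^{gr}(A)$. Then $N_0$ is nil and satisfies $y^{q^2}=y$, so $N_0=0$. Similarly, each $z\in N_1$ is nilpotent and satisfies $z=z^{2q-1}$, so $N_1=0$. Thus $N=0$, and by Theorem \ref{decomposição estrutural de álgebras semi primas graduadas} together with subdirect irreducibility, $A=M_n(D)$ is graded simple. This step is much cleaner than the previous subsections, because no nilpotent part survives.

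\textbf{Classifying the graded simple algebras.} Let $A=M_n(D)(\gamma)$.
\begin{itemize}
\item $D_0$ is a finite division algebra satisfying $y^{q^2}=y$, so $D_0\in\{\mathbb{F},\mathbb{F}_{q^2}\}$.
\item If the grading were elementary with $n\ge2$, then $E_{12}$ is homogeneous and nilpotent. If it lies in degree $0$ this contradicts the first identity; if in degree $1$, it contradicts the second. So either $n=1$, or $D$ has full support $\mathbb{Z}_2$. In the latter case $\gamma$ can be normalized to be trivial, and then $A_0=M_n(D_0)$ must satisfy $y^{q^2}=y$, forcing $n=1$.
\item So in all cases $A=D$ is a graded division algebra with $D_0\in\{\mathbb{F},\mathbb{F}_{q^2}\}$.
\item If $D_1=0$, then $D\cong\mathbb{F}$ or $\mathbb{F}_{q^2}$, and both embed in $\widehat{D}_0$.
\item If $D_1\neq0$, pick $u\in D_1\setminus\{0\}$. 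Then $D=D_0\oplus D_0u$, the element $u^2$ lies in $D_0$, and conjugation by $u$ is an automorphism $\sigma$ of $D_0$ of order at most $2$.
\item The identity $z^{2q-1}=z$ applied to $z=\lambda u$ gives $(\lambda u)^{2(q-1)}=1$ for all $\lambda\in D_0^\times$.
\end{itemize}

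\textbf{Main obstacle: the case $D_1\neq0$.} The subcases are as follows.
\begin{itemize}
\item If $D_0=\mathbb{F}$ and $u^2=c$:
  \begin{itemize}
  \item if $c\notin\mathbb{F}^2$, then $D\cong\mathbb{F}_{q^2}$ with the grading whose degree-$0$ part is $\mathbb{F}$ and degree-$1$ part is $\mathbb{F}\sqrt{c}$, which embeds into $\widehat{D}$ as $\mathbb{F}I\oplus\mathbb{F}\,\mathrm{diag}(1,-1)\cdot$(a suitable element);
  \item if $c$ is a square, then $D\cong\mathbb{F}[\mathbb{Z}_2]\cong\mathbb{F}\oplus\mathbb{F}$, which embeds in $\widehat{D}$ via $\mathbb{F}I+\mathbb{F}\,\mathrm{diag}(1,-1)$.
  \end{itemize}
\item If $D_0=\mathbb{F}_{q^2}$ and $\sigma$ is nontrivial, then $D$ is a quaternion-type algebra over $\mathbb{F}$. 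By Wedderburn this is $M_2(\mathbb{F})$ with a division grading, hence isomorphic to $\widehat{D}$ by Theorem \ref{teorema do M2KGRADINS}.
\item If $D_0=\mathbb{F}_{q^2}$ and $\sigma$ is trivial, then $D$ is commutative, so $D\cong\mathbb{F}_{q^2}[t]/(t^2-c)$. Here the condition $(\lambda u)^{2(q-1)}=1$ for all $\lambda$ must be used, and I expect this to be the delicate step:
  \begin{itemize}
  \item it says $\lambda^{2(q-1)}c^{q-1}=1$ for all $\lambda\in\mathbb{F}_{q^2}^\times$;
  \item taking $\lambda$ a generator, this would require $q^2-1\mid 2(q-1)$, which is false for $q>1$ odd (that is, for $q\ge3$);
  \item so this case is excluded.
  \end{itemize}
\end{itemize}
Hence every graded simple $A\in\mathfrak{D}$ is a graded subalgebra of $\widehat{D}$, which gives $\mathfrak{D}=\mathrm{var}(\widehat{D})$.
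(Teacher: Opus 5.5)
Your argument is correct. The paper gives no proof of this statement (it is quoted from Koshlukov--Azevedo). Your route is, however, exactly the scheme the paper uses for the characteristic-$2$ division grading (\ref{D2}):
\begin{enumerate}
\item reduce via Corollary~\ref{varsdirr} to finite graded subdirectly irreducible algebras;
\item kill $J^{gr}(A)$ with the power identities;
\item reduce to a graded division algebra;
\item embed that algebra in $\widehat{D}$.
\end{enumerate}

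The differences are forced by the basis. In characteristic $2$ the identity $z_1y_1-y_1^qz_1$ pins down the conjugation action of $D_1$ on $D_0$, and every scalar is a square. Here you must instead exclude the commutative case $D_0\cong\mathbb{F}_{q^2}$, $\sigma=\mathrm{id}$ using only $z^{2q-1}=z$. Your computation $q^2-1\nmid 2(q-1)$ does this, once you note that $\lambda=1$ first gives $c^{q-1}=1$. You also have to keep track of square classes.

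The one step you leave unjustified is the ``suitable element''. What you need is that the norm $\mathbb{F}_{q^2}^\times\to\mathbb{F}^\times$, $\ell\mapsto\ell^{q+1}$, is surjective. Equivalently, $u^2-bv^2$ represents every nonzero scalar, so some $P\in\widehat{D}_1$ satisfies $P^2=cI$. Indeed $(\mathrm{diag}(1,-1)\ell)^2=\bar\ell\ell$ for $\ell\in\widehat{D}_0$.

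The same fact simplifies the non-commutative case. There $c=u^2$ is $\sigma$-fixed, hence lies in $\mathbb{F}$. Replace $u$ by $\ell u$ with $N(\ell)=c^{-1}$ to get $u^2=1$, then write down the graded isomorphism $D\cong\widehat{D}$ directly, sending $u\mapsto\mathrm{diag}(1,-1)$, as the paper does with $U\mapsto P$ in characteristic $2$. This avoids going through Wedderburn and Theorem~\ref{teorema do M2KGRADINS}. If you keep the latter route, add that over a finite field all nonsquares $b$ give isomorphic gradings of type (4), since conjugation by $\mathrm{diag}(1,\mu)$ replaces $b$ by $\mu^2 b$.
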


\vspace{0,2cm}
\noindent{\bf Convention:} Until the end of this subsection we assume that $\mathrm{char}\,\mathbb{F}=2$.
\vspace{0,2cm}

When $\mathrm{char}\,\mathbb{F}=2$, the grading for $\widetilde{D}=M_2(\mathbb{F})$ that we need to consider is the following:

\begin{equation}\label{D2}
    \begin{split}
		     \widetilde{D}_0 &=
		\left\{
		\begin{pmatrix}
			u & u+v \\
			b(u+v) & v
		\end{pmatrix}
		\;\middle|\; u,v \in \mathbb{F}
		\right\},\\        
		\widetilde{D}_1 &=
		\left\{
		\begin{pmatrix}
			bu+v & u \\
			v & bu+v
		\end{pmatrix}
		\;\middle|\; u,v \in \mathbb{F}
		\right\},
    \end{split}
\end{equation}
where $b \in \mathbb{F} \setminus \{ x^2 + x \mid x \in \mathbb{F} \}$.

Note that $\mathbb{L}:=\widetilde{D}_{e}$ is a field with $q^{2}$ elements. Let $P=\begin{pmatrix}
	1& 0\\
	1& 1
	\end{pmatrix}\in \widetilde{D}_{1}$. Note that $P^{2}=1\in\mathbb{F}$. The map $\ell\mapsto P\ell P$ is a non-trivial automorphism of $\mathbb{L}$. Therefore 
    \begin{equation}\label{Pell}
    P\ell =\ell^{q}P, 
    \end{equation}
    for all $\ell \in\mathbb{L}$.   
    
\begin{Theorem}
Let $\mathbb{F}$ be a field of characteristic $2$ with $q$ elements. The ideal of graded identities of the algebra $M_2(\mathbb{F})$ with the grading (\ref{D2}) is generated by the polynomials:
\begin{equation}\label{basisd2}
    \begin{split}
	f_{1}&=y_1^{q^{2}}-y_1\\
	f_{2}&=z_1^{2q^{2}-1}-z_1\\
	f_{3}&=z_1y_1-y_1^{q}z_1\\
	f_4&=(X_{1}-X_{1}^{q})(X_{2}-X_{2}^{q^{2}})(1-[X_{1},X_{2}]^{q-1})=0, 
    \end{split}     
    \end{equation}
    where $X_{i}=z_{i}+y_{i}$ with $||y_{i}||=0$  and $||z_{i}||=1.$
\end{Theorem}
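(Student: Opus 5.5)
We follow the scheme used for the gradings (\ref{El2}) and (\ref{Elfin}). Let $\mathfrak{D}$ be the variety determined by $f_1,\dots,f_4$ in (\ref{basisd2}). First we check that $\widetilde{D}$ satisfies these polynomials. Since $\widetilde{D}_0=\mathbb{L}$ has $q^2$ elements, $f_1$ holds. Every nonzero $z\in\widetilde{D}_1$ has the form $\ell P$ with $\ell\in\mathbb{L}$, and (\ref{Pell}) gives $z^2=\ell\ell^qP^2=N(\ell)\in\mathbb{F}$. Hence $z^{2q^2-1}=z\,(z^2)^{q^2-1}=z$, which is $f_2$. Relation (\ref{Pell}) also gives $f_3$, and $f_4$ holds because it is an ordinary identity of $M_2(\mathbb{F})$ by \cite{MK}. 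Thus $\mathrm{var}(\widetilde{D})\subseteq\mathfrak{D}$. Since $f_4$ has the form required in Corollary \ref{varsdirr}, it is enough to show that every finite graded subdirectly irreducible algebra $A\in\mathfrak{D}$ lies in $\mathrm{var}(\widetilde{D})$.

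\textbf{Structure step.} We first classify the graded simple algebras $C=M_n(D)$ in $\mathfrak{D}$, in the spirit of Lemma \ref{lema da variedade}.
\begin{itemize}
\item By $f_1$ and Wedderburn's theorem, $D_0$ is a subfield of $\mathbb{L}$, hence is $\mathbb{F}$ or a field with $q^2$ elements.
\item Suppose $n\ge2$. An element of the form $E_{12}$ or $dE_{12}$ that lies in degree $0$ is nilpotent, so $f_1$ forces it to be zero. An element in degree $1$ is nilpotent, so $f_2$ forces it to be zero as well. This contradiction shows $n=1$, so $C=D$ is a graded division algebra.
\item If $D$ is trivially graded, then $D$ is $\mathbb{F}$ or $\mathbb{F}_{q^2}$.
\item Otherwise pick $u\in D_1$ nonzero. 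Then $u^2\in D_0$ is central in $D$, and $f_3$ says that conjugation by $u$ acts on $D_0$ as $y\mapsto y^q$.
\item If $D_0=\mathbb{F}$, then $D=\mathbb{F}\oplus\mathbb{F}u$ is commutative. Since $\mathrm{char}\,\mathbb{F}=2$, $u^2$ is a square, so $D\cong\mathbb{F}[\mathbb{Z}_2]$.
\item If $D_0\cong\mathbb{F}_{q^2}$, then $D$ is a crossed product of $\mathbb{F}_{q^2}$ by its Frobenius. It is therefore either $M_2(\mathbb{F})\cong\widetilde{D}$ or the graded algebra $\mathbb{F}_{q^2}\oplus\mathbb{F}_{q^2}u$ with $u^2\in\mathbb{F}$; since Brauer groups of finite fields are trivial, both cases occur only up to the choice $u^2=N(\ell)$, which gives $\widetilde{D}$.
\end{itemize}
So the graded simple algebras in $\mathfrak{D}$ are $\mathbb{F}$, $\mathbb{F}_{q^2}$, $\mathbb{F}[\mathbb{Z}_2]$ and $\widetilde{D}$. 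Each of them is isomorphic to a graded subalgebra of $\widetilde{D}$:
\begin{itemize}
\item $\mathbb{F}_{q^2}=\mathbb{L}$ and $\mathbb{F}=\mathbb{F}I$;
\item $\mathbb{F}[\mathbb{Z}_2]\cong\mathbb{F}\oplus\mathbb{F}P$.
\end{itemize}
Next we need a Wedderburn--Malcev splitting $A=B\dotplus J^{gr}(A)$ in the style of Lemma \ref{decomposição COM unidade} and Theorem \ref{teorema do levantamento}. For this we first check that the unitization $A^\sharp$ still satisfies $f_1,\dots,f_4$. For $f_3$ use $z(\lambda+y)=(\lambda+y)^qz$ together with $\lambda^q=\lambda$. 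For $f_1$, the component $\mathbb{F}\oplus A_0$ satisfies $(\lambda+y)^{q^2}=\lambda+y^{q^2}$ in characteristic $2$.

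To lift the simple components:
\begin{itemize}
\item $\mathbb{L}$ is generated by a single element $\ell$ with minimal polynomial $m$. We lift $\ell$ to $a\in e_tA_0e_t$. The element $m(a)$ lies in $J_0$ and is nilpotent, and $f_1$ applied to $e_tA_0e_t$ forces it to be zero.
\item The generator $u$ of degree $1$ is lifted similarly. Here $u^2-c$, for the appropriate $c$, lies in $J_0$, so it is nilpotent and hence zero by $f_1$.
\item The relation $u\ell=\ell^qu$ is preserved automatically by $f_3$.
\end{itemize}

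\textbf{Radical step.} Put $N=J^{gr}(A)$.
\begin{itemize}
\item $N_0$ is nilpotent and satisfies $f_1$, so $N_0=0$.
\item Every element of $N_1$ is nilpotent and satisfies $f_2$, so $N_1=0$ as well.
\item Hence $N=0$, and $A$ is semisimple.
\end{itemize}
Subdirect irreducibility then forces $A$ to be graded simple. By the structure step, $A$ embeds in $\widetilde{D}$, so $A\in\mathrm{var}(\widetilde{D})$. This proves the reverse inclusion $\mathfrak{D}\subseteq\mathrm{var}(\widetilde{D})$.

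\textbf{Main obstacle.} The hard part is the structure step for graded division algebras with $D_0\cong\mathbb{F}_{q^2}$. We must ensure that no graded division algebra other than $\widetilde{D}$ occurs. Concretely, this means checking that for $D=\mathbb{F}_{q^2}\oplus\mathbb{F}_{q^2}u$ with $u^2=c\in\mathbb{F}^\times$, every $c$ gives the same algebra. This holds because the norm $\mathbb{F}_{q^2}\to\mathbb{F}$ is surjective: replacing $u$ by $\ell u$ changes $c$ by $N(\ell)$. We also need care in the lifting step when $A$ is not unital, using the unitization argument of Theorem \ref{teorema do levantamento}.
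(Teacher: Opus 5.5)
Your proof is correct and follows essentially the same route as the paper: $f_1$ and $f_2$ kill $J^{gr}(A)$ and every homogeneous nilpotent element, so a finite graded subdirectly irreducible algebra in the variety is a graded division algebra, which $f_3$ together with a normalization $u^2=1$ embeds into $\widetilde{D}$. Your Wedderburn--Malcev splitting and lifting step is redundant, since your radical step already gives $J^{gr}(A)=0$ and the paper skips it; and where you normalize $u^2$ via surjectivity of the norm, the paper simply uses that every element of $\mathbb{F}$ is a square in characteristic $2$.
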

\begin{proof}
    We have $\widetilde{D}_{0}=\mathbb{L}$ and $\widetilde{D}_{1}=\mathbb{L}P$. It follows from the fact that $\mathbb{L}$ is a field with $q^2$ elements and from (\ref{Pell}) that the polynomials $f_1$, $f_2$ and $f_3$ in (\ref{basisd2}) are graded identities for $\widetilde{D}=M_2(\mathbb{F})$, with the grading (\ref{D2}). Note that since the underlying algebra is $M_2(\mathbb{F})$ the polynomial $f_4$ is an identity for $\widetilde{D}$. Therefore $\mathrm{Var}(\widetilde{D})\subseteq \mathfrak{B}$, where $\mathfrak{B}$ is the variety determined by the polynomials in (\ref{basisd2}). 
    The algebras in $\mathfrak{B}$ satisfy $f_4$, therefore Corollary \ref{varsdirr} implies that to prove the reverse inclusion it is sufficient to prove that every finite graded subdirectly irreducible algebra in $\mathfrak{B}$ also lies in $\mathrm{Var}(\widetilde{D})$.
	Let $A$ be a finite graded subdirectly irreducible algebra in $\mathfrak{B}$. Since $J^{gr}(A)$ is a nilpotent algebra in $\mathfrak{B}$ we conclude that $J^{gr}(A)=0$. Hence Theorem \ref{decomposição estrutural de álgebras semi primas graduadas} together with the fact that $A$ is subdirectly irreducible implies that $A$ is a graded simple algebra.
	Moreover, identities $f_{1}$ and $f_{2}$ imply that $A$ is a graded division algebra. Let $A=D$. Identity $f_1$ implies that $D_e$ is a field extension of $\mathbb{F}$ of degree at most $2$. If $\mathrm{supp}\, D=\{0\}$ then $D$ is a field and $D=\mathbb{F}$ or $D\cong \mathbb{L}$. In either case $D$ is isomorphic to a subalgebra of $\widetilde{D}$ and therefore lies in $\mathrm{var}(\widetilde{D})$. Now assume that $\mathrm{supp}\, D=\mathbb{Z}_2$. Let $T\in D_{1}\setminus \{0\}$. Identity $f_{3}=z_1y_1-y_1^{q}z_1$ implies that
	\begin{align*}
		T^{3}=T(T^{2})=(T^{2})^{q}T=T^{2q+1}\Longrightarrow T^{3}T^{-1}=T^{2q+1}T^{-1}\Longrightarrow T^{2}=(T^{2})^{q}.
	\end{align*}
	Hence $T^{2}\in\mathbb{F}$. Since $\mathbb{F}$ is a finite field of characteristic $2$ there exists $\lambda\in\mathbb{F}$ such that $T^{2}=\lambda^{2}$. Let $U=\frac{1}{\lambda}T$, then $U^2=1$ and $D=D_{e}\oplus D_{e}U$. The same argument used to prove (\ref{Pell}) implies that $Ux=x^qU$ for all $x \in \mathbb{L}$. The map $\lambda +\mu U\mapsto \psi(\lambda)+\psi(\mu)P$, where $\psi: D_{e}\rightarrow \mathbb{L}$ is any embedding, is a monomorphism from $D$ to $\widetilde{D}$. Therefore $D\in \mathrm{var}(\widetilde{D})$.
\end{proof}

\subsection{Division $\mathbb{Z}_2\times \mathbb{Z}_2$-grading}
Let $\mathbb{F}$ be a field with $q$ elements with $\mathrm{char}\, \mathbb{F}\neq 2$. We determine a basis for the algebra $E=M_2(\mathbb{F})$ with the following grading:
\begin{equation}\label{E}			
			E_{(0,0)} = \mathbb{F} I_2, \quad E_{(1,0)} = \mathbb{F}X, \quad E_{(0,1)} = \mathbb{F}Y, \quad E_{(1,1)} = \mathbb{F}XY,			
\end{equation}
where $X,Y$ are invertible matrices such that			
\[X^2, Y^2 \in \mathbb{F} I_2
\quad \text{and} \quad
XY = -YX.
\]

\begin{notation}
Henceforth we denote $G=\mathbb{Z}_2\times \mathbb{Z}_2$ with the multiplicative notation, $a=(1,0)$ and $b=(0,1)$. Moreover, let $\beta:G\times G\rightarrow \mathbb{F}^{\times}$ be the alternating bicharacter such that $\beta(a,b)=-1$.
\end{notation}

\begin{lemma}\label{20}
Let $g\in G$. There exists $s_g\in \{1,-1\}$ such that 
\begin{equation}\label{id1}
x_{1,g}^q-s_gx_{1,g}
\end{equation}
is a graded polynomial identity for $E$.
\end{lemma}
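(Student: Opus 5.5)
Each homogeneous component of $E$ is one-dimensional, so I will check the identity on a single spanning element per degree. For $g=e$ we have $E_e=\mathbb{F}I_2$ and $(\lambda I_2)^q=\lambda^q I_2=\lambda I_2$ for every $\lambda\in\mathbb{F}$; hence $s_e=1$ works. For $g\neq e$ let $T_g$ denote the spanning element of $E_g$, namely $X$, $Y$ or $XY$. Then $E_g=\mathbb{F}T_g$, and every element of $E_g$ has the form $\lambda T_g$ with $\lambda\in\mathbb{F}$.

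The key observation is that $T_g^2=\mu_g I_2$ with $\mu_g\in\mathbb{F}^{\times}$. For $X$ and $Y$ this holds by hypothesis, and invertibility gives $\mu_g\neq 0$. For $XY$ we compute $(XY)^2=XYXY=-X^2Y^2$, which is again a nonzero scalar. Since $\mathrm{char}\,\mathbb{F}\neq 2$, the number $q$ is odd. Write $q=2k+1$. Then
\[
(\lambda T_g)^q=\lambda^q T_g^{2k+1}=\lambda \mu_g^{k} T_g .
\]
Moreover $\mu_g^{k}=\mu_g^{(q-1)/2}\in\{1,-1\}$, because $\mu_g^{q-1}=1$ (Euler's criterion). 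Setting $s_g:=\mu_g^{(q-1)/2}$ therefore gives $(\lambda T_g)^q=s_g\lambda T_g$ for every $\lambda\in\mathbb{F}$. This shows that $x_{1,g}^q-s_gx_{1,g}$ vanishes on $E_g$, as required.

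The only point needing care is to confirm that $s_g$ depends only on $g$ and not on the chosen element of $E_g$. This holds because $\lambda^q=\lambda$ absorbs the scalar. It also helps to note that $s_g=1$ exactly when $\mu_g$ is a square in $\mathbb{F}$, which may be useful later in the paper. There is no real obstacle beyond this.
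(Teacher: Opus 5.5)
Your proof is correct and follows essentially the same route as the paper: write an element of $E_g$ as $\lambda P$ with $P^2=\omega_P\in\mathbb{F}^{\times}$, so $(\lambda P)^q=\omega_P^{(q-1)/2}\lambda P$, and $s_g=\omega_P^{(q-1)/2}$ squares to $1$. Your explicit check that $(XY)^2=-X^2Y^2$ is a nonzero scalar fills in a detail the paper leaves implicit. One small naming point: the fact $\mu_g^{q-1}=1$ is just Lagrange's theorem in $\mathbb{F}^{\times}$, not Euler's criterion, though this does not affect the argument.
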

\begin{proof}
Let $P\in E_g\setminus \{0\}$ and let $\lambda$ be an element of $\mathbb{F}$. Note that $\omega_P=P^2\in \mathbb{F}$. We have
$$(\lambda P)^{q}=\lambda^qP^{q-1}P=\lambda(P^{2})^{\frac{q-1}{2}}P=\omega_{P}^{\frac{q-1}{2}}(\lambda P).$$
Let $s_g=\omega_{P}^{\frac{q-1}{2}}$. Note that $\omega_P\neq 0$, therefore \[s_g^2=\omega_{P}^{q-1}=1.\] As a consequence $s_g\in \{1,-1\}$ and $x_{1,g}^q-s_gx_{1,g}$ is a graded polynomial identity for $E$.
\end{proof} 

\begin{remark}\label{21}
Given $g,h\in G$ it is clear that 
\begin{equation}\label{id2}
f(x_{1,g},x_{2,h})=x_{1,g}x_{2,h}-\beta(g,h) x_{2,h}x_{1,g}
\end{equation} is a graded polynomial identity for $E$.
\end{remark}
 
 \begin{notation}
 Let $n$ be a positive integer and write $n=a_{0}+a_{1}q+\cdots+a_{m}q^{m} $, where $0\leq a_{i}<q$. We define $n_{1}=a_{0}+a_{1}+\cdots+a_{m}$. If $0\leq n_{1}<q$, we set $l(n)=n_{1}$, otherwise, we compute $(n_{1})_{1}$ from $n_{1}$ and repeat the process until the result $l(n)$ satisfies $0\leq l(n) \leq q-1$.
 \end{notation}

 \begin{notation}
 We denote by $I$ the $T_G$-ideal generated by the polynomials in (\ref{id1}) and (\ref{id2}). Moreover $R=\mathbb{F}\langle X_G\rangle/I$ denotes the relatively free algebra. We denote by $x_{i,g}$ the images of the indeterminates in $R$.
 \end{notation}
 
 \begin{remark}\label{multihom}
 	If $x\in E_g$ then the identities in (\ref{id1}) imply that $x^n=\lambda_x x^{l(n)}$, for some $\lambda_x\in \{1, -1\}$. Therefore if $p$ is a multihomogeneous polynomial in the indeterminates $x_{1, g_1},\ldots,x_{n, g_n}$ of multidegree $(t_{1},\ldots,t_{n})$, then the identities (\ref{id1}) and (\ref{id2}) for $R$ imply that $p\equiv_{I} \lambda x_{1, g_1}^{l(t_{1})}\cdots x_{n, g_n}^{l(t_{n})}$, for some $\lambda\in\mathbb{F}$.
 \end{remark}
 
 

\begin{Theorem}
	Let $\mathbb{F}$ be a field of characteristic different from $2$ with $q$ elements. The ideal of graded identities of the algebra $E=M_2(\mathbb{F})$ with the grading (\ref{E}) is generated by the polynomials in (\ref{id1}) and (\ref{id2}). Moreover if $\leq$ is a total order on $X_G$ then a basis, as a vector space, for the relatively free algebra $\mathbb{F}\langle X_G\rangle/Id_G(E)$ consists of the images of the monomials in the set $$\mathcal{M}=\{M=x_{i_1,g_{i_1}}\cdots x_{i_m,g_{i_m}}\},$$ where $x_{i_1,g_{i_1}} \leq \cdots \leq x_{i_m,g_{i_m}}$, $0\leq \mathrm{deg}_{x_{i,g}}\, M<q$ for all $i\geq 1$ and all $g\in G$.
\end{Theorem}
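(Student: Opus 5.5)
The plan is to compare dimensions of the homogeneous components of the relatively free algebras $R=\mathbb{F}\langle X_G\rangle/I$ and $\mathbb{F}\langle X_G\rangle/Id_G(E)$. Since $I\subseteq Id_G(E)$ by Lemma \ref{20} and Remark \ref{21}, there is a natural surjection $R\to \mathbb{F}\langle X_G\rangle/Id_G(E)$. We will show that the images of the monomials in $\mathcal{M}$ span $R$ and that their images in $\mathbb{F}\langle X_G\rangle/Id_G(E)$ are linearly independent. This gives both that $I=Id_G(E)$ and that $\mathcal{M}$ yields a basis.

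\textbf{Spanning.} Any monomial in the $x_{i,g}$ can be reordered by (\ref{id2}), picking up a sign $\beta(g,h)=\pm1$ at each transposition, so it is congruent modulo $I$ to $\pm$ the ordered monomial. Next, (\ref{id1}) lets us replace $x_{i,g}^q$ by $s_gx_{i,g}$. Iterating reduces each exponent $t$ to an exponent in $\{1,\dots,q-1\}$ when $t\geq 1$; this is the content of Remark \ref{multihom}. One has to be careful that the reduction $x^q\to \pm x$ lowers the degree by $q-1$, so the reduced exponent is congruent to $t$ modulo $q-1$ and lies in $[1,q-1]$. Hence every monomial is $\pm$ an element of $\mathcal{M}$ modulo $I$.

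\textbf{Linear independence in $Id_G(E)$.} This is the main step. Suppose $f=\sum_{M\in\mathcal{M}}\alpha_M M$ is a graded identity of $E$ involving the variables $x_{1,g_1},\dots,x_{n,g_n}$. Choose a nonzero $P_g\in E_g$ for each $g$, with $P_e=I_2$. Substitute $x_{i,g_i}\mapsto \lambda_iP_{g_i}$ with $\lambda_i\in\mathbb{F}$. Each ordered monomial $M$ with exponents $(t_1,\dots,t_n)$ evaluates to $\lambda_1^{t_1}\cdots\lambda_n^{t_n}$ times a fixed product $P_{g_1}^{t_1}\cdots P_{g_n}^{t_n}$. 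Because $E$ is a graded division algebra, this product is a nonzero element of the one-dimensional component $E_{g_1^{t_1}\cdots g_n^{t_n}}$.

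Group the monomials according to their $G$-degree. The components $E_h$ are linearly independent, so the identity splits: for each $h\in G$, the sum $\sum \alpha_M c_M\lambda^{t(M)}=0$ over the $M$ of degree $h$ holds for all $\lambda\in\mathbb{F}^n$, where the $c_M\in\mathbb{F}^\times$ are fixed scalars. Since all exponents satisfy $0\le t_i\le q-1$, distinct monomials give distinct reduced functions $\mathbb{F}^n\to\mathbb{F}$. A polynomial of degree less than $q$ in each variable vanishing on all of $\mathbb{F}^n$ is zero, so $\alpha_Mc_M=0$, and hence $\alpha_M=0$ for every $M$.

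Finally, the kernel of the surjection $R\to\mathbb{F}\langle X_G\rangle/Id_G(E)$ sends a spanning set to a linearly independent set, so the map is injective and $I=Id_G(E)$. The main obstacle I expect is bookkeeping in the spanning step. This includes checking that the exponent reduction is as claimed (the notation $l(n)$ in the paper suggests a digit-sum reduction, which is congruent modulo $q-1$, so it gives the same result). It also includes ensuring that monomials of the variables of degree $e$, where $s_e=1$, behave like ordinary commuting variables with $y^q=y$.
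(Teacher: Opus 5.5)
Your proposal is correct and follows essentially the same route as the paper: first, $\mathcal{M}$ spans modulo $I$ via reordering and exponent reduction (Remark~\ref{multihom}); second, a linear combination of elements of $\mathcal{M}$ that is an identity must be trivial, because $E$ has a division grading. The only difference is in how the identity is split into multihomogeneous components. The paper cites a result of Drensky--Formanek for identities of degree less than $q$ in each variable, whereas you prove this directly by substituting $\lambda_iP_{g_i}$ and using that a polynomial of degree less than $q$ in each variable that vanishes on $\mathbb{F}^n$ is zero.
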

\begin{proof}
	Lemma \ref{20} and Remark \ref{21} imply that $I\subseteq Id_G(E)$. Next we prove the reverse inclusion. Let $p\in Id_G(E)$. Remark \ref{multihom} implies that there exist $M_1,\dots, M_n\in \mathcal{M}$ and $\lambda_1,\dots, \lambda_n\in \mathbb{F}$ such that 
    \begin{equation}\label{p}
    p\equiv_I \sum_i\lambda_iM_i.
    \end{equation}
    Since $p\in Id_G(E)$ and $I\subseteq Id_G(E)$, we conclude that $\sum_i\lambda_iM_i$ is a graded identity for $E$. Then \cite[Proposition 1.2.8]{DF} implies that $\lambda_iM_i\in Id_G(E)$ for all $i=1,\dots, n$. Since $E$ has a division grading this implies that $\lambda_i=0$ for all $i=1,\dots, n$. Then (\ref{p}) implies that $p\in I$. Therefore we conclude that $Id_G(E)\subseteq I$. The last assertions follow directly from the equality $Id_G(E)= I$ and the preceding arguments.
\end{proof}

\section*{Acknowledgements}
This study was financed in part by the Coordena\c{c}\~ao de Aperfei\c{c}oamento de Pessoal de N\'ivel Superior - Brasil (CAPES) - Finance Code 001. D.~Diniz acknowledges the support of the Brazilian National Council for Scientific and Technological Development (CNPq) through grant No.~304328/2022-7. 

The authors would like to thank the referees for their valuable comments and suggestions, which helped improve the presentation of the paper.

\end{document}